\def\R{\mathbb R}
\def\Z{\mathbb Z}
\def\Q{\mathbb Q}
\def\C{\mathbb C}
\def\HH{\mathcal H}
\def\AA{\mathcal A}
\def\la {{\lambda}}
\newcommand {\nc}   {\newcommand}
\nc {\be}   {\begin{equation}} \nc {\ee}   {\end{equation}} \nc
\nc {\eeq}  {\end{eqnarray}} \nc {\beqs}
\nc {\eeqs} {\end{eqnarray*}}
\def\edc{\end{document}}
\newtheorem{theoreme}{Theorem}[section]
\newtheorem{pro}[theoreme]{Proposition}
\newtheorem{lemma}[theoreme]{Lemma}
\theoremstyle{definition}
\newtheorem{rem}[theoreme]{Remark}
\numberwithin{equation}{section}
\begin{document}
\title[The  influence of the coefficients on the stability of coupled  wave equations]{The influence of the coefficients of a system of wave equations coupled by velocities on its stabilization}
\author{Mohammad AKIL}
\address{Universit\'e Libanaise\\
KALMA Laboratory, Equipe EDP-AN\\
Hadath, Beyrouth, Liban}
\email{mohamadakil1@hotmail.com}
\author{Mouhammad GHADER}
\address{Universit\'e Libanaise\\
KALMA Laboratory, Equipe EDP-AN\\
Hadath, Beyrouth, Liban}
\email{mhammadghader@hotmail.com}
\author{Ali Wehbe}
\address{Universit\'e Libanaise\\
Facult\'e des Sciences 1\\
EDST, Equipe EDP-AN\\
Hadath, Beyrouth, Liban}
\email{ali.wehbe@ul.edu.lb}

\date{\today}
\keywords{Coupled wave equations, fractional boundary damping, strong stability, nonuniform stability, polynomial stability, frequency domain approach.}
\begin{abstract}
In this work, we consider a system of two wave equations coupled by velocities in one-dimensional space, with one boundary fractional damping. First, we show that the system is strongly asymptotically stable if and only if the coupling parameter $b$ of the two equations is outside a discrete set of exceptional real values. Next, we show that our system is not uniformly stable. Hence, we look for a polynomial decay rate for smooth initial data. Using frequency domain approach combining with multiplier method, we prove that the energy decay rate is greatly influenced by the nature of the coupling parameter $b$, the arithmetic property of the ratio of the wave propagation speeds $a$, the order of the fractional damping $\alpha$. Indeed, under the equal speed propagation condition {\it i.e} $a=1$, we establish an optimal polynomial energy decay rate of type $t^{-\frac{2}{{1-\alpha}}}$ if the coupling parameter $b\notin \pi \Z$ and of type $t^{-\frac{2}{{5-\alpha}}}$ if the coupling parameter $b\in \pi \Z$. Furthermore, when the wave propagate with different  speeds {\it i.e} $a\not=1$, we prove that, for any rational number $\sqrt{a}$ and almost all irrational number $\sqrt{a}$, the energy of our system decays  polynomially to zero like as $t^{-\frac{2}{{5-\alpha}}}$. This result still beholds if $a\in \Q$, $\sqrt{a}\notin \Q$ and $b$ small enough.
\end{abstract}
\maketitle
\section{Introduction}\label{Section-1}
\setcounter{equation}{0}
\noindent \subsection{The problem} In this work, we investigate  the energy decay rate of a coupled wave equations with only one fractional dissipation law. The system is described by
\begin{eqnarray}
u_{tt}-u_{xx}+by_t&=&0,\ \ (x,t)\in (0,1)\times \mathbb{R}_+^\star,\label{eq-1.1}\\ \noalign{\medskip}
y_{tt}-ay_{xx}-bu_t&=&0,\ \ (x,t)\in (0,1)\times \mathbb{R}_+^\star,\label{eq-1.2}
\end{eqnarray}
with the following boundary conditions
\begin{eqnarray}
u(0,t)=y(0,t)=y(1,t)=0,\ \ t\in \mathbb{R}_+^\star,\label{eq-1.3}\\ \noalign{\medskip}
u_x(1,t)+\gamma\partial^{\alpha,\eta}_tu(1,t)=0,\ \ t\in \mathbb{R}_+^\star,\label{eq-1.4}
\end{eqnarray}
and the following initial conditions
\begin{eqnarray}
u(x,0)=u_0(x),& u_t(x,0)=u_1(x),&x\in    (0,1),\label{eq-1.5}\\
y(x,0)=y_0(x),& y_t(x,0)=y_1(x),&x\in (0,1),\label{eq-1.6}
\end{eqnarray}
where $\eta \geq 0$, $\alpha\in ]0,1[$, $a>0$ and $b\in \R^{\ast}$ are constants. Fractional calculus includes various extensions of the usual definition of derivative from integer to real order, including the Riemann-Liouville derivative, the Caputo derivative, the Riesz derivative, the Weyl derivative, etc. In this paper, we consider the Caputo's fractional derivative $\partial_t^{\alpha,\eta}$ of order $\alpha\in ]0,1[$ with respect to time variable $t$  defined by
\begin{equation}\label{eq-1.7}
\left[D^{\alpha,\eta}\omega \right](t) =\partial_t^{\alpha,\eta}\omega(t)=\frac{1}{\Gamma(1-\alpha)}\int_0^t(t-s)^{-\alpha}e^{-\eta(t-s)}\frac{d\omega}{ds}(s)ds.
\end{equation}
The fractional differentiation $D^{\alpha,\eta}$  is inverse operation of fractional integration that is defined by 
\begin{equation}\label{eq-1.8}
[I^{\alpha,\eta}\omega](t)=\int_0^t\frac{(t-s)^{\alpha-1}e^{-\eta(t-s)}}{\Gamma(\alpha)}\omega(s)ds.
\end{equation}
From equations \eqref{eq-1.7}-\eqref{eq-1.8}, we have 
\begin{equation}\label{p12g}
[D^{\alpha,\eta}\omega]=I^{1-\alpha,\eta}D\omega.
\end{equation}
The fractional derivatives are nonlocal and involve singular and non-integrable kernels ($t^{-\alpha}$, $0<\alpha<1$). We refer the readers to \cite{samkokilbasmarichev:93} and the rich references therein for mathematical description of the fractional derivative. 
The  fractional order or, in general, of convolution type are not only important from the theoretical point of view but also for applications. They naturally arise in physical, chemical, biological, ecological phenomena see for example \cite{parkKang:11},  and the rich references therein. They are used to describe memory and hereditary properties of various materials and processes. For example, in viscoelasticity, due to the nature of the material microstructure, both elastic solid and viscous fluid like response qualities are involved. Using Boltzmann assumption, we end up with a stress-strain relationship defined by a time convolution. Viscoelastic response occurs in a variety of materials, such as soils, concrete, rubber, cartilage, biological tissue, glasses, and polymers (see  \cite{bagleytorvik2:83,bagleytorvik3:83,bagleytorvik1:83} and \cite{mainardibonetti}). In our case, the fractional dissipation describes an active boundary viscoelastic damper designed for the purpose of reducing the vibrations (see \cite{Mbodje:06,mbomon:95}).
\subsection{Motivation} Since the work of \cite{Lions88-1}, the study of the stabilization of damped wave equations retains the attention of many authors sees, for instance, \cite{Komornik94, Bardos:92, Lebeau1996, Zuazua90, Burq:98, conrad:91}. Let us recall the scalar fractional damped wave equation, that is 
\begin{equation}\label{eq-1.9}
\left\{\begin{array}{ll}
u_{tt}(x,t)-u_{xx}(x,t)=0,&(x,t)\in (0,1)\times \mathbb{R}_+^\star,\\ \noalign{\medskip}
u(0,t)=0,&t\in  \mathbb{R}_+^\star,\\ \noalign{\medskip}
u_x(1,t)+\gamma\partial_t^{\alpha,\eta}u(1,t)=0,&t\in  \mathbb{R}_+^\star,
\end{array}
\right.
\end{equation}
where $\gamma >0$, $\eta \geq 0$ and $\alpha\in ]0,1[$. In \cite{Mbodje:06}, it was proved that the energy of system \eqref{eq-1.9} does not decay uniformly (exponentially) to zero but polynomial energy decay rate of type ${t^{-1}}$ is obtained. This result has been recently improved by Akil and Wehbe  \cite{akilwehbe01}, where an improved polynomial decay rate of the energy of the multi-dimensional case of system \eqref{eq-1.9} in a bounded domain $\Omega\subset \R^d$ with smooth boundary $\Gamma$ has been established. Roughly speaking, the authors proved that the energy of smooth solutions converges to zero as $t$ goes to infinity, as ${t^{-\frac{1}{1-\alpha}}}$. \\[0.1in]
The question we are interested in this paper is what are the stability properties of our wave-wave coupled system \eqref{eq-1.1}-\eqref{eq-1.4}. Indeed, this system involves two wave equations coupled by velocities with only one fractional damping acting on a part of the boundary of the first equation. The second equation is indirectly damped through the coupling between the two equations (see the Literature below for the history of this kind of damping). So, from the mathematical point of view, it is important to study the stability of a system coupling a polynomially stable wave equation with a conservative one. Moreover, the study of this kind of systems is also motivated by several physical considerations. Indirect damping of reversible systems occurs in many applications in engineering and mechanics (see the literature below). It is well known that when the boundary damping is static; i.e., when 
\begin{equation*}
u_x(1,t)+\gamma \partial_t u(1,t)=0,\ \ t\in \mathbb{R}^+
\end{equation*}
the energy of the solution decays exponentially under the conditions that $b$ is outside a well determined discrete set of exceptional values, $a=1$ and $b\not=k\pi$, for some $k\in\Z$ and polynomially under some arithmetic conditions on $a$ and $b$ (see \cite{Najdi-Thesis}).  As mentioned above, the presence of the fractional time derivative at the boundary has a great impact on the stabilization of the system and its interesting from the theoretical point of view and also in several applications. The stability of a system of wave equations coupled by velocities with only one fractional damping remains an interesting open problem.
\subsection{The main goal of this paper} In this paper, we provide a complete analysis for the stability of system \eqref{eq-1.1}-\eqref{eq-1.4}. Unlike the static damping case, the resolvent of the operator associated with system \eqref{eq-1.1}-\eqref{eq-1.4} is not compact. First, using  general criteria of Arendt and Batty \cite{arendt:88}, we show that our system is strongly stable if and only if the coupling parameter $b$ is outside a well determined discrete set $D_{a,b}$ of exceptional values.  Next, using a spectral analysis, we prove that our system is not uniformly (exponentially) stable even when   $b\notin D_{a,b}$. Consequently, we look for a polynomial energy decay rate for smooth initial data by applying a frequency domain approach combining with a multiplier technique. Indeed, we show that, for $b\notin D_{a,b}$, the energy decay rate of system \eqref{eq-1.1}-\eqref{eq-1.4}  is greatly influenced by the order of the fractional derivative $\alpha$, the algebraic nature of the coupling parameter $b$ (an additional condition on $b$) and by the arithmetic property of the ratio of the wave propagation speeds $a$. Indeed, under the equal speed propagation condition {\it i.e} $a=1$, we establish an optimal polynomial energy decay rate of type $t^{-\frac{2}{{1-\alpha}}}$ if the coupling parameter $b\notin \pi \Z$ and of type $t^{-\frac{2}{{5-\alpha}}}$ if the coupling parameter $b\in \pi \Z$. Furthermore, when the wave propagate with different  speeds {\it i.e} $a\not=1$, we prove that, for any rational number $\sqrt{a}$ and almost all irrational number $\sqrt{a}$, the energy of our system decays  polynomially to zero like as $t^{-\frac{2}{{5-\alpha}}}$. This result still be holds if $a\in \Q$, $\sqrt{a}\notin \Q$ and $b$ small enough.
\subsection{Literature} The notion of indirect damping mechanisms has been introduced by Russell in \cite{Russell01}, and since this time, it retains the attention of many authors. In particular, the fact that only one equation of the coupled system is damped refers to the so-called class of "indirect" stabilization problems initiated and studied in \cite{alabau1999,alabauCannarsaKomornik2002,Boussouira01}, and further studied by many authors, see for instance  \cite{ZuazuazZhang01,LiuRao01,Boussouira04} and the rich references therein. In \cite{alabau1999,Boussouira01}, Alabau studied the boundary indirect stabilization of a system of two second order evolution equations coupled through the zero order terms. The lack of uniform stability was proved by a compact perturbation argument and a polynomial energy decay rate of type $t^{-1/2}$ is obtained by a general integral inequality in the case where the waves propagate at the same speed and $\Omega$ is a star-shaped domain in $\R^N$, or in the case where the ratio of the wave propagation speeds of the two equations is equal $k^{-2}$ with $k$ being an integer and $\Omega$, is a cubic domain of $\R^3$. Under  appropriate geometric conditions, these results have been generalized by Alabau and L\'eautaud in \cite{Boussouira04}, to the cases for which the coupling parameter is a non negative function can vanish in some part of $\Omega$. The polynomial decay rate and the control of a $1-d$ hyperbolic-parabolic coupled system has been studied by Zhang and Zuazua in \cite{ZuazuazZhang01}. In \cite{LiuRao01}, Liu and Rao considered a system of two coupled wave equations with one boundary damping and they proved that the energy of the system decays at the rate $t^{-1}$ for smooth initial data on a $N$-dimensional domain $\Omega$ with usual geometrical condition when the waves propagate at the same speed. On the contrary, under some arithmetic condition on the ratio of the wave propagation speeds of the two equations, they established a polynomial energy decay rate for smooth initial data on a one-dimensional domain. In \cite{KodjaBader01}, Ammar-Khodja and Bader studied the simultaneous boundary stabilization of a system of two wave equations coupling through the velocity terms. In addition to the previously cited papers, we rapidly recall some previous studies done on the coupled systems with different kinds of damping. Ammari and Mehrenberger in \cite{AmmariMehre01}, gave a characterization of the stability of a system of two evolution equations coupling through the velocity terms subject to one bounded viscous feedback damping. Note nevertheless that the above system does not enter in the framework of this paper. In \cite{AlabauCannarsaGuglielmi2011}, Alabau, Cannarsa, and Guglielmi studied the indirect stabilization of weakly coupled systems with hybrid boundary conditions, they proved polynomial decay for the energy of solutions. Alabau, Wang, and Yu in \cite{AlabauZhiqiangLixin2017} studied the stability of the hyperbolic system of a wave-wave type which is coupled through the velocities with only one equation is directly damped by a nonlinear damping. They established an explicit energy decay formula in terms of the behavior of the nonlinear feedback close to the origin. Last but not least, we refer to \cite{alabau1999,alabauCannarsaKomornik2002,alabau2005,alabau2004,BassamWehbe2015,BassamWehbe2016,huangfFlu,wehbeyoussef02,kliu97,SoufWehbe03,wehbeyoussef01,NadineAli:Bresse,zhda:14,benaissa:17,pruss84,kapitonov96, Loreti-Rao:06,Khokerbsoufyane07} for the indirect stabilization and the indirect exact controllability of distributed systems with different kinds of damping. \\
The indirect control and stabilization of reversible systems occur in many applications in engineering and mechanics we quote \cite{SoufWehbe03, wehbeyoussef01, BassamWehbe2015, BassamWehbe2016, Alabau-Timo} for the Timoshenko system in bounded or unbounded domains, \cite{LiuRao-Bresse, wehbeyoussef02, WehbeNoun-Bresse, Alabau-Bresse, NadineAli:Bresse}  for the Bresse system.
\subsection{Description of the paper}\noindent This paper is organized as follows: In Section \ref{Section-2}, first, we show that the system \eqref{eq-1.1}-\eqref{eq-1.6} can be replaced by an augmented model by coupling the wave equation with a suitable diffusion equation for can reformulate into an evolution equation and we deduce the well-posedness property of the problem by the semigroup approach. Second, using a criteria  of Arendt-Batty we show that the augmented model is strongly stable in absence of compactness of the resolvent under a condition on $b$. In Section \ref{Section-3}, we show that the augmented model is non uniformly stable; i.e., (non exponential), this result is due to the fact that a subsequence of eigenvalues is due to the imaginary axis. In Section \ref{Section-4}, we show the polynomial energy decay rate for the system \eqref{eq-1.1}-\eqref{eq-1.6}.  Roughly speaking, we show that the energy of smooth solutions converges to zero as $t$ goes to infinity, as $t^{-s\left(\alpha\right)}$, where
\begin{equation*}
s\left(\alpha\right)=\left\{\begin{array}{lll}
\displaystyle{\frac{2}{1-\alpha}}& \displaystyle{\text{if }a=1 \text{ and }b\neq k\pi },
\\ \noalign{\medskip}
\displaystyle{\frac{2}{5-\alpha}}& \displaystyle{\text{if }a=1 \text{ and }b= k\pi},
\\ \noalign{\medskip}
\displaystyle{\frac{2}{5-\alpha}}& \displaystyle{\text{if }a\neq1,\ a\in \mathbb{Q},\ \sqrt{a}\not\in \mathbb{Q}  \text{ and } b\text{ small enough} },
\\ \noalign{\medskip}
\displaystyle{\frac{2}{5-\alpha}}& \displaystyle{\text{if }a\neq1,\  \sqrt{a}\in \mathbb{Q}, }
\\ \noalign{\medskip}
\displaystyle{\frac{2}{5-\alpha}}& \displaystyle{\text{if }a\neq1\ \text{and for almost }  \sqrt{a}\in \mathbb{R}/\mathbb{Q}. }
\end{array}\right.
\end{equation*}
\section{Well-Posedness and Strong Stability}\label{Section-2}
\noindent In this Section, we will study the strong stability of system \eqref{eq-1.1}-\eqref{eq-1.6} in the absence of the compactness of the resolvent. First, we will study the existence, uniqueness, and regularity of the solution of our system. 
\subsection{Augmented model and Well-Posedness}\label{Section-2.1}
\noindent Firstly, we reformulate system \eqref{eq-1.1}-\eqref{eq-1.6} into an augmented system. For this aim, we recall Theorem 2 in \cite{Mbodje:06}.
\begin{theoreme}\label{Theorem-2.1}$\left(\textbf{See Theorem 2 in  \cite{Mbodje:06}}\right)$
\noindent Let $\mu$ be the function defined by 
$$
\mu(\xi)=|\xi|^{\frac{2\alpha-1}{2}},\quad \xi\in \R\ \ {\rm \text{and}}\ \ \alpha\in ]0,1[.
$$
Then, the relation between the "input" $U$ and the "output" $O$ of the following system
$$ 
\begin{array}{lllll}
\partial_t\omega(\xi,t)+\left(\xi^2+\eta\right)\omega(\xi,t)-U(t)\mu(\xi)&=&0,&(\xi,t)\in  \R\times \R^+,\\ \noalign{\medskip}
\omega(\xi,0)&=&0,& \xi\in  \R,\\ \noalign{\medskip}
\displaystyle
O(t)-\frac{\sin(\alpha\pi)}{\pi}\int_{\R}\mu(\xi)\omega(\xi,t)d\xi&=&0,&\xi\in  \R,
\end{array}
$$
is given by 
$$
O=I^{1-\alpha,\eta}U,
$$
where $I^{\alpha,\eta}$ is given by \eqref{eq-1.8}.
\end{theoreme}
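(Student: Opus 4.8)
The plan is to treat the first equation as a scalar linear first-order ODE in $t$ for each fixed frequency $\xi$, solve it explicitly by Duhamel's formula, insert the resulting $\omega(\xi,t)$ into the defining relation for $O(t)$, and then collapse the frequency integral into a Gamma function so that the claimed convolution kernel $(t-s)^{-\alpha}e^{-\eta(t-s)}/\Gamma(1-\alpha)$ emerges. First I would observe that, with the zero initial datum $\omega(\xi,0)=0$, multiplying by the integrating factor $e^{(\xi^2+\eta)t}$ gives
\[
\omega(\xi,t)=\mu(\xi)\int_0^t e^{-(\xi^2+\eta)(t-s)}U(s)\,ds .
\]
Substituting this into $O(t)=\frac{\sin(\alpha\pi)}{\pi}\int_\R \mu(\xi)\,\omega(\xi,t)\,d\xi$ and using $\mu(\xi)^2=|\xi|^{2\alpha-1}$, an application of Fubini's theorem exchanges the order of integration to produce
\[
O(t)=\frac{\sin(\alpha\pi)}{\pi}\int_0^t U(s)\left(\int_\R |\xi|^{2\alpha-1}e^{-(\xi^2+\eta)(t-s)}\,d\xi\right)ds .
\]

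The \emph{key computation} is the inner frequency integral. Writing $\tau=t-s>0$ and using evenness, it equals $2e^{-\eta\tau}\int_0^\infty \xi^{2\alpha-1}e^{-\xi^2\tau}\,d\xi$; the substitution $u=\xi^2\tau$ turns this into $e^{-\eta\tau}\tau^{-\alpha}\int_0^\infty u^{\alpha-1}e^{-u}\,du=\Gamma(\alpha)\,\tau^{-\alpha}e^{-\eta\tau}$. Hence
\[
O(t)=\frac{\sin(\alpha\pi)\,\Gamma(\alpha)}{\pi}\int_0^t (t-s)^{-\alpha}e^{-\eta(t-s)}U(s)\,ds .
\]
Comparing with the definition \eqref{eq-1.8} of $I^{1-\alpha,\eta}$, it remains only to check the constant, i.e. that $\frac{\sin(\alpha\pi)\,\Gamma(\alpha)}{\pi}=\frac{1}{\Gamma(1-\alpha)}$, which is precisely Euler's reflection formula $\Gamma(\alpha)\Gamma(1-\alpha)=\pi/\sin(\alpha\pi)$, valid for $\alpha\in\,]0,1[$. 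This identification yields $O=I^{1-\alpha,\eta}U$, as claimed.

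The only delicate points are the justification of Fubini and the convergence of the frequency integral. After taking absolute values the integrand is nonnegative, the singularity $|\xi|^{2\alpha-1}$ at the origin is integrable since $2\alpha-1>-1$ for $\alpha>0$, the Gaussian factor controls the behaviour at infinity for $\tau>0$, and the kernel singularity $(t-s)^{-\alpha}$ is integrable on $[0,t]$ since $\alpha<1$; thus all the hypotheses $0<\alpha<1$ are used exactly where expected. I anticipate no genuine obstacle here, the heart of the matter being the identification of the Gamma-function kernel and the reflection formula rather than any analytic subtlety.
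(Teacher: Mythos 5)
Your proof is correct. Note, however, that the paper itself contains no proof of Theorem \ref{Theorem-2.1}: the statement is imported verbatim, as a black box, from Theorem 2 of \cite{Mbodje:06}, and is used only to justify replacing the fractional boundary condition \eqref{eq-1.4} by the augmented diffusive system \eqref{eq-2.1}--\eqref{eq-2.5}. Your computation therefore reconstructs the cited result, and it does so by what is essentially the standard (and the original) argument: Duhamel's formula for the frequency ODE, exchange of integrals by Fubini--Tonelli, reduction of the frequency integral to $\Gamma(\alpha)\,\tau^{-\alpha}e^{-\eta\tau}$ via the substitution $u=\xi^{2}\tau$, and Euler's reflection formula to identify the constant $\frac{\sin(\alpha\pi)\Gamma(\alpha)}{\pi}=\frac{1}{\Gamma(1-\alpha)}$. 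All steps are sound; the one point worth making explicit is the standing regularity hypothesis on the input (e.g. $U$ locally integrable or continuous, as in \cite{Mbodje:06}), since that is what licenses both the Duhamel representation and the Tonelli justification of the interchange. With that stated, the two singularities in play, $|\xi|^{2\alpha-1}$ at $\xi=0$ and $(t-s)^{-\alpha}$ at $s=t$, are integrable precisely because $0<\alpha<1$, exactly as you observe, so the argument is complete.
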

\noindent From Theorem \ref{Theorem-2.1} and equation \eqref{p12g}, system \eqref{eq-1.1}-\eqref{eq-1.6} may be recast into the following augmented model 
\begin{eqnarray}
u_{tt}-u_{xx}+by_t&=&0,\quad (x,t)\in (0,1)\times \R^+,\label{eq-2.1}\\ \noalign{\medskip}
y_{tt}-ay_{xx}-bu_t&=&0,\quad (x,t)\in  (0,1)\times \R^+,\label{eq-2.2}\\ \noalign{\medskip}
\omega_t(\xi,t)+(\xi^2+\eta)\omega(\xi,t)-u_t(1,t)\mu(\xi)&=&0,\quad  (\xi,t)\in  \R\times \R^+,\label{eq-2.3}\\ \noalign{\medskip}
y(0,t)=y(1,t)=u(0,t)&=&0,\quad t\in\R^+,\label{eq-2.4}\\
u_x(1,t)+\gamma\kappa(\alpha)\int_{\R}\mu(\xi)\omega(\xi,t)d\xi&=&0,\quad t\in\R^+,\label{eq-2.5}
\end{eqnarray}
where $\kappa(\alpha)=\frac{\sin(\alpha\pi)}{\pi}$, since $\alpha\in]0,1[$, then $\kappa(\alpha)>0.$  System \eqref{eq-2.1}-\eqref{eq-2.5}  is considered with the following initial conditions 
\begin{eqnarray}
u(x,0)=u_0(x),& u_t(x,0)=u_1(x),& x\in (0,1),\label{eq-2.6}\\ \noalign{\medskip}
y(x,0)=y_0(x),& y_t(x,0)=y_1(x),&  x\in (0,1),\label{eq-2.7}\\ \noalign{\medskip}
&\omega(\xi,0)=0,& \xi\in \mathbb{R}.\label{eq-2.8}
\end{eqnarray} 
Let us define the energy space
	\begin{equation*}
	\mathcal{H}=H_{L}^1(0,1)\times L^2(0,1)\times H_0^1(0,1) \times L^2(0,1)\times L^2(\R)
	\end{equation*}
	equipped with the following inner product
	$$
	\left<(u,v,y,z,\omega),(\tilde{u},\tilde{v},\tilde{y},\tilde{z},\tilde{\omega})\right>_\mathcal{H}=\int_{0}^1\left(v\bar{\tilde{v}}+u_x\bar{\tilde{u_x}}+z\bar{\tilde{z}}+ay_x\bar{\tilde{y_x}}\right)dx+\gamma\kappa(\alpha)\int_{\R}\omega(\xi)\bar{\tilde{\omega}}(\xi)d\xi,
	$$
	where $H_{L}^1(0,1)$ is given  by 
	$$
	H_{L}^1(0,1)=\left\{u\in H^1(0,1),\quad u(0)=0\right\}.
	$$
The energy of system \eqref{eq-2.1}-\eqref{eq-2.8} is given by
	\begin{equation*}
	E(t)=\frac{1}{2}\|(u,u_t,y,y_t,\omega)\|_\mathcal{H}^2.
	\end{equation*}
	For smooth solution, a direct computation gives
	\begin{equation*}
	E^\prime(t)=-\gamma \kappa(\alpha) \int_{\R}(\xi^2+\eta)|w(\xi,t)|^2d\xi\leq 0.
	\end{equation*}
	Then, system \eqref{eq-2.1}-\eqref{eq-2.8} is dissipative in the sense that its energy is a non-increasing function of the time variable $t$.
	Next, we define the unbounded linear operator $\mathcal{A}$ by 
\begin{equation*}
D(A)=\left\{\begin{array}{l}
\vspace{0.15cm}\displaystyle
U=(u,v,y,z,\omega) \in\mathcal{H};\ u\in H^2\left(0,1\right)\cap H_L^1(0,1),\ y\in H^2\left(0,1\right)\cap H_0^1(0,1)\\ 
\\ \noalign{\medskip}\displaystyle
v\in H_L^1(0,1),\ z\in H_0^1\left(0,1\right),\ -\left(\xi^2+\eta\right)\omega+v\left(1\right)\mu(\xi)\in L^2\left(\R\right),\\ 
\\ \noalign{\medskip}
\vspace{0.15cm}\displaystyle
u_x(1)+\gamma\kappa(\alpha) \int_{\mathbb{R}}\mu(\xi)\omega(\xi)d\xi=0,\ |\xi|\omega\in L^2\left(\R\right)
\end{array}\right\}
\end{equation*}
and 
$$
\mathcal{A}\left(u, v,y, z, \omega\right)=\left(
v,u_{xx}-bz, z, ay_{xx}+bv, -\left(\xi^2+\eta\right)\omega+v(1)\mu(\xi)\right).
$$
If $U=(u,u_t,y,y_t,\omega)$ is a regular solution of system \eqref{eq-2.1}-\eqref{eq-2.8}, then we rewrite this system as the following evolution equation
\begin{equation}\label{eq-2.9}
U_t=\mathcal{A}U,\quad
U(0)=U_0,
\end{equation}
where $U_0=(u_0,u_1,y_0,y_1,\omega)$.		
			\begin{pro}\label{Theorem-2.2}
\noindent The unbounded linear operator $\mathcal{A}$ is m-dissipative in the energy space $\mathcal{H}$.
		\end{pro}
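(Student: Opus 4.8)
The plan is to invoke the Lumer--Phillips theorem: since $\mathcal{H}$ is a Hilbert space, it suffices to establish (i) $\mathrm{Re}\langle \mathcal{A}U,U\rangle_{\mathcal{H}}\le 0$ for every $U\in D(\mathcal{A})$ and (ii) the range condition $R(I-\mathcal{A})=\mathcal{H}$; density of $D(\mathcal{A})$ is then automatic (and in any case routine to check). So I would split the argument into dissipativity and maximality.

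For dissipativity I would take $U=(u,v,y,z,\omega)\in D(\mathcal{A})$, expand $\langle \mathcal{A}U,U\rangle_{\mathcal{H}}$ from the definition of the inner product, and integrate by parts in the two terms $\int_0^1 u_{xx}\bar v\,dx$ and $\int_0^1 ay_{xx}\bar z\,dx$. The boundary contributions at $x=0$ drop because $v(0)=0$, and those at $x=0,1$ for the second equation drop because $z\in H_0^1$. The elastic terms $\int_0^1 u_x\bar v_x\,dx$, $a\int_0^1 y_x\bar z_x\,dx$ and the velocity-coupling terms $\pm b\int_0^1 v\bar z\,dx$ then appear together with their complex conjugates, hence are purely imaginary and vanish upon taking real parts. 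The only surviving boundary term, $u_x(1)\bar v(1)$, is eliminated using the constraint $u_x(1)=-\gamma\kappa(\alpha)\int_{\R}\mu(\xi)\omega\,d\xi$ built into $D(\mathcal{A})$, which cancels exactly against the $v(1)\mu(\xi)$ contribution coming from the $\omega$-component. What remains is $\mathrm{Re}\langle\mathcal{A}U,U\rangle_{\mathcal{H}}=-\gamma\kappa(\alpha)\int_{\R}(\xi^2+\eta)|\omega(\xi)|^2\,d\xi\le 0$, in agreement with the energy identity stated above.

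For maximality I would fix $F=(f_1,f_2,f_3,f_4,f_5)\in\mathcal{H}$ and solve $(I-\mathcal{A})U=F$. The first, third and fifth lines give $v=u-f_1$, $z=y-f_3$ and $\omega=(f_5+v(1)\mu(\xi))/(1+\xi^2+\eta)$; it is essential to work with $\lambda=1$ rather than $\lambda=0$, since the denominator $1+\xi^2+\eta$ is then bounded below even when $\eta=0$, which is exactly what forces $\omega\in L^2(\R)$. Substituting into the two remaining lines reduces the problem to a coupled elliptic system
\begin{equation*}
-u_{xx}+u+by=f_2+f_1+bf_3,\qquad -ay_{xx}+y-bu=f_4+f_3-bf_1
\end{equation*}
on $(0,1)$ with $u(0)=y(0)=y(1)=0$ and the Robin-type condition $u_x(1)+\gamma\kappa(\alpha)C_1\,u(1)=(\text{data})$, where $C_1=\int_{\R}|\xi|^{2\alpha-1}/(1+\xi^2+\eta)\,d\xi$. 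The finiteness of $C_1$ (and of the companion integral furnishing the data, which uses $f_5\in L^2$ and Cauchy--Schwarz) is where the hypothesis $\alpha\in\,]0,1[$ enters, the integrand being integrable both at $\xi=0$ and at infinity precisely in this range. I would then recast the system in weak form on $V=H_L^1(0,1)\times H_0^1(0,1)$ and apply Lax--Milgram.

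The crux is coercivity of the associated sesquilinear form $\mathcal{B}$. Computing $\mathrm{Re}\,\mathcal{B}((u,y),(u,y))$, the antisymmetric coupling $b\int_0^1 y\bar u\,dx-b\int_0^1 u\bar y\,dx$ is purely imaginary and drops out, leaving $\|u_x\|^2+\|u\|^2+a\|y_x\|^2+\|y\|^2+\gamma\kappa(\alpha)C_1|u(1)|^2$, which dominates $\min(1,a)\big(\|u\|_{H^1}^2+\|y\|_{H^1}^2\big)$; hence coercivity holds for every $b\in\R^\ast$, with no smallness assumption, and continuity is routine once the trace term $u(1)\bar\phi(1)$ is controlled by the trace theorem. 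Lax--Milgram yields a unique $(u,y)\in V$, from which I recover $v,z,\omega$ and verify $U\in D(\mathcal{A})$: elliptic regularity gives $u,y\in H^2$, the identity $-(\xi^2+\eta)\omega+v(1)\mu=\omega-f_5$ places that component in $L^2(\R)$ for free, and $|\xi|\omega\in L^2(\R)$ follows from the convergence of $\int_{\R}|\xi|^{2\alpha+1}/(1+\xi^2+\eta)^2\,d\xi$, again thanks to $\alpha\in\,]0,1[$. I expect the main obstacle to be the bookkeeping of the nonlocal coupling between the boundary flux $u_x(1)$ and the infinite-dimensional variable $\omega$, so that it collapses into a genuine Robin condition; once that reduction is in place, the dissipativity computation and the Lax--Milgram step are essentially mechanical.
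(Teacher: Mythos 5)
Your proposal is correct and follows essentially the same route as the paper: the same dissipativity computation (integration by parts, cancellation of the boundary term $u_x(1)\bar v(1)$ against the $v(1)\mu(\xi)$ contribution from the $\omega$-component), and the same maximality argument solving $(I-\mathcal{A})U=F$ by eliminating $v$, $z$, $\omega$ to obtain the coupled elliptic system with the Robin condition $u_x(1)+M_2 u(1)=(\text{data})$ and applying Lax--Milgram. The only difference is that you spell out details the paper leaves implicit (coercivity via the antisymmetric coupling, and the $L^2$ checks on $\omega$ and $|\xi|\omega$), all of which are accurate.
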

		\begin{proof}
			For all $U=(u,v,y,z,\omega)\in D\left(\mathcal{A}\right)$, we have 
			\begin{equation*}
			\Re\left(\left<\mathcal{A}U,U\right>_{\mathcal{H}}\right)=-\gamma\kappa(\alpha) \int_{\mathbb{R}}(\xi^2+\eta)|\omega(\xi)|^2d\xi\leq 0,
			\end{equation*}
			which implies that $\mathcal{A}$ is dissipative. Now, let $F=(f_1,f_2,f_3,f_4,f_5)$, we prove the existence of $U=(u,v,y,z,\omega)\in D(\mathcal{A})$, solution of the equation 
			\begin{equation}\label{eq-2.10}
			(I-\mathcal{A})U=F.
			\end{equation}
			Equivalently, we have the following system 
			\begin{eqnarray}
			u-v&=&f_1,\label{eq-2.11}\\
			v-u_{xx}+bz&=&f_2,\label{eq-2.12}\\
			y-z&=&f_3,\label{eq-2.13}\\
			z-ay_{xx}-bv&=&f_4,\label{eq-2.14}\\
			(1+\xi^2+\eta)\omega(\xi)-v(1)\mu(\xi)&=&f_5(\xi).\label{eq-2.15}
			\end{eqnarray}
			Using equations \eqref{eq-2.15}, \eqref{eq-2.11} and the fact that $\eta \geq 0$, we get 
			\begin{equation}\label{eq-2.16}
			\omega(\xi)=\frac{f_5(\xi)}{1+\xi^2+\eta}+\frac{u(1)\mu(\xi)}{1+\xi^2+\eta}-\frac{f_1(1)\mu(\xi)}{1+\xi^2+\eta}.
			\end{equation}
			Inserting equations \eqref{eq-2.11} and  \eqref{eq-2.13} into \eqref{eq-2.12} and \eqref{eq-2.14}, we get 
			\begin{eqnarray}
			u-u_{xx}+by&=&f_1+f_2+bf_3,\label{eq-2.17}\\
			y-ay_{xx}-bu&=&-bf_1+f_3+f_4,\label{eq-2.18}
			\end{eqnarray}
			with the boundary conditions 
			\begin{equation}\label{eq-2.19}
			u(0)=0,\quad u_x(1)+M_2(\eta,\alpha)u(1)=M_2(\eta,\alpha)f_1(1)-M_1(\eta,\alpha)\quad \text{and}\quad y(0)=y(1)=0,
			\end{equation}
where 
$$M_1(\eta,\alpha)= \gamma\kappa(\alpha) \int_{\mathbb{R}}\frac{\mu(\xi)f_5(\xi)}{1+\eta+\xi^2}d\xi \quad \text{and} \quad M_2(\eta,\alpha)=\gamma\kappa(\alpha) \int_{\mathbb{R}}\frac{\mu^2(\xi)}{1+\eta+\xi^2}d\xi.$$
Using the fact $f_5\in L^2(\R)$, the definition of $\mu(\xi)$ and the fact that $\alpha\in ]0,1[,\ \eta\geq0$, it is easy to check that $M_1(\eta,\alpha)$ and $M_2(\eta,\alpha)$ are well defined. 
So, using Lax-Milligram Theorem and the fact that the second members of \eqref{eq-2.17} and \eqref{eq-2.18} are in $L^2(0,1)$, we deduce that there exists $(u,y)\in H^2(0,1)\cap H_L^1(0,1)\times H^2(0,1)\cap H_0^1(0,1)$ unique strong solution of \eqref{eq-2.17}-\eqref{eq-2.19}. Consequently, defining $v=u-f_1$, $z=y-f_3$ and $\omega$ by \eqref{eq-2.16}, we deduce that $U=(u,v,y,z,\omega)\in D(\AA)$ is the unique solution of \eqref{eq-2.10}. The proof is thus complete.
\end{proof}
\noindent From Proposition \ref{Theorem-2.2}, the operator $\AA$ is m-dissipative on $\HH$ and consequently, generates a $C_0-$semigroup of contractions $e^{t\AA}$ following Lummer-Phillips Theorem (see in \cite{liu:99} and \cite{pazy}). Then the solution of the evolution equation \eqref{eq-2.9}	admits the following representation
$$
U(t)=e^{t\AA}U_0,\quad t\geq 0,
$$
which leads to the well-posedness of \eqref{eq-2.9}. Hence, we have the following result.
		\begin{theoreme}\label{Theorem-2.3}
Let $U_0\in \mathcal{H}$ then, problem \eqref{eq-2.9} admits a unique weak solution $U$ satisfies 
		$$U(t)\in C^0\left(\R^+,\mathcal{H}\right).$$
Moreover, if $U_0\in D(\mathcal{A})$ then, problem \eqref{eq-2.9} admits a unique strong solution $U$ satisfies 
$$U(t)\in C^1\left(\R^+,\mathcal{H}\right)\cap C^0(\R^+,D(\mathcal{A})).$$
	\end{theoreme}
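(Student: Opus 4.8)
The plan is to read off the conclusion from abstract semigroup theory, since the substantive analytic work—solving the resolvent equation $(I-\mathcal{A})U=F$—has already been carried out in the proof of Proposition \ref{Theorem-2.2}. First I would simply record what that proposition gives: $\mathcal{A}$ is dissipative on $\HH$ (from $\Re\langle\mathcal{A}U,U\rangle_{\HH}\le 0$) and satisfies the range condition $\mathrm{Ran}(I-\mathcal{A})=\HH$, i.e.\ $\mathcal{A}$ is m-dissipative.

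Next I would check the one hypothesis not explicitly addressed so far, namely that $D(\mathcal{A})$ is dense in $\HH$, which is needed to invoke Lumer--Phillips. I would dispatch this in a single line via the general fact that an m-dissipative operator on a reflexive Banach space is automatically densely defined; since $\HH$ is a Hilbert space it is reflexive, so density is free. (Alternatively one can exhibit a dense set of smooth admissible data directly, but the reflexivity argument is cleaner and avoids fussing over the $\xi$-variable constraints $|\xi|\omega\in L^2(\R)$ and $-(\xi^2+\eta)\omega+v(1)\mu(\xi)\in L^2(\R)$ in $D(\mathcal{A})$.) With density in hand, the Lumer--Phillips theorem (as in \cite{liu:99,pazy}) applies and yields that $\mathcal{A}$ generates a $C_0$-semigroup of contractions $\bigl(e^{t\mathcal{A}}\bigr)_{t\ge 0}$ on $\HH$. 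This is precisely the statement already anticipated in the paragraph preceding the theorem.

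Finally I would quote the standard generation and regularity theory for $C_0$-semigroups. For arbitrary $U_0\in\HH$, the orbit $U(t)=e^{t\mathcal{A}}U_0$ is the unique mild (weak) solution of \eqref{eq-2.9} and satisfies $U\in C^0(\R^+,\HH)$; uniqueness and continuity are intrinsic to the semigroup. For $U_0\in D(\mathcal{A})$, the same orbit is the unique strong solution: $U$ is continuously differentiable with $U_t=\mathcal{A}U$ holding in $\HH$ for every $t\ge 0$, and $U(t)\in D(\mathcal{A})$ for all $t$, whence $U\in C^1(\R^+,\HH)\cap C^0(\R^+,D(\mathcal{A}))$. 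There is no real obstacle here—the only point deserving a moment of care is the density of $D(\mathcal{A})$, and that is settled by reflexivity; everything else is a direct citation of Lumer--Phillips and the associated abstract Cauchy-problem regularity results.
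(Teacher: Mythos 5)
Your proposal is correct and follows essentially the same route as the paper: Proposition \ref{Theorem-2.2} supplies m-dissipativity, the Lumer--Phillips theorem (cited to \cite{liu:99,pazy}) yields the contraction semigroup $e^{t\mathcal{A}}$, and the existence, uniqueness and regularity statements are then read off from standard $C_0$-semigroup theory for the abstract Cauchy problem. Your additional observation that the density of $D(\mathcal{A})$ in $\mathcal{H}$ is automatic because an m-dissipative operator on a reflexive (here Hilbert) space is densely defined addresses a hypothesis the paper leaves implicit, which is a refinement of, not a departure from, its argument.
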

\subsection{Strong Stability}\label{Section-2.2}
\noindent In this part, we study the strong stability of system \eqref{eq-2.1}-\eqref{eq-2.8} in the sense that its energy converges to zero when $t$ goes infinity for all initial data in $\HH$. Since the resolvent of $\AA$ is not compact, then the classical methods such as Lasalle's invariance principle \cite{Slemrod:89} or the spectrum decomposition theory of Benchimol \cite{benchimol:78} are not applicable in this case. Then, we will use a general criteria of Arendt-Batty \cite{arendt:88}, following which a $C_0-$semigroup of contractions $e^{t\AA}$ in a Banach space is strongly stable, if $\AA$ has no pure imaginary eigenvalues and $\sigma(\AA)\cap i\R$ contains only a countable number of elements. So, we will prove the following stability result.
\begin{theoreme}\label{Theorem-2.4}
Assume that $\eta\geq 0$, then, the $C_0-$semigroup of contractions $e^{t\AA}$ is strongly stable on $\HH$ in the sense that $\displaystyle{\lim_{t\to +\infty}\|e^{t\AA}U_0\|_{\HH}=0}$ for all $U_0\in \HH$ if and only if 
\begin{equation}\tag{${\rm SC}$}
b^2\neq \frac{(k_1^2-ak_2^2)(ak_1^2-k_2^2)}{(a+1)(k_1^2+k_2^2)}\pi^2,\quad \forall k_1,k_2\in \Z.
\end{equation}
\end{theoreme}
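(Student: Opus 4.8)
The plan is to verify the two hypotheses of the Arendt--Batty criterion \cite{arendt:88}: that $\mathcal{A}$ has no eigenvalue on $i\mathbb{R}$, and that $\sigma(\mathcal{A})\cap i\mathbb{R}$ is at most countable. The content of the theorem is that the first hypothesis holds exactly when (SC) holds, so the bulk of the work is a spectral analysis locating the purely imaginary eigenvalues, after which the non-compactness of the resolvent is dealt with separately to secure countability.

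First I would look for $\lambda\in\mathbb{R}$ and $U=(u,v,y,z,\omega)\neq 0$ with $\mathcal{A}U=i\lambda U$. Taking real parts and using the dissipation identity $\Re\langle\mathcal{A}U,U\rangle_{\mathcal H}=-\gamma\kappa(\alpha)\int_{\mathbb R}(\xi^2+\eta)|\omega|^2\,d\xi$ gives $\Re(i\lambda\|U\|_{\mathcal H}^2)=0$, hence $\int_{\mathbb R}(\xi^2+\eta)|\omega|^2\,d\xi=0$ and therefore $\omega\equiv0$. Feeding this into \eqref{eq-2.3} and \eqref{eq-2.5} forces $v(1)=0$ and $u_x(1)=0$; for $\lambda\neq0$ the relations $v=i\lambda u$, $z=i\lambda y$ then turn the eigenvalue equation into the coupled system
\begin{equation}
u''+\lambda^2u=i\lambda b\,y,\qquad a\,y''+\lambda^2y=-\,i\lambda b\,u,
\end{equation}
subject to the \emph{overdetermined} boundary data $u(0)=u(1)=u_x(1)=0$ and $y(0)=y(1)=0$. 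Expanding $u=\sum_n a_n\sin(n\pi x)$, $y=\sum_n c_n\sin(n\pi x)$ (legitimate since $u,y$ vanish at both endpoints) diagonalizes the system, and a nonzero mode $n$ can appear only if $(\lambda^2-n^2\pi^2)(\lambda^2-a\,n^2\pi^2)=\lambda^2b^2$, in which case $a_n\neq0$. The key observation is that a single active mode is excluded by the extra condition: $u_x(1)=a_n\,n\pi(-1)^n\neq0$. Hence an eigenfunction must involve two \emph{distinct} modes $k_1\neq k_2$ resonating at the same $\lambda$. Subtracting their mode relations eliminates the quartic terms and yields $\lambda^2=\dfrac{a(k_1^2+k_2^2)\pi^2}{a+1}$, and substituting back into one mode relation gives, after simplification,
\begin{equation}
b^2=\frac{(k_1^2-a k_2^2)(a k_1^2-k_2^2)}{(a+1)(k_1^2+k_2^2)}\,\pi^2 .
\end{equation}
Conversely, whenever $b^2$ takes this value for some $k_1\neq k_2$, one forms the combination $\beta_1(\text{mode }k_1)+\beta_2(\text{mode }k_2)$ with $\beta_1,\beta_2$ chosen so that $u_x(1)=0$; since the two sine modes are independent this is a genuine nonzero eigenfunction. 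Together with a direct check that $\lambda=0$ is not an eigenvalue, this shows $\sigma_p(\mathcal{A})\cap i\mathbb{R}=\emptyset$ precisely under (SC).

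For countability, the point is that although the full resolvent is not compact (because of the $L^2(\mathbb R)$ component), on the punctured axis it becomes harmless. For $\lambda\neq0$ I would solve $(i\lambda-\mathcal{A})U=F$ by first using \eqref{eq-2.3} to write $\omega=\dfrac{f_5+v(1)\mu(\xi)}{i\lambda+\xi^2+\eta}$, which lies in $L^2(\mathbb R)$ because $|i\lambda+\xi^2+\eta|\geq|\lambda|>0$ and $\alpha\in\,]0,1[$ controls the behavior of $\mu$ at $0$ and $\infty$. Eliminating $v,z,\omega$ reduces the equation to a coupled elliptic system for $(u,y)$ with a Robin-type condition at $x=1$; this is a compact (lower-order plus boundary) perturbation of an isomorphism, hence $i\lambda-\mathcal{A}$ is Fredholm of index zero. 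Consequently $i\lambda\in\rho(\mathcal{A})$ unless $i\lambda$ is an eigenvalue, so $\sigma(\mathcal{A})\cap i\mathbb{R}^\ast$ reduces to the discrete set of resonance values found above, and adding the single point $\{0\}$ keeps $\sigma(\mathcal{A})\cap i\mathbb{R}$ countable. Arendt--Batty then yields strong stability under (SC), while if (SC) fails the eigenmode $e^{i\lambda t}U_0$ has constant norm, so $\|e^{t\mathcal{A}}U_0\|_{\mathcal H}\not\to0$ and strong stability fails, giving the converse.

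I expect the main obstacle to be the eigenvalue step: recognizing that the damping at $x=1$ makes the boundary problem overdetermined, that single-mode solutions are therefore inadmissible, and that a \emph{double} resonance is the only mechanism producing an imaginary eigenvalue—this is exactly what produces the precise algebraic form of (SC). A secondary technical point is justifying the Fredholm reduction despite the absence of a compact resolvent, which is what guarantees there is no continuous spectrum on $i\mathbb{R}^\ast$ and hence the required countability.
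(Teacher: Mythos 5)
Your proposal is correct and follows the paper's overall architecture (Arendt--Batty criterion, dissipation identity forcing $\omega\equiv0$ and hence the overdetermined boundary conditions, then a Fredholm-type argument for the range), but the heart of the matter---locating the imaginary eigenvalues---is done by a genuinely different method. The paper (Lemma \ref{Theorem-2.5}) eliminates $y$, integrates the resulting fourth-order ODE \eqref{eq-2.30} explicitly via the characteristic roots \eqref{eq-2.34}--\eqref{eq-2.35}, and runs a four-case discussion on the vanishing of $\sinh(r_1)$, $\sinh(r_3)$, with $\la=\pm b$ treated separately. You instead expand $(u,y)$ in the Dirichlet sine basis (legitimate, since $u,y\in H^2(0,1)\cap H^1_0(0,1)$ makes the sine expansion of $u_{xx},y_{xx}$ boundary-term free), observe that a nonvanishing mode forces the resonance relation $(\la^2-n^2\pi^2)(\la^2-an^2\pi^2)=\la^2b^2$, that at most two integers can resonate since this is quadratic in $n^2$, that a single active mode is excluded by $u_x(1)=a_n n\pi(-1)^n\neq0$, and that a double resonance yields exactly the excluded values of $b^2$: this reproduces the algebra of \eqref{eq-2.39}--\eqref{eq-2.41} with no case analysis and with $\la=\pm b$ absorbed automatically. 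Your converse, stated for two distinct nonzero modes $k_1\neq k_2$, is in fact the sharp form of the condition: the degenerate pairs permitted by the literal statement of $({\rm SC})$, such as $k_2=0$ (equivalently $\la=\pm b$), are shown by both arguments \emph{not} to produce eigenvalues---the paper's explicit eigenvector degenerates to zero there. What the paper's heavier computation buys is the explicit roots $r_1,r_3$, which are reused in the spectral asymptotics of Section \ref{Section-3}. For countability, your ``compact perturbation of an isomorphism, hence Fredholm of index zero'' reduction is the same mechanism as the paper's Lemmas \ref{Theorem-2.7} and \ref{Theorem-2.9} (Lax--Milgram for an auxiliary operator $\mathcal{L}$, compactness of $\mathcal{L}^{-1}$, Fredholm alternative, then the closed graph theorem); you pass over the lifting of the inhomogeneous Robin datum generated by $f_1(1)$ and $f_5$, which the paper handles with the auxiliary functions $\varphi,\psi,\theta$, but that step is routine. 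Your closing remark that for $\eta=0$ one can only expect countability of $\sigma(\mathcal{A})\cap i\R$ rather than $i\R\subset\rho(\mathcal{A})$ is exactly right, since $0\in\sigma(\mathcal{A})$ in that case by the paper's Lemma \ref{Theorem-2.6}.
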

\noindent For the proof of Theorem \ref{Theorem-2.4}, we need the following Lemmas.
\begin{lemma}\label{Theorem-2.5}
Assume that $\eta \geq 0$ and $b$ satisfying condition $(\rm SC)$. Then, for all $\lambda\in \R$, we have 
$$
\ker\left(i\la I-\AA\right)=\{0\}.
$$ 
\end{lemma}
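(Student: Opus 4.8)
The plan is to show that if $i\lambda$ is an eigenvalue of $\mathcal{A}$ with eigenvector $U=(u,v,y,z,\omega)\in D(\mathcal{A})$, then $U=0$. First I would use the dissipativity identity established in Proposition \ref{Theorem-2.2}: since $\Re\langle\mathcal{A}U,U\rangle_{\mathcal{H}}=-\gamma\kappa(\alpha)\int_{\R}(\xi^2+\eta)|\omega(\xi)|^2\,d\xi$, and for an eigenvector $\Re\langle\mathcal{A}U,U\rangle=\Re(i\lambda\|U\|^2)=0$, the damping term must vanish. This forces $(\xi^2+\eta)|\omega(\xi)|^2=0$ a.e., and hence $\omega\equiv 0$ (at least on $\{\xi^2+\eta>0\}$, which is all of $\R$ up to a null set). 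Feeding $\omega\equiv 0$ back into the boundary condition $u_x(1)+\gamma\kappa(\alpha)\int_{\R}\mu(\xi)\omega(\xi)\,d\xi=0$ from $D(\mathcal{A})$ yields the extra Neumann-type condition $u_x(1)=0$, and the defining relation $-(\xi^2+\eta)\omega+v(1)\mu(\xi)=i\lambda\omega$ together with $\omega\equiv0$ forces $v(1)=0$, i.e. $u(1)=0$ since $v=i\lambda u$ (when $\lambda\neq0$).

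Next I would write out the eigenvalue equations $\mathcal{A}U=i\lambda U$ componentwise. This gives $v=i\lambda u$, $z=i\lambda y$, and the coupled elliptic system
\begin{eqnarray*}
u_{xx}+\lambda^2 u - i\lambda b\, y &=& 0,\\
a\, y_{xx}+\lambda^2 y + i\lambda b\, u &=& 0,
\end{eqnarray*}
on $(0,1)$, subject to the boundary conditions $u(0)=0$, $y(0)=y(1)=0$, together with the two extra conditions $u(1)=0$ and $u_x(1)=0$ deduced above. The case $\lambda=0$ should be handled separately: there the system decouples into $u_{xx}=0$, $y_{xx}=0$ with the homogeneous boundary data, giving $u=y=0$ immediately.

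For $\lambda\neq 0$, the heart of the matter is that $u$ now satisfies an overdetermined set of conditions at $x=1$, namely $u(1)=u_x(1)=0$, on top of $u(0)=0$. The plan is to solve the $4\times4$ (or, after eliminating $y$, higher-order) constant-coefficient ODE system explicitly via its characteristic roots, impose the full set of boundary conditions, and show that the resulting homogeneous linear system for the constants admits only the trivial solution \emph{unless} $\lambda$ and $b$ satisfy a resonance relation. I expect this resonance relation to be exactly the negation of condition $(\mathrm{SC})$: decoupling the system (e.g. by diagonalizing the coupling, which produces effective wave numbers whose squares are roots of a quadratic in $\lambda^2$ involving $a$ and $b$) should lead to eigenfrequencies indexed by integers $k_1,k_2$ through the Dirichlet conditions $\sin(k_j\pi)=0$, and the extra condition $u(1)=u_x(1)=0$ is compatible with a nonzero solution precisely when $b^2$ hits the excluded value $\frac{(k_1^2-ak_2^2)(ak_1^2-k_2^2)}{(a+1)(k_1^2+k_2^2)}\pi^2$.

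The main obstacle will be the algebra of the overdetermined boundary value problem: carrying out the diagonalization of the $2\times2$ coupling, expressing the general solution through the (generically four distinct) characteristic exponents, and then reducing the determinant condition for nontrivial solutions to a clean form that manifestly coincides with the failure of $(\mathrm{SC})$. I would organize this by first treating the algebraically simpler decoupled variables, then translating the Dirichlet data into the integer quantization $k_1,k_2$, and finally checking that imposing $u_x(1)=0$ in addition to $u(1)=0$ is the exact mechanism that, under $(\mathrm{SC})$, kills the solution. Throughout, I would keep the case $\lambda=0$ and any degenerate cases (repeated characteristic roots, $a=1$ coincidences) separated out, since these require slightly different elementary arguments but all yield $U=0$.
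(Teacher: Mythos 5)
Your proposal follows essentially the same route as the paper's proof: dissipativity forces $\omega\equiv 0$, hence $u_x(1)=0$ and $v(1)=0$ (so $u(1)=0$ when $\lambda\neq 0$); the case $\lambda=0$ is handled directly; and for $\lambda\neq 0$ the paper likewise eliminates $y$ to obtain the fourth-order equation $a u_{xxxx}+(a+1)\lambda^2 u_{xx}+\lambda^2(\lambda^2-b^2)u=0$ with the overdetermined boundary conditions, whose characteristic roots must satisfy $\sinh(r_1)=\sinh(r_3)=0$ (i.e. $r_1=ik_1\pi$, $r_3=ik_2\pi$) for a nontrivial solution to survive, which produces exactly the excluded value of $b^2$ in condition ${\rm (SC)}$. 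The only cosmetic difference is that you propose diagonalizing the $2\times 2$ coupling rather than eliminating $y$ outright, but this yields the same characteristic exponents and the same quantization argument, so the two proofs coincide in substance.
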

\begin{proof}
Let $U\in D(\AA)$ and let $\la \in \R$, such that 
\begin{equation*}
\AA U=i\la U.
\end{equation*}
Equivalently, we have 
\begin{eqnarray}
v&=&i\la u,\label{eq-2.20}\\
u_{xx}-bz&=&i\la v,\label{eq-2.21}\\
z&=&i\la y,\label{eq-2.22}\\
ay_{xx}+bv&=&i\la z,\label{eq-2.23}\\
-(\xi^2+\eta)\omega(\xi)+v(1)\mu(\xi)&=&i\la \omega(\xi)\label{eq-2.24}.
\end{eqnarray}
Next, a straightforward computation gives 
\begin{equation*}
0=\Re\left<i\lambda U,U\right>_{\HH}=\Re\left<\AA U,U\right>_{\HH}=-\gamma\kappa(\alpha)\int_{\R}(\xi^2+\eta)|\omega(\xi)|^2d\xi.
\end{equation*}
Then,  we deduce that 
\begin{equation}\label{eq-2.25}
\omega=0\quad \text{a.e.\ in}\quad \R.
\end{equation}
It follows, from equations \eqref{eq-2.5}, \eqref{eq-2.24} and \eqref{eq-2.25}, that 
\begin{equation}\label{eq-2.26}
u_x(1)=0\quad \text{and}\quad v(1)=0.
\end{equation}
Substituting equations \eqref{eq-2.20}, \eqref{eq-2.22} in equations \eqref{eq-2.21}, \eqref{eq-2.23} and using equation \eqref{eq-2.26}, we get
\begin{eqnarray}
\la^2u+u_{xx}-i\la by&=&0,\label{eq-2.27}\\
\la^2y+ay_{xx}+i\la bu&=&0,\label{eq-2.28}\\
u(0)=\la u(1)=u_x(1)=y(0)=y(1)&=&0. \label{eq-2.29}
\end{eqnarray}
Like as \cite{Najdi-Thesis}, we distinguish two cases:\\[0.1in]
\textbf{Case 1}. If $\la =0$, by direct calculation, we deduce that $u=0,\ y=0$, and consequently $U=0$.\\[0.1in]
\textbf{Case 2}. If $\la \neq 0$, combining equations \eqref{eq-2.27}-\eqref{eq-2.29}, we get the following systems 
\begin{eqnarray}
au_{xxxx}+(a+1)\la^2 u_{xx}+\la^2(\la^2-b^2)u&=&0,\label{eq-2.30}\\
u(0)=u_{xx}(0)&=&0,\label{eq-2.31}\\
u(1)=u_{xx}(1)&=&0,\label{eq-2.32}\\
u_x(1)&=&0.\label{eq-2.33}
\end{eqnarray}
The solution $u$ of equation \eqref{eq-2.30} is given by $\displaystyle{u(x)=\sum_{j=1}^{4}c_je^{r_jx}}$, where $c_j\in \C$ and 
\begin{equation}\label{eq-2.34}
r_1=\sqrt{\frac{-\la^2(a+1)-\la\sqrt{\la^2(a-1)^2+4ab^2}}{2a}},\quad r_2=-r_1,
\end{equation}
\begin{equation}\label{eq-2.35}
r_3=\sqrt{\frac{-\la^2(a+1)+\la\sqrt{\la^2(a-1)^2+4ab^2}}{2a}},\quad r_4=-r_3.
\end{equation}
If $\la=\pm b$, then system \eqref{eq-2.30}-\eqref{eq-2.33} admits only the trivial solution, in this case the proof is complete.  Otherwise, if $\la \neq \pm b$, since $r_1^2-r_3^2\neq 0$, then using boundary conditions \eqref{eq-2.31}, we get 
$$
u(x)=2c_1\sinh(r_1x)+2c_3\sinh(r_3x).
$$ 
From the boundary conditions \eqref{eq-2.32}, we distinguish the following four cases:
\begin{enumerate}
\item[1.] $\sinh(r_1)\neq 0$ and $\sinh(r_3)\neq 0$. Since $r_1^2-r_3^2\neq 0$, then using boundary conditions \eqref{eq-2.32}, it is easy to see that $u(x)=0$. Consequently $U=0$.\\
\item[2.]  If  $\sinh(r_1)\neq0$ and $\sinh(r_3)=0$. Using boundary conditions \eqref{eq-2.32} we deduce $c_1=0$ and then $u(x)=2c_3\sinh(r_3x)$. Finally, using boundary condition \eqref{eq-2.33}, we get $u(x)=0$. Consequently $U=0$.\\
\item[3.]  If $\sinh(r_1)=0$ and $\sinh(r_3)\neq 0$. By a same argument as in previous case, we get $u(x)=0$ and consequently $U=0$.\\
\item[4.] If $\sinh(r_1)=0$ and $\sinh(r_3)=0$. It follows that 
\begin{equation}\label{eq-2.36}
r_1=i k_1\pi\quad \text{and}\quad r_3=i k_2\pi,\quad \text{where}\  k_1, k_2\in \Z.
\end{equation}
Inserting equation \eqref{eq-2.36} into equations \eqref{eq-2.34} and \eqref{eq-2.35} respectively, we get
\begin{equation}\label{eq-2.37}
\la^2(a+1)+\la\sqrt{\la^2(a-1)^2+4ab^2}=2a k_1^2\pi^2
\end{equation}
and 
\begin{equation}\label{eq-2.38}
\la^2(a+1)-\la\sqrt{\la^2(a-1)^2+4ab^2}=2a k_2^2\pi^2.
\end{equation}
By adding equations \eqref{eq-2.37} and \eqref{eq-2.38}, we obtain 
\begin{equation}\label{eq-2.39}
\la^2=\frac{a}{a+1}\left( k_1^2+ k_2^2\right)\pi^2.
\end{equation}
Subtracting   \eqref{eq-2.37} from \eqref{eq-2.38}, we obtain
\begin{equation}\label{eq-2.40}
\la\sqrt{\la^2(a-1)^2+4ab^2}=a\left( k_1^2- k_2^2\right)\pi^2.
\end{equation}
Inserting \eqref{eq-2.40} in \eqref{eq-2.39}, we get 
\begin{equation}\label{eq-2.41}
b^2=\frac{\left( k_1^2-a k_2^2\right)\left(a k_1^2- k_2^2\right)}{(a+1)\left( k_1^2+ k_2^2\right)}\pi^2.
\end{equation}
This contradicts ${\rm (SC)}$.
\end{enumerate}
Conversely, if \eqref{eq-2.41} holds, then $i\la$ $\left(\text{where }\la\text{ is given in \eqref{eq-2.39}}\right)$ is an eigenvalue of $\AA$ with the corresponding eigenvector $$U=\left(u,i\la u,-\frac{i}{\la b}\left(\la^2u+u_{xx}\right),\frac{1}{b}\left(\la^2 u+u_{xx}\right),0\right),$$ such that
$$
u(x)=2i\frac{ k_2\left(a k_1^2- k_2^2\right)}{ k_1\left( k_1^2-a k_2^2\right)}\sin\left( k_1\pi x\right)+2i\sin\left( k_2 \pi x\right).
$$
Consequently, if \eqref{eq-2.41} does not hold, then $i\la$ is not an eigenvalue of $\AA$. The proof is thus complete.
\end{proof}
\begin{lemma}\label{Theorem-2.6}
Assume that $\eta=0$. Then, the operator $-\AA$ is not invertible and consequently $0\in \sigma(\AA)$.
\end{lemma}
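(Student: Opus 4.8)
The plan is to show that $\mathcal{A}$ fails to be surjective onto $\mathcal{H}$; since $\mathcal{A}$ is closed (it generates a $C_0$-semigroup by Proposition \ref{Theorem-2.2}), this already yields $0\in\sigma(\mathcal{A})$, and equivalently that $-\mathcal{A}$ is not invertible. I would first note that the obstruction cannot come from the kernel: solving $\mathcal{A}U=0$ forces $v=z=0$, hence $u_{xx}=0$, $y_{xx}=0$, and (with $\eta=0$) $-\xi^2\omega=0$ so $\omega=0$ a.e.; the boundary conditions $u(0)=u_x(1)=0$ and $y(0)=y(1)=0$ then give $u=y=0$. Thus $\mathcal{A}$ is injective, and the failure of invertibility must be located in the range.

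The key observation is that when $\eta=0$ the last line of $\mathcal{A}U=G$ reads $-\xi^2\omega(\xi)+v(1)\mu(\xi)=g_5(\xi)$, which is exactly the source equation of Proposition \ref{Theorem-2.2} but with the stabilizing shift $1+\eta$ removed from the denominator. Solving gives $\omega(\xi)=\bigl(v(1)\mu(\xi)-g_5(\xi)\bigr)/\xi^2$, and near $\xi=0$ the term $v(1)\mu(\xi)/\xi^2$ behaves like $v(1)|\xi|^{(2\alpha-5)/2}$. Since $\alpha\in\,]0,1[$ we have $2\alpha-5<-1$, so this function fails to lie in $L^2$ in a neighbourhood of the origin whenever $v(1)\neq 0$. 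This is precisely the integrability that the factor $1+\eta+\xi^2$ rescued in the resolvent computation of Proposition \ref{Theorem-2.2} and that now breaks down; it is also where the hypothesis $\eta=0$ is used in an essential way, since for $\eta>0$ the denominator $\xi^2+\eta$ removes the singularity and $\omega$ returns to $L^2$.

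Concretely, I would exhibit an explicit element outside the range: take $G=(g_1,0,0,0,0)\in\mathcal{H}$ with $g_1\in H_L^1(0,1)$ chosen so that $g_1(1)\neq 0$, for instance $g_1(x)=x$. The first component of $\mathcal{A}U=G$ forces $v=g_1$, hence $v(1)=g_1(1)\neq 0$, and the last component then forces $\omega(\xi)=v(1)\mu(\xi)/\xi^2$. By the estimate above $\omega\notin L^2(\R)$, so no $U\in\mathcal{H}$, a fortiori no $U\in D(\mathcal{A})$, can satisfy $\mathcal{A}U=G$. Therefore $G\notin\mathrm{Ran}(\mathcal{A})$, the operator $\mathcal{A}$ is not surjective, and consequently $-\mathcal{A}$ is not invertible and $0\in\sigma(\mathcal{A})$.

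The one point requiring care, which I regard as the only real obstacle to stating cleanly, is to argue that every prospective solution is \emph{forced} to have $v(1)\neq 0$, so that the singularity at $\xi=0$ cannot be cancelled by any admissible choice of the remaining unknowns. This hinges on $v$ being completely determined by the first component ($v=g_1$), together with the continuity of the trace map $v\mapsto v(1)$ on $H_L^1(0,1)$; since $\omega$ appears in equation five alone and is thereby pinned down once $v(1)$ is known, there is no freedom left to restore the $L^2$ integrability, and the contradiction is unavoidable.
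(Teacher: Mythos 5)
Your proposal is correct and takes essentially the same approach as the paper: the paper chooses the right-hand side $(\sin x,0,0,0,0)$, deduces $v=-\sin x$ so $v(1)=-\sin(1)\neq 0$, and concludes that the forced $\omega(\xi)=\sin(1)\,|\xi|^{\frac{2\alpha-5}{2}}\notin L^2(\R)$, exactly your argument with $g_1(x)=x$ in place of $\sin x$. Your added injectivity check and the trace-continuity remark are correct but not needed for the conclusion.
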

\begin{proof}
Set $F=\left(\sin(x),0,0,0,0\right)\in \HH$ and assume that there exists $U=(u,v,y,z,\omega)\in D(\AA)$ such that 
$$
-\AA U= F.
$$ 
It follows that 
\begin{equation}\label{eq-2.42}
v=-\sin(x)\ \ \text{in}\ \ (0,1)\ \ \ \text{and}\ \ \ \xi^2\omega+\sin(1)\mu(\xi)=0.
\end{equation}
From equation \eqref{eq-2.42}, we deduce that $\omega(\xi)=\xi^{\frac{2\alpha-5}{2}}\sin(1)\notin L^2(\R)$, therefore the assumption of the existence of $U$ is false and consequently, the operator $-\AA$ is not invertible.  The proof is thus complete.
\end{proof}
\begin{lemma}\label{Theorem-2.7}
Assume $b$ satisfies condition ${\rm (SC)}$ and  assume that {\rm(}$\eta>0$ and $\lambda\in \R${\rm)} or {\rm(}$\eta=0$ and $\lambda\in \R^{\ast}${\rm)}. Then, for any $f=(h,g)\in (L^2(0,1))^2$, the following problem 
	\begin{equation}\label{eq-2.43}
	\left\{\begin{array}{lllll}
	\la^2u+u_{xx}-i\la by&=&h,&x\in (0,1),
\\ \noalign{\medskip}
	\la^2y+ay_{xx}+i\la bu&=&g,&x\in (0,1),
\\ \noalign{\medskip}
	u(0)&=&0,
\\ \noalign{\medskip}
	y(0)=y(1)&=&0,
\\ \noalign{\medskip}
	u_x(1)+\left(\la^2c_1(\la,\eta,\alpha)+i\la c_2(\la,\eta,\alpha)\right)u(1)&=&0,
	\end{array}
	\right.
	\end{equation}
	admits a unique strong solution $(u,y)\in \left(H^2(0,1)\cap H_{L}^1(0,1)\right)\times \left(H^2(0,1)\cap H_0^1(0,1)\right)$, where 
	\begin{equation}\label{eq-2.44}
	c_1(\la,\eta,\alpha)=\gamma\kappa(\alpha)\int_{\R}\frac{\mu^2(\xi)}{\la^2+(\xi^2+\eta)^2}d\xi\quad \text{and}\quad c_2(\la,\eta,\alpha)=\gamma\kappa(\alpha)\int_{\R}\frac{\mu^2(\xi)(\xi^2+\eta)}{\la^2+(\xi^2+\eta)^2}d\xi.
	\end{equation}
\end{lemma}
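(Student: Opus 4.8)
The plan is to recast \eqref{eq-2.43} as a variational problem on the Hilbert space $V=H^1_L(0,1)\times H^1_0(0,1)$ and to solve it by the Fredholm alternative, thereby reducing existence to uniqueness; uniqueness will in turn follow from the dissipation encoded in the boundary condition together with the spectral analysis already carried out in the proof of Lemma \ref{Theorem-2.5} under hypothesis ${\rm (SC)}$. The degenerate case $\lambda=0$, which can occur only when $\eta>0$, I would treat separately and directly: there the boundary term and the coupling terms vanish and \eqref{eq-2.43} decouples into the two mixed elliptic problems $u_{xx}=h$ with $u(0)=u_x(1)=0$ and $ay_{xx}=g$ with $y(0)=y(1)=0$, each governed by a coercive form and hence uniquely solvable by Lax--Milgram. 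So from here on I assume $\lambda\neq 0$.

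Multiplying the first equation of \eqref{eq-2.43} by a test function $\bar\phi\in H^1_L(0,1)$ and the second by $\bar\psi\in H^1_0(0,1)$, integrating by parts on $(0,1)$ and inserting the boundary relation $u_x(1)=-(\lambda^2 c_1+i\lambda c_2)u(1)$, I obtain a bounded sesquilinear form $\mathcal{B}$ on $V$. Its principal part $\mathcal{B}_0\big((u,y),(\phi,\psi)\big)=\int_0^1 u_x\bar\phi_x\,dx+a\int_0^1 y_x\bar\psi_x\,dx$ is coercive on $V$ by Poincaré's inequality, so by Lax--Milgram it represents an isomorphism of $V$. The remainder $\mathcal{B}_1=\mathcal{B}-\mathcal{B}_0$ collects the zeroth-order terms $-\lambda^2\int u\bar\phi$, $-\lambda^2\int y\bar\psi$, the coupling terms $\pm\, i\lambda b\int(\cdots)$, and the boundary term $(\lambda^2 c_1+i\lambda c_2)u(1)\overline{\phi(1)}$. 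Each of these defines a compact operator on $V$: the volume terms through the Rellich embedding $H^1(0,1)\hookrightarrow\hookrightarrow L^2(0,1)$, and the boundary term because the functional $u\mapsto u(1)$ is continuous on $H^1(0,1)$, so its associated operator is of rank one. Hence the operator representing $\mathcal{B}$ is a compact perturbation of an isomorphism, i.e.\ Fredholm of index zero, and surjectivity is equivalent to injectivity.

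For injectivity I set $h=g=0$ and test with $(\phi,\psi)=(u,y)$. A short computation shows that the two coupling contributions combine, via $\int u\bar y-\int y\bar u=2i\,\Im\!\int u\bar y$, into a real quantity, so that taking the imaginary part of the resulting identity leaves only $\lambda\,c_2\,|u(1)|^2=0$. Since $\lambda\neq 0$ and $c_2=c_2(\lambda,\eta,\alpha)>0$ by \eqref{eq-2.44}, this forces $u(1)=0$, whereupon the boundary condition gives $u_x(1)=0$ as well. Consequently $(u,y)$ solves the homogeneous system \eqref{eq-2.27}--\eqref{eq-2.28} subject to $u(0)=u(1)=u_x(1)=y(0)=y(1)=0$, which is exactly the eigenvalue problem \eqref{eq-2.27}--\eqref{eq-2.29} treated in Lemma \ref{Theorem-2.5}. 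Passing to the fourth-order equation \eqref{eq-2.30} with conditions \eqref{eq-2.31}--\eqref{eq-2.33} and running through the four sub-cases on $\sinh(r_1)$ and $\sinh(r_3)$, hypothesis ${\rm (SC)}$ excludes the only nontrivial alternative \eqref{eq-2.41}; hence $u\equiv 0$, and then \eqref{eq-2.27} yields $y\equiv 0$. By the Fredholm alternative this gives, for every $(h,g)\in (L^2(0,1))^2$, a unique weak solution $(u,y)\in V$.

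Finally I would upgrade this weak solution to a strong one by elliptic regularity: rewriting the equations as $u_{xx}=h-\lambda^2 u+i\lambda b y\in L^2(0,1)$ and $ay_{xx}=g-\lambda^2 y-i\lambda b u\in L^2(0,1)$ gives $u,y\in H^2(0,1)$, and the boundary conditions, including $u_x(1)+(\lambda^2 c_1+i\lambda c_2)u(1)=0$, are recovered in the usual way, so that $(u,y)\in\big(H^2\cap H^1_L\big)\times\big(H^2\cap H^1_0\big)$ as claimed. I expect the only delicate points to be the verification that the boundary trace term genuinely produces a compact perturbation, so that the Fredholm alternative applies, and the clean reduction of the homogeneous problem—once $u(1)=u_x(1)=0$—to the precise setting of Lemma \ref{Theorem-2.5}, so that condition ${\rm (SC)}$ can be invoked verbatim; the coercivity of $\mathcal{B}_0$ and the regularity bootstrap are routine.
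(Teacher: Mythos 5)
Your proof is correct, and it follows the same broad strategy as the paper --- reduce existence to uniqueness via a compactness/Fredholm argument, then kill the kernel using condition ${\rm (SC)}$ through the computation of Lemma \ref{Theorem-2.5} --- but the technical splitting is genuinely different, and arguably cleaner. The paper introduces the unbounded operator $\mathcal{L}\mathcal{U}=(-u_{xx}+i\la by,\,-ay_{xx}-i\la bu)$ with the dissipative boundary condition built into $D(\mathcal{L})$, asserts that Lax--Milgram makes the auxiliary problem \eqref{eq-2.45} uniquely solvable, deduces that $\mathcal{L}^{-1}$ is compact, rewrites \eqref{eq-2.43} as $\left(\la^2\mathcal{L}^{-1}-I\right)\mathcal{U}=\mathcal{L}^{-1}f$, and invokes the Fredholm alternative. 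You instead keep only the decoupled principal part $\int_0^1 u_x\bar\phi_x\,dx+a\int_0^1 y_x\bar\psi_x\,dx$ as the coercive isomorphism and push \emph{everything} else --- the $\la^2$ terms, the velocity coupling $\pm i\la b$, and the rank-one boundary trace term --- into the compact perturbation. This buys you something real: the sesquilinear form attached to the paper's $\mathcal{L}$ has real part $\|u_x\|^2+a\|y_x\|^2+\la^2c_1|u(1)|^2-2\la b\,\Im\int_0^1 y\bar u\,dx$, which is \emph{not} coercive once $|\la b|$ is large, so the paper's one-line appeal to Lax--Milgram for \eqref{eq-2.45} is delicate (its operator is in fact invertible, but proving that requires a Fredholm-type argument of its own), whereas your decomposition never needs coercivity of the coupled form. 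You also make explicit the step the paper leaves implicit in the phrase ``by the same computation used in Lemma \ref{Theorem-2.5}'': in the homogeneous problem the boundary condition is the single relation $u_x(1)+(\la^2c_1+i\la c_2)u(1)=0$, and one must first take the imaginary part of the energy identity and use $\la\neq 0$, $c_2>0$ to extract $u(1)=0$, hence $u_x(1)=0$, before the kernel problem matches the setting \eqref{eq-2.27}--\eqref{eq-2.29} where ${\rm (SC)}$ applies. Your treatment of $\la=0$ (only possible when $\eta>0$, where the system decouples) and the final $H^2$ bootstrap coincide with the paper's.
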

\begin{rem}\label{Theorem-2.8}
Using the definition of $\mu$ and the fact that $\alpha\in ]0,1[$, $\eta\geq 0$, we deduce that the coefficients $c_1(\la,\eta,\alpha)$ and $c_2(\la,\eta,\alpha)$ are well defined.
\end{rem}
\begin{proof}
In the case $\eta>0$ and $\la=0$, by applying  Lax-Milligram Theorem, it is easy to see that  problem \eqref{eq-2.43} admits a unique strong solution $(u,y)\in \left(H^2(0,1)\cap H_{L}^1(0,1)\right)\times \left(H^2(0,1)\cap H_0^1(0,1)\right)$. So, we study the existence of strong solution of \eqref{eq-2.43} in the case $\eta\geq 0\ \text{and}\ \la\in \R^{\ast}$.  To this aim, we define the following linear unbounded operator ${\mathcal{L}}$ by 
$$
D(\mathcal{L})=\left\{(u,y)\in \left(H^2(0,1)\cap H_L^1(0,1)\right)\times \left(H^2(0,1)\cap H_0^1(0,1)\right);\quad u_x(1)+\left(\la^2c_1+i\la c_2\right)u(1)=0\right\}
$$
and 
$$
\mathcal{L}\mathcal{U}=\left(-u_{xx}+i\la by, -ay_{xx}-i\la bu\right),\qquad \forall\    \mathcal{U}=(u,y)\in D({\mathcal{L}}).
$$
Let us consider the following problem 
	\begin{equation}\label{eq-2.45}
	\left\{\begin{array}{lllll}
	-u_{xx}+i\la by&=&h,&x\in(0,1),
\\ \noalign{\medskip}
	-ay_{xx}-i\la bu&=&g,&x\in(0,1),
\\ \noalign{\medskip}
	u(0)&=&0,
\\ \noalign{\medskip}
	y(0)=y(1)&=&0,
\\ \noalign{\medskip}
	u_x(1)+\left(\la^2c_1(\la,\eta,\alpha)+i\la c_2(\la,\eta,\alpha)\right)u(1)&=&0.
	\end{array}
	\right.
	\end{equation}
Tanks to Lax-Milligram Theorem, it is easy to see  that \eqref{eq-2.45} has a unique strong solution $(u,y)\in D({\mathcal{L}})$. In addition, we have 
\begin{equation*}
\|(u,y)\|_{H^2(0,1)\times H^2(0,1)}\leq c\|(h,g)\|_{L^2(0,1)\times L^2(0,1)}.
\end{equation*}
It follows, from the above inequality and the compactness of the embeddings $H_L^1(0,1)\times H_0^1(0,1)$ into $L^2(0,1)\times L^2(0,1)$, that the inverse operator $\mathcal{L}^{-1}$ is compact in $L^2(0,1)\times L^2(0,1)$. Then applying $\mathcal{L}^{-1}$ to \eqref{eq-2.43}, we get  
    \begin{equation*}
    	\left(\la^2\mathcal{L}^{-1}-I\right)\mathcal{U}=\mathcal{L}^{-1}f,\quad \text{where}\quad \mathcal{U}=(u,y)\ \text{and}\ f=(h,g).
    \end{equation*}
On the other hand, since $b$ satisfies condition ${\rm (SC)}$, by  same computation used in Lemma \ref{Theorem-2.5}, we show that $\ker\left(\la^2\mathcal{L}^{-1}-I\right)=\{0\}$. Then, following Fredholm's alternative,  system \eqref{eq-2.43} admits a unique solution.  The proof is thus complete.
\end{proof}
\begin{lemma}\label{Theorem-2.9}
If $\eta>0$, then for all $\la\in\R$, we have
$$
R(i\la I-\mathcal{A})=\mathcal{H}.
$$
If $\eta=0$, then for all $\la\in \R^{\ast}$, we have 
$$
R(i\la I-\mathcal{A})=\mathcal{H}.
$$
\end{lemma}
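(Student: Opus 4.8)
The plan is to prove surjectivity directly: given $F=(f_1,f_2,f_3,f_4,f_5)\in\HH$, I would construct $U=(u,v,y,z,\omega)\in D(\AA)$ solving $(i\la I-\AA)U=F$. Writing this equation componentwise and using the definition of $\AA$, the first and third lines give $v=i\la u-f_1$ and $z=i\la y-f_3$, while the fifth line yields
\[
\omega(\xi)=\frac{f_5(\xi)+v(1)\mu(\xi)}{i\la+\xi^2+\eta}.
\]
This is precisely where the hypotheses enter: the denominator $i\la+\xi^2+\eta$ vanishes for some real $\xi$ exactly when $\la=0$ and $\eta=0$ (at $\xi=0$), which is excluded in both cases of the statement; under $\eta>0$, or $\eta=0$ and $\la\neq0$, it stays bounded away from zero, so $\omega$ is well defined. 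This is the same obstruction that forces $0\in\sigma(\AA)$ when $\eta=0$, cf. Lemma \ref{Theorem-2.6}.

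Next I would substitute $v$ and $z$ into the second and fourth lines to reduce everything to an elliptic system for $(u,y)$ of the form \eqref{eq-2.43}, namely $\la^2u+u_{xx}-i\la by=h$ and $\la^2y+ay_{xx}+i\la bu=g$ with $h=-(f_2+i\la f_1+bf_3)$ and $g=bf_1-f_4-i\la f_3$, both in $L^2(0,1)$. For the boundary term, inserting the formula for $\omega$ into \eqref{eq-2.5} and splitting $\tfrac{1}{i\la+\xi^2+\eta}$ into real and imaginary parts identifies exactly the coefficients $c_1,c_2$ of \eqref{eq-2.44}, turning the fractional boundary condition into $u_x(1)+(\la^2c_1+i\la c_2)u(1)=\beta$, where $\beta=f_1(1)(c_2-i\la c_1)-\gamma\kappa(\alpha)\int_{\R}\mu(\xi)f_5(\xi)/(i\la+\xi^2+\eta)\,d\xi$ is a finite constant (finiteness by Cauchy--Schwarz, since $\mu/(i\la+\xi^2+\eta)\in L^2(\R)$ for $\alpha\in\,]0,1[$). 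This is problem \eqref{eq-2.43} up to the inhomogeneous datum $\beta$, which I would remove by subtracting the explicit lift $p(x)=\beta x/(1+\la^2c_1+i\la c_2)$: its denominator has real part $1+\la^2c_1>0$, hence is nonzero, and $p$ absorbs $\beta$ while changing $h,g$ only by $L^2$ (indeed smooth) terms. Lemma \ref{Theorem-2.7} then delivers a unique $(u,y)\in(H^2\cap H_L^1)\times(H^2\cap H_0^1)$.

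Finally I would set $v=i\la u-f_1$, $z=i\la y-f_3$ and $\omega$ as above, and verify $U\in D(\AA)$. The only nonroutine point is the decay of $\omega$: for large $|\xi|$ the term $v(1)\mu(\xi)/(i\la+\xi^2+\eta)$ behaves like $|\xi|^{(2\alpha-1)/2-2}$ and the $f_5$-part like $f_5(\xi)/\xi^2$, so both $\omega$ and $|\xi|\omega$ lie in $L^2(\R)$; the borderline requirement $|\xi|\omega\in L^2$ amounts to $\int|\xi|^{2\alpha-3}\,d\xi<\infty$ at infinity, which holds exactly because $\alpha<1$, while near $\xi=0$ the denominator is bounded below. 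Moreover $-(\xi^2+\eta)\omega+v(1)\mu=i\la\omega-f_5\in L^2(\R)$, and the boundary condition \eqref{eq-2.5} holds by construction, so $U\in D(\AA)$ and $(i\la I-\AA)U=F$, giving $R(i\la I-\AA)=\HH$. I expect the main obstacle to be the bookkeeping in the boundary reduction together with the sharp $\alpha<1$ decay check for $|\xi|\omega$; everything else reuses the Lax--Milgram/Fredholm machinery already set up for Lemmas \ref{Theorem-2.5} and \ref{Theorem-2.7}.
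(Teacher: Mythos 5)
Your proposal is correct and follows essentially the same route as the paper: eliminate $v$, $z$, $\omega$, reduce to the boundary value problem of Lemma \ref{Theorem-2.7}, lift the inhomogeneous boundary datum so that Lemma \ref{Theorem-2.7} applies, and then reconstruct $v,z,\omega$ and check membership in $D(\mathcal{A})$. The only difference is cosmetic: the paper removes the boundary datum in two steps (an auxiliary coupled system $(\varphi,\psi)$ absorbing the $f_5$-part $I_1+I_2$, then a function $\theta$ with $\theta(1)=0$, $\theta_x(1)=\mathcal{Y}$), whereas you use the single explicit lift $p(x)=\beta x/(1+\la^2c_1+i\la c_2)$, legitimate since $c_1\geq 0$ gives $\Re\left(1+\la^2c_1+i\la c_2\right)\geq 1$; both liftings are valid and lead to the same application of Lemma \ref{Theorem-2.7}.
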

\begin{proof}
We distinguish two cases:  $\eta>0$ and $\eta=0$. Since the argument is entirely similar for the two cases, we only provide one of them. Assume that $\eta>0$ and let $\la\in \R$.  Set $F=(f_1,f_2,f_3,f_4,f_5) \in \mathcal{H}$, we look for $U=(u,v,y,z,\omega) \in D(\mathcal{A})$ solution of 
			\begin{equation}\label{eq-2.46}
			(i\la I-\mathcal{A})U=F.
			\end{equation}
			Equivalently, we have 
			\begin{equation*}
			\left\{
			\begin{array}{ll}
			
			\displaystyle{v=i\la u-f_1}, \\ \noalign{\medskip}
			\displaystyle{z=i\la y-f_3}, \\ \noalign{\medskip}
			\displaystyle{i\la v-u_{xx}+bz=f_2}, \\ \noalign{\medskip}
			\displaystyle{i\la z-ay_{xx}-bv=f_4}, \\ \noalign{\medskip}
			\displaystyle{(i\la+\xi^2+\eta)\omega-v(1)\mu(\xi)=f_5}, \\ \noalign{\medskip}
			\displaystyle{u(0)=y(0)=y(1)=0,\ u_x(1)+\gamma\kappa(\alpha)\int_{\R}\mu(\xi)\omega(\xi)d\xi=0}.
			
			\end{array}
			\right.
			\end{equation*}
			By eliminating $v$ and $\omega$ from the above system, we get the following system 
			\begin{equation}\label{eq-2.47}
			\left\{
			\begin{array}{ll}

			\displaystyle{\la^2u+u_{xx}-i\la by=-f_2-i\la f_1-bf_3}, \\ \noalign{\medskip}
			\displaystyle{\la^2y+ay_{xx}+i\la bu=-f_4-i\la f_3+bf_1}, \\ \noalign{\medskip}
			\displaystyle{u(0)=y(0)=y(1)=0},\\ \noalign{\medskip}
			\displaystyle{u_{x}(1)+(\la^2c_1(\la,\eta,\alpha)+i\la c_2(\la,\eta,\alpha))u(1)=-i\la c_1f_1(1)+c_2f_1(1)+I_1(\la,\eta,\alpha)+I_2(\la,\eta,\alpha)}.
			\end{array}
			\right.
			\end{equation}
			where $c_1(\la,\eta,\alpha)$ and $c_2(\la,\eta,\alpha)$ are defined in \eqref{eq-2.44} and $I_1(\la,\eta,\alpha),\ I_2(\la,\eta,\alpha)$ are given by 
			$$
			I_1(\la,\eta,\alpha)=i\la \gamma\kappa(\alpha)\int_{\R}\frac{f_5(\xi)\mu(\xi)}{\la^2+(\xi^2+\eta)^2}d\xi\quad \text{and}\quad I_2(\la,\eta,\alpha)=-\gamma\kappa(\alpha)\int_{\R}\frac{f_5(\xi)\mu(\xi)(\xi^2+\eta)}{\la^2+(\xi^2+\eta)^2}d\xi.
			$$
Using the definition of $\mu$ and the fact that $f_5\in L^2(\R)$, $\eta\geq0$ and $\alpha\in ]0,1[$, we deduce that the integrals $I_1(\la,\eta,\alpha)$ and $I_2(\la,\eta,\alpha)$ are well defined. Now, let $(\varphi,\psi)\in \left(H^2(0,1)\cap H_L^1(0,1)\right)\times \left(H^2(0,1)\cap H_0^1(0,1)\right)$ be the strong solution of  
			\begin{equation}\label{eq-2.48}
			\left\{\begin{array}{lllll}
		    -\varphi_{xx}+i\la b\psi&=&0,&x\in(0,1)
\\ \noalign{\medskip}
		    -\psi_{xx}-i\la b\varphi&=&0,&x\in(0,1)
\\ \noalign{\medskip}
		    \varphi(0)&=&0,
\\ \noalign{\medskip}
		    \psi(0)=\psi(1)&=&0,
\\ \noalign{\medskip}
		    \varphi_x(1)&=&I_1(\la,\eta,\alpha)+I_2(\la,\eta,\alpha).
			\end{array}
			\right.
			\end{equation}
Setting $\tilde{u}=u+\varphi$ and $\tilde{y}=y+\psi$. Then, from \eqref{eq-2.47} and \eqref{eq-2.48}, we get 
			\begin{equation}\label{eq-2.49}
			\left\{\begin{array}{lllll}
			\la^2\tilde{u}+\tilde{u}_{xx}-i\la b\tilde{y}&=&\la^2\varphi-f_2-i\la f_1-bf_3,\qquad x\in(0,1),
\\ \noalign{\medskip}
			\la^2\tilde{y}+a\tilde{y}_{xx}+i\la b\tilde{u}&=&\la^2\psi-f_4-i\la f_3+bf_1,\qquad x\in(0,1),
\\ \noalign{\medskip}
			\tilde{u}(0)&=&0,
\\ \noalign{\medskip}
			\tilde{y}(0)=\tilde{y}(1)&=&0,
\\ \noalign{\medskip}
			\tilde{u}_x(1)+(\la^2c_1(\la,\eta,\alpha)+i\la c_2(\la,\eta,\alpha))\tilde{u}(1)&=& \mathcal{Y}
			\end{array}
			\right.
			\end{equation}
			where
			$$\mathcal{Y}=-i\la c_1(\la,\eta,\alpha) f_1(1)+c_2(\la,\eta,\alpha) f_1(1)+(\la^2c_1(\la,\eta,\alpha)+i\la c_2(\la,\eta,\alpha))\varphi(1).$$
			Next, let $\theta\in H^2(0,1)\cap H_L^1(0,1)$, such that 
			$$
			\theta(1)=0,\quad \theta_x(1)=\mathcal{Y}.
			$$
Setting $\chi=\tilde{u}-\theta$. Then, from \eqref{eq-2.49}, we get
			\begin{equation}\label{eq-2.50}
			\left\{\begin{array}{lllll}
			\la^2\chi+\chi_{xx}-i\la b\tilde{y}&=&\la^2\varphi-\la^2\theta-\theta_{xx}-f_2-i\la f_1-bf_3,&x\in(0,1),
\\ \noalign{\medskip}
			\la^2\tilde{y}+a\tilde{y}_{xx}+i\la b\chi&=&\la^2\psi-i\la b\theta-f_4-i\la bf_3+bf_1,&x\in(0,1),
\\ \noalign{\medskip}
			\chi(0)&=&0,
\\ \noalign{\medskip}
			\tilde{y}(0)=\tilde{y}(1)&=&0,
\\ \noalign{\medskip}
			\chi_x(1)+\left(\la^2c_1+i\la c_2\right)\chi(1)&=&0.
			\end{array}
			\right.
			\end{equation}
			Using Lemma \ref{Theorem-2.7}, problem \eqref{eq-2.50} has a unique solution $(\chi,\tilde{y})\in \left(H^2(0,1)\cap H_L^1(0,1)\right)\times \left(H^2(0,1)\cap H_0^1(0,1)\right)$ and therefore problem \eqref{eq-2.47} has a unique solution $(u,y)\in \left(H^2(0,1)\cap H_L^1(0,1)\right)\times \left(H^2(0,1)\cap H_0^1(0,1)\right)$. So, defining $v=i\la u-f_1$, $z=i\la y-f_3$  and 
	        $$
		    \omega(\xi)=\frac{f_5(\xi)}{i\la +\xi^2+\eta}+\frac{i\la u(1)\mu(\xi)}{i\la+\xi^2+\eta}-\frac{f_1(1)\mu(\xi)}{i\la +\xi^2+\eta},
		    $$
we obtain $U=(u,v,y,z,\omega)\in D(\mathcal{A})$ solution of \eqref{eq-2.46}. The proof is thus complete.
\end{proof}
\noindent \textbf{Proof of Theorem \ref{Theorem-2.4}.} Following a general criteria of Arendt-Batty see \cite{arendt:88}, the $C_0-$semigroup of contractions $e^{t\AA}$ is strongly stable, if $\sigma\left(\AA\right)\cap i\R$ is countable and no eigenvalue of $\AA$ lies on the imaginary axis. First, using Lemma \ref{Theorem-2.5}, we directly deduce that $\AA$ has no pure imaginary eigenvalues. Next, using Lemmas \ref{Theorem-2.6}, \ref{Theorem-2.9} and with the help of the closed graph Theorem of Banach, we deduce that $\sigma(\AA)\cap i\R=\{\emptyset\}$ if $\eta>0$ and $\sigma(\AA)\cap i\R=\{0\}$ if $\eta=0$. Consequently, the $C_0-$semigroup $e^{t\AA}$ is strongly stable and the proof is thus complete.
\section{Non Uniform Stability}\label{Section-3}
In this section we show that uniform stability (i.e. exponential stability) does not hold even in the case $a=1$ and $b\notin \pi\Z$. This result is due to the fact that a sub-sequence of eigenvalues of $\AA$ is close to the imaginary axis.
\subsection{Non Uniform Stability in the case $\mathbf{a=1}$}\label{Section-3.1}
		\noindent In this part, we assume that $a=1$, $\eta >0$ and		\begin{equation}\tag{${\rm SC1 }$}
		b\neq \frac{\pi}{\sqrt{2}}\frac{k_1^2-k_2^2}{\sqrt{k_1^2+k_2^2}}.
		\end{equation}
		Our goal is to show that system \eqref{eq-2.1}-\eqref{eq-2.8} is not exponentially stable. This result is due to the fact that a subsequence of the eigenvalues of $\mathcal{A}$ is close to the imaginary axis.
\begin{theoreme}\label{Theorem-3.1}
			Assume that $a=1$ and that condition ${\rm (SC1)}$ holds. Then the semigroup of contractions $e^{t\AA}$ is not uniformly stable in the energy space $\HH$.
\end{theoreme}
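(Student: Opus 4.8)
The plan is to use the frequency-domain characterization of exponential stability of Huang and Pr\"uss: since $e^{t\AA}$ is a bounded $C_0$-semigroup on the Hilbert space $\HH$ with $i\R\subset\rho(\AA)$ (established in Section \ref{Section-2.2} when $\eta>0$), it is exponentially stable if and only if $\sup_{\beta\in\R}\|(i\beta I-\AA)^{-1}\|_{\LL(\HH)}<\infty$. Hence it suffices to produce a sequence of eigenvalues $(\la_n)\subset\sigma(\AA)$ with $|\Im\la_n|\to+\infty$ and $\Re\la_n\to 0^-$: if $\Phi_n$ is a corresponding unit eigenvector and $\beta_n:=\Im\la_n$, then $(i\beta_n I-\AA)\Phi_n=-(\Re\la_n)\Phi_n$, so $\|(i\beta_n I-\AA)^{-1}\|\ge|\Re\la_n|^{-1}\to+\infty$ and the uniform resolvent bound fails.

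First I would analyse the eigenvalue problem $\AA\Phi=\la\Phi$ for $\Phi=(u,v,y,z,\omega)$ and $\la\in\C$. Solving the $\omega$-component gives $\omega(\xi)=v(1)\mu(\xi)(\la+\xi^2+\eta)^{-1}$ with $v=\la u$; substituting into the boundary condition \eqref{eq-2.5} and using the explicit evaluation $\gamma\kappa(\alpha)\int_{\R}\mu^2(\xi)(\la+\eta+\xi^2)^{-1}d\xi=\gamma(\la+\eta)^{\alpha-1}$ reduces the dissipative boundary term to the effective Robin condition $u_x(1)+\gamma\la(\la+\eta)^{\alpha-1}u(1)=0$. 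With $a=1$, the interior equations decouple under the change of unknowns $w_\pm=u\pm iy$, which satisfy $w_{\pm,xx}=(\la^2\mp i b\la)w_\pm$. The conditions $u(0)=y(0)=0$ kill the even modes, and imposing $y(1)=0$ together with the effective condition yields, after eliminating the two constants, the characteristic equation
\begin{equation*}
r_-\sinh r_+\cosh r_-+r_+\sinh r_-\cosh r_++2\gamma\la(\la+\eta)^{\alpha-1}\sinh r_+\sinh r_-=0,\qquad r_\pm=\sqrt{\la^2\mp i b\la}.
\end{equation*}

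Next I would perform an asymptotic analysis for $|\la|\to\infty$. Writing $\la=i\omega$ one gets $r_\pm=i(\omega\mp b/2)+O(\omega^{-1})$, so the first two terms reduce to $-\omega\sin(2\omega)$ to leading order while the damping term is only $O(\omega^{\alpha})$, since $\gamma\la(\la+\eta)^{\alpha-1}\sim\gamma(i\omega)^{\alpha}$. Thus the unperturbed zeros sit at $\omega_n=n\pi/2$, and a regular perturbation $\omega=\omega_n+s_n$ about each simple zero of $\sin(2\omega)$ produces $s_n$ of order $\omega_n^{\alpha-1}$ with, along a suitable subsequence, a nonzero leading coefficient (proportional to $[\cos b-(-1)^n]$ in the relevant parity); condition ${\rm (SC1)}$ is what keeps the system in the strongly stable regime and guarantees that $\sinh r_+$ and $\sinh r_-$ do not vanish simultaneously, so the relevant zeros are simple and the expansion is legitimate. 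Tracking the real part then gives $\Re\la_n\sim -C\,\omega_n^{\alpha-1}\sin(\tfrac{\alpha\pi}{2})$ for a branch of eigenvalues, and since $\alpha\in\,]0,1[$ this is negative and tends to $0$ while $|\Im\la_n|\to\infty$; feeding this branch into the resolvent lower bound of the first paragraph finishes the argument.

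The main obstacle is turning the formal expansion into a rigorous existence statement. Two points need care: controlling the multivalued factor $(\la+\eta)^{\alpha-1}$ for $\la$ in a neighbourhood of the imaginary axis, and proving that the perturbed characteristic equation genuinely has a root close to each $i\omega_n$. I would handle the latter by a Rouch\'e argument on small disks centred at $in\pi/2$: because the damping term is of strictly lower order $O(\omega^{\alpha})$ than the $O(\omega)$ variation of the leading factor $-\omega\sin(2\omega)$ across such a disk, the two functions have the same number of zeros inside, which pins the eigenvalue to within $O(\omega_n^{\alpha-1})$ of the imaginary axis.
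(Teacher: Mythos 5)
Your proposal is correct, and its backbone is the same as the paper's: reduce the eigenvalue problem to a transcendental characteristic equation, localize its large roots near the purely imaginary zeros of the undamped part by Rouch\'e's theorem, and conclude from a branch of eigenvalues whose real parts tend to zero. Indeed, your characteristic equation is exactly $\la f(\la)=0$ with $f$ as in \eqref{eq-3.14} (your $r_-,r_+$ are the paper's $r_1,r_2$), and your parity factor $\cos b-(-1)^n$ matches the paper's coefficients $1-\cos b$ (branch near $in\pi$) and $1+\cos b$ (branch near $in\pi+\tfrac{i\pi}{2}$) in Proposition \ref{Theorem-3.2}. Three differences are worth recording. (i) You reach the characteristic equation by diagonalizing the coupled system with $w_\pm=u\pm iy$, getting two decoupled second-order ODEs coupled only through the boundary conditions, whereas the paper eliminates $u$ and works with the fourth-order problem \eqref{eq-3.12} for $y$ and a $2\times 2$ determinant; your route is cleaner and yields the same equation. (ii) You finish rigorously via the resolvent lower bound $\|(i\Im\la_n I-\AA)^{-1}\|\geq |\Re\la_n|^{-1}$ together with the Huang--Pr\"uss criterion, which is legitimate since $i\R\subset\rho(\AA)$ for $\eta>0$ was proved in Section \ref{Section-2.2}; the paper's passage from Proposition \ref{Theorem-3.2} to Theorem \ref{Theorem-3.1} is instead left informal (``the energy corresponding to the branches has no exponential decaying''), so your concluding step is actually tighter than the paper's. (iii) You need far less asymptotic precision than Proposition \ref{Theorem-3.2} provides: once the Rouch\'e argument on disks of radius $O(|\la_n|^{\alpha-1})$ pins one root near each $in\pi/2$, you already have $\Re\la_n\to 0$, and $\Re\la_n<0$ follows from dissipativity plus Lemma \ref{Theorem-2.5} under $({\rm SC1})$; hence your worry about selecting a subsequence with nonvanishing leading coefficient is superfluous for this theorem (it matters only for the optimality statement in Theorem \ref{Theorem-4.1}, which is where the paper's higher-order expansions, including the $b\in\pi\Z$ cases with real parts of order $n^{-(5-\alpha)}$, are genuinely used).
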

\noindent For the proof of Theorem \ref{Theorem-3.1}, we aim to show that an infinite number of eigenvalues of $\mathcal{A}$ approach the imaginary axis. Since $\mathcal{A}$ is dissipative, we fix $\alpha_0>0$ large enough and we study the asymptotic behavior of the large eigenvalues $\la$ of 
$\mathcal{A}$ in the strip 
$$S=\left\{\lambda \in \C:-\alpha_0\leq \Re(\la)\leq 0\right\}.$$ 
For the aim, let $\la\in \mathbb{C}^*$ be an eigenvalue of $\mathcal{A}$ and let $U=(u,\lambda u,y,\lambda y,\omega)\in D(\mathcal{A})$ be an associated eigenvector such $\|U\|_{\mathcal{H}}=1$. Then, we have
		\begin{eqnarray}
		\la^2u-u_{xx}+b\la y=0,\label{eq-3.1}\\
		\la^2y-y_{xx}-b\la u=0,\label{eq-3.2}\\
		\omega(\xi)=\frac{ |\xi|^{\alpha-\frac{1}{2}}}{\la+\xi^2+\eta} \la u(1),\label{eq-3.3}
		\end{eqnarray}
with the boundary conditions		
\begin{eqnarray}
y(0)=y(1)=u(0)=0,\label{eq-3.4}
\\
u_x(1)=-\gamma\kappa(\alpha)\int_{\mathbb{R}}|\xi|^{\alpha-\frac{1}{2}}\omega(\xi)d\xi.\label{eq-3.5}				
\end{eqnarray}	
First, from	\eqref{eq-3.2}, we have
\begin{equation}
			u=\frac{1}{b\la}\left(\la^2y-y_{xx}\right),\label{eq-3.6}
\end{equation}
inserting \eqref{eq-3.6}  in \eqref{eq-3.1}, we get
			\begin{equation}\label{eq-3.7}
			y_{xxxx}-2\la^2y_{xx}+\la^2(\la^2+b^2)y=0.
			\end{equation}
Now, from \eqref{eq-3.1} and \eqref{eq-3.4}, we get
\begin{equation}
y(0)=y(1)=y_{xx}(0)=0.\label{eq-3.8}
\end{equation}
Next, inserting \eqref{eq-3.3} in \eqref{eq-3.5}, we get
\begin{equation}\label{eq-3.9}
u_x(1)=-\gamma\kappa(\alpha) \la u(1)\int_{\mathbb{R}}\frac{ |\xi|^{\alpha-1}}{\la+\xi^2+\eta}  d\xi.
\end{equation}
On the other hand, from \eqref{eq-3.6}, we have
\begin{equation}
			u(1)=\frac{1}{b\la}\left(\la^2y(1)-y_{xx}(1)\right)\ \ \ \text{and}\ \ \ u_x(1)=\frac{1}{b\la}\left(\la^2y_x(1)-y_{xxx}(1)\right)\label{eq-3.10},
\end{equation}
then inserting 	\eqref{eq-3.10} in \eqref{eq-3.9} and using the fact that 	$\kappa(\alpha)\int_{\mathbb{R}}\frac{ |\xi|^{\alpha-1}}{\la+\xi^2+\eta}  d\xi	=(\la+\eta)^{\alpha-1}$, we get 
			\begin{equation}\label{eq-3.11}
			y_{xxx}(1)+\gamma\la(\la+\eta)^{\alpha-1}y_{xx}(1)-\la^2y_x(1)=0.
			\end{equation} 
Therefore, from 	\eqref{eq-3.7}, 	\eqref{eq-3.8} and \eqref{eq-3.11}, we get
\begin{equation}\label{eq-3.12}
\left\{
\begin{array}{ll}
\displaystyle{y_{xxxx}-2\la^2y_{xx}+\la^2(\la^2+b^2)y=0,}
\\[0.1in]
\displaystyle{y(0)=y_{xx}(0)=0,}
\\[0.1in]
\displaystyle{y(1)=0,\ y_{xxx}(1)+\gamma\la(\la+\eta)^{\alpha-1}y_{xx}(1)-\la^2y_x(1)=0.}
\end{array}
\right.
\end{equation}
The general solution of equation \eqref{eq-3.12} is given by 
			\begin{equation*}
			y(x)=c_1 \sinh(r_1 x)+c_2\sinh(r_2 x)+c_3\cosh(r_1 x)+c_4\cosh(r_2 x),
			\end{equation*}
			where $c_1,\ c_2,\ c_3,\ c_4\in\mathbb{C}$, and
\begin{equation}\label{eq-3.13}
r_1=\lambda \left(1+\frac{ib}{\la}\right)^{\frac{1}{2}},\qquad r_2=\lambda \left(1-\frac{ib}{\la}\right)^{\frac{1}{2}}.
\end{equation}
From the boundary conditions in \eqref{eq-3.12} at $x = 0$ and using the fact that $r_1^2\neq r_2^2$, we get $c_3=c_4=0$, therefore
\begin{equation*}
			y(x)=c_1 \sinh(r_1 x)+c_2\sinh(r_2 x).
			\end{equation*}
 Moreover, the boundary conditions in \eqref{eq-3.12} at $x=1$ can be expressed by  $$M \begin{pmatrix}
	c_1\\ c_2
\end{pmatrix}=0,$$ where
\begin{equation*}
	M=\begin{pmatrix}
	\sinh(r_1)& \sinh(r_2)\\ \noalign{\medskip}
	\left(r_1^2-\lambda^2\right)r_1 \cosh(r_1)+\gamma\la(\la+\eta)^{\alpha-1} r_1^2\sinh(r_1)& \left(r_2^2-\lambda^2\right)r_2\cosh(r_2)+\gamma\la(\la+\eta)^{\alpha-1} r_2^2\sinh(r_2)
	\end{pmatrix}.
\end{equation*}
Then the determinant of $M$ is given by
\begin{equation*}
det\left(M\right)=-i b\lambda^2\left(2 \gamma(\la+\eta)^{\alpha-1} \sinh(r_1)\sinh(r_2)+\frac{r_1}{\lambda}\cosh(r_1)\sinh(r_2) +\frac{r_2}{\lambda}\sinh(r_1) \cosh(r_2)\right).
	\end{equation*}
Equation \eqref{eq-3.12}  admits a non trivial solution if and only if $\displaystyle{det\left(M\right)}=0$; i.e., if and only if the eigenvalues of $\mathcal{A}$ are roots of the function $f$ defined by
\begin{equation}\label{eq-3.14}
f(\lambda)= 2\gamma(\la+\eta)^{\alpha-1} \sinh(r_1)\sinh(r_2)+\frac{r_1}{\lambda}\cosh(r_1)\sinh(r_2) +\frac{r_2}{\lambda}\sinh(r_1) \cosh(r_2).
\end{equation}
\begin{pro} \label{Theorem-3.2}
Assume that $a=1$ and that condition ${\rm (SC1)}$ holds. Then there exists $n_0\in \mathbb{N}$ sufficiently large  and two sequences $\left(\lambda_{1,n}\right)_{ |n|\geq n_0} $ and $\left(\lambda_{2,n}\right)_{ |n|\geq n_0} $ of simple roots of $f$ (that are also simple eigenvalues of $\mathcal{A}$) satisfying the following asymptotic behavior:\\[0.1in]
\textbf{Case 1.} If $b\neq k\pi$, $k\in\mathbb{Z}^*$, then
\begin{equation}\label{eq-3.15}
		\displaystyle{\lambda_{1,n}= i n\pi+{\frac {\gamma\, \left( 1-\cos \left( b \right)  \right)  \left( i\cos
 \left( \frac{\pi\alpha}{2} \right) -\sin \left( \frac{\pi\alpha}{2}
 \right)  \right) }{ 2\left( n\pi \right) ^{1-\alpha}}}+o \left( 
 \frac{1}{{n}^{1-\alpha}} \right) 
, \ \ \forall \ |n|\geq n_0}
\end{equation}
and
\begin{equation}\label{eq-3.16}
		\displaystyle{\lambda_{2,n}= i n\pi+\frac{i\pi}{2}}+{\frac {\gamma\, \left( 1+\cos \left( b \right)  \right)  \left( i\cos
 \left( \frac{\pi\alpha}{2} \right) -\sin \left( \frac{\pi\alpha}{2}
 \right)  \right) }{ 2\left( n\pi \right) ^{1-\alpha}}}+o \left( 
 \frac{1}{{n}^{1-\alpha}} \right) 
, \ \ \forall\  |n|\geq n_0.
\end{equation}
\textbf{Case 2.} If $b=2 k\pi$, $k\in\mathbb{Z}^*$, then
\begin{equation}\label{eq-3.17}
		\displaystyle{\lambda_{1,n}= i n\pi+{\frac {i{b}^{2}}{8n\pi}}+{\frac {7 i b^4}{128{\pi}^{
3}{n}^{3}}}+{\frac {\gamma\,{b}^{6} \left( i\cos \left(\frac{\pi\alpha}{2}  \right) -\sin \left( \frac{\pi\alpha}{2} \right)  \right) }{128\,{
\pi}^{5-\alpha}{n}^{5-\alpha}}}+O \left( \frac{1}{n^5}\right) 
, \ \ \forall \ |n|\geq n_0}
\end{equation}
and
\begin{equation}\label{eq-3.18}
		\displaystyle{\lambda_{2,n}= i n\pi+\frac{i\pi}{2}}+{\frac {\gamma \left( i\cos
 \left( \frac{\pi\alpha}{2} \right) -\sin \left( \frac{\pi\alpha}{2}
 \right)  \right) }{ \left( n\pi \right) ^{1-\alpha}}}+o \left( 
 \frac{1}{{n}^{1-\alpha}} \right) , \ \ \forall\  |n|\geq n_0.
\end{equation}
\textbf{Case 3.} If $b=(2 k+1)\pi$, $k\in\mathbb{Z}^*$, then
\begin{equation}\label{eq-3.19}
		\displaystyle{\lambda_{1,n}= i n\pi+{\frac {\gamma \left( i\cos
 \left( \frac{\pi\alpha}{2} \right) -\sin \left( \frac{\pi\alpha}{2}
 \right)  \right) }{ \left( n\pi \right) ^{1-\alpha}}}+o \left( 
 \frac{1}{{n}^{1-\alpha}} \right), \ \ \forall \ |n|\geq n_0}
\end{equation}
and
\begin{equation}\label{eq-3.20}
\begin{array}{ll}
		\displaystyle{\lambda_{2,n}= i n\pi+\frac{i\pi}{2}+{\frac {i{b}^{2}}{8n\pi}}-{\frac {i{b}^{2}}{16\pi {n}^{2}}}+{\frac 
{i{b}^{2} \left( 4{\pi}^{2}+7{b}^{2} \right) }{{128\pi
}^{3}{n}^{3}}}}
\\ \\ \hspace{1cm}
\displaystyle{-{\frac {i{b}^{2} \left( 4{\pi}^{2}+21
{b}^{2} \right) }{256{\pi}^{3}{n}^{4}}}+{\frac {\gamma {b}^{6} \left( 
i\cos \left(\frac{\pi\alpha}{2} \right) -\sin \left( \frac{\pi\alpha}{2}
 \right)  \right) }{256 {\pi}^{5-\alpha}{n}^{5-\alpha}}}+O
 \left(\frac{1}{n^5} \right) 
,\ \ \forall\ |n|\geq n_0.}
\end{array}
\end{equation}			
\end{pro}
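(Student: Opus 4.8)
The plan is to treat the characteristic function $f$ in \eqref{eq-3.14} as a perturbation of an explicit ``skeleton'' built from the two branches $r_1,r_2$ of \eqref{eq-3.13}. Working in the strip $S$, where $\Re(\la)$ stays bounded while $|\Im(\la)|\to\infty$, the ratio $b/\la$ is small, so I would first expand
\[
r_1=\la+\tfrac{ib}{2}+\tfrac{b^2}{8\la}+O(\la^{-3}),\qquad r_2=\la-\tfrac{ib}{2}+\tfrac{b^2}{8\la}+O(\la^{-3}),
\]
whence $r_1+r_2=2\la+\tfrac{b^2}{4\la}+O(\la^{-3})$ and $r_1-r_2=ib-\tfrac{ib^3}{8\la^2}+O(\la^{-4})$. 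Using $\cosh r_1\sinh r_2+\sinh r_1\cosh r_2=\sinh(r_1+r_2)$, $\cosh r_1\sinh r_2-\sinh r_1\cosh r_2=-\sinh(r_1-r_2)$ and $2\sinh r_1\sinh r_2=\cosh(r_1+r_2)-\cosh(r_1-r_2)$, I would rewrite $f$ as an explicit combination of $\sinh$ and $\cosh$ of $r_1\pm r_2$ with coefficients $\tfrac{r_1\pm r_2}{2\la}$ and $\gamma(\la+\eta)^{\alpha-1}$. Its dominant part is $\sinh(r_1+r_2)\sim\sinh(2\la)$, whose zeros sit at $\la=\tfrac{im\pi}{2}$, $m\in\Z$; splitting $m$ into even and odd integers produces exactly the two candidate families clustering near $in\pi$ and near $in\pi+\tfrac{i\pi}{2}$.

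Next I would make the localization rigorous by Rouch\'e's theorem. On circles of fixed small radius centred at $\tfrac{im\pi}{2}$ with $|m|$ large, the two perturbative contributions — the damping term of size $O(|\la|^{\alpha-1})$ and the term carrying $\tfrac{r_1-r_2}{2\la}\sinh(r_1-r_2)=O(|\la|^{-1})$ — are strictly dominated by the leading factor $\sinh(2\la)$, which is bounded below on these circles. Hence $f$ and its skeleton have the same number of zeros in each disk, namely one, giving for $|n|\ge n_0$ simple roots $\lambda_{1,n}$ near $in\pi$ and $\lambda_{2,n}$ near $in\pi+\tfrac{i\pi}{2}$. Simplicity as roots of $f$, together with the construction preceding the statement (a zero of $\det M$ yields a nontrivial $(c_1,c_2)$, hence a nontrivial solution of \eqref{eq-3.12} and an eigenvector via \eqref{eq-3.6}), shows these are also simple eigenvalues of $\mathcal{A}$.

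For the first-order asymptotics (Case 1), I would set $\la=\tfrac{ik\pi}{2}+\delta$ with $\delta\to0$ and insert into the rewritten $f$; the leading balance reads
\[
(-1)^k\big(2\delta+\tfrac{b^2}{4\la}\big)=-\gamma(\la+\eta)^{\alpha-1}\big[(-1)^k-\cos b\big]-\tfrac{b\sin b}{2\la}+o\big(|\la|^{\alpha-1}\big).
\]
For $k=2n$ this produces the factor $1-\cos b$ and for $k=2n+1$ the factor $1+\cos b$; since $\alpha\in\,]0,1[$ the damping term $|\la|^{\alpha-1}$ dominates the $O(|\la|^{-1})$ terms. Using $(in\pi)^{\alpha-1}=(n\pi)^{\alpha-1}\big(\sin\tfrac{\pi\alpha}{2}-i\cos\tfrac{\pi\alpha}{2}\big)$ then yields \eqref{eq-3.15} and \eqref{eq-3.16} when $b\notin\pi\Z$.

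The essential point, and the main obstacle, is the degenerate situation: when $b=2k\pi$ the coefficient $1-\cos b$ of the $\lambda_{1,n}$ correction vanishes, and when $b=(2k+1)\pi$ the coefficient $1+\cos b$ of the $\lambda_{2,n}$ correction vanishes, so the damping term no longer drives the leading correction and the genuine algebraic corrections in powers of $1/n$ (starting with $\tfrac{ib^2}{8n\pi}$) surface first. For these branches I would push the expansions of $r_1\pm r_2$, of $\sinh(r_1\pm r_2)$, $\cosh(r_1\pm r_2)$ and of $(\la+\eta)^{\alpha-1}=(in\pi)^{\alpha-1}\big(1+O(n^{-1})\big)$ through order $\la^{-4}$, solving $f(\la)=0$ by successive substitution until the first surviving damping contribution of size $n^{\alpha-5}$ appears, reproducing \eqref{eq-3.17}, \eqref{eq-3.18}, \eqref{eq-3.19} and \eqref{eq-3.20}. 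The difficulty is precisely this high-order bookkeeping — tracking every polynomial correction up to $O(n^{-5})$ while simultaneously controlling the fractional branch $(\la+\eta)^{\alpha-1}$ — since the cancellation of the first-order coefficient is what forces the anomalous exponent $\tfrac{2}{5-\alpha}$ announced in the introduction.
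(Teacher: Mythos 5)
Your proposal is correct and takes essentially the same route as the paper's own proof: your rewriting of $f$ as a combination of $\sinh(r_1\pm r_2)$ and $\cosh(r_1+r_2)-\cosh(r_1-r_2)$ with coefficients $\tfrac{r_1\pm r_2}{2\lambda}$ and $\gamma(\lambda+\eta)^{\alpha-1}$ is exactly the paper's estimate \eqref{eq-3.21}, the Rouch\'e localization near the zeros of $\sinh(2\lambda)$ (giving simple roots near $in\pi$ and $in\pi+\tfrac{i\pi}{2}$) is the paper's Step 2, and your first-order balance producing the factors $1-\cos b$ and $1+\cos b$, followed by successive substitution to higher order in the degenerate cases $b\in\pi\Z$ until the damping contribution of size $n^{\alpha-5}$ surfaces, is precisely what the paper carries out in Steps 3 and 4. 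The only slip is cosmetic: the expansion of $r_1$ alone has error $O(\lambda^{-2})$ (from the term $-\tfrac{ib^3}{16\lambda^2}$), not $O(\lambda^{-3})$, but these cubic terms cancel in $r_1+r_2$ and double in $r_1-r_2$ exactly as your displayed formulas state, so the argument is unaffected.
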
	
\begin{proof} The proof is divided into four steps.\\ [0.1in]
\noindent \textbf{Step 1.} In this step, we proof the following asymptotic behavior estimate 
			\begin{equation}\label{eq-3.21}
			\begin{array}{ll}
		\displaystyle{
		f(\lambda)=	{\frac {\gamma }{{\lambda}^{1-\alpha}}} \left( 1+O \left(\frac{1}{\lambda} \right)  \right) 
 \left( \cosh \left( r_1+r_2 \right) -\cosh \left( r_1-r_2\right)  \right)}
 \\ \\ \hspace{1cm}
\displaystyle{+ \left( 1+{
\frac {{b}^{2}}{8{\lambda}^{2}}}+O \left(\frac{1}{\lambda^3}\right) 
 \right) \sinh \left(r_1+r_2 \right)  - \left( {\frac {ib
}{2\lambda}}+O \left( \frac{1}{\lambda^3} \right)  \right) \sinh \left( r_1-r_2\right)}.
\end{array}
\end{equation}
From equation \eqref{eq-3.13} it follows that for $\lambda$ large enough, we have
\begin{equation}\label{eq-3.22}
\frac{r_1}{\lambda}=1+\frac{i b}{2\lambda}+\frac{b^2}{8\lambda^2}+O \left( \frac{1}{\lambda^3} \right) \ \ \ \text{and}\ \ \ \frac{r_2}{\lambda}=1-\frac{i b}{2\lambda}+\frac{b^2}{8\lambda^2}+O \left( \frac{1}{\lambda^3} \right). 
\end{equation}
Also, we have
\begin{equation}\label{eq-3.23}
\frac{1}{(\lambda+\eta)^{1-\alpha}}=\frac{1}{\lambda^{1-\alpha}}\left(1+O \left(\frac{1}{\lambda} \right)  \right). 
\end{equation}
Inserting \eqref{eq-3.22} and \eqref{eq-3.23} in \eqref{eq-3.14}, we get \eqref{eq-3.21}. \\[0.1in]
\noindent \textbf{Step 2.} First, from \eqref{eq-3.22} for $\la$ large enough, we have
\begin{equation}\label{eq-3.26}
r_1+r_2=2\lambda+\frac{b^2}{4\lambda}+O\left(\frac{1}{\lambda^2}\right)\ \ \ \text{and}\ \ \ r_1-r_2=ib +O\left(\frac{1}{\lambda^2}\right).
\end{equation}
From equation \eqref{eq-3.26} it follows for $\la\in S$ large enough, we have 
\begin{equation}\label{eq-3.27}
\left\{
\begin{array}{ll}
\displaystyle{\sinh(r_1+r_2)=\sinh(2\lambda)+\frac{b^2\, \cosh(2\lambda)}{4\lambda^2}+O\left(\frac{1}{\lambda^2}\right),}
\\ \noalign{\medskip}
\displaystyle{\sinh(r_1-r_2)=i\sin(b)+O\left(\frac{1}{\lambda^2}\right),}
\\ \noalign{\medskip}
\displaystyle{\cosh(r_1+r_2)-\cosh(r_1-r_2)=\cosh(2\lambda)-\cos(b)+O\left(\frac{1}{\lambda}\right). }
\end{array}
\right.
\end{equation}
So, inserting \eqref{eq-3.27} in \eqref{eq-3.21}, we get
\begin{equation}\label{eq-3.28}
f(\lambda)=\sinh(2\lambda)+\frac{\gamma\, \left(\cosh(2\lambda)-\cos(b)\right)}{\lambda^{1-\alpha}}+\frac{b\,\left(\cosh(2\lambda)\, b+2\sin(b)\right)}{4\lambda}+O\left(\frac{1}{\lambda^{2-\alpha}}\right).
\end{equation}
Since the roots of the analytic function $\la \to \sinh(2\lambda)$ are $\lambda^0_{1,n}=i n\pi$ and $\lambda^0_{2,n}=i n\pi+\frac{i\pi}{2}$, for any $n\in \Z$, using the Rouch\'e's Theorem, we deduce that $f$ admits an infinity of simple roots in $S$ denoted by $\lambda_{1,n}$ and $\lambda_{2,n}$ for $|n|\geq n_0$, for $n_0$ large enough, such that \begin{eqnarray}
\lambda_{1,n}=i n\pi +\epsilon_{1,n},\quad \text{where }\lim_{|n|\to+\infty}\epsilon_{1,n}=0,\label{eq-3.24}\\
\lambda_{2,n}=i n\pi+\frac{i\pi}{2} +\epsilon_{2,n},\quad \text{where }\lim_{|n|\to+\infty}\epsilon_{2,n}=0.\label{eq-3.25}
\end{eqnarray}\\[0.1in]
\noindent \textbf{Step 3.}  {\bf Asymptotic behavior of $\epsilon_{1,n}$}. First, using \eqref{eq-3.24} we obtain 
 \begin{equation}\label{fesinh}
 \sinh(2\la_{1,n})=\sinh(2\epsilon_{1,n})=2\epsilon_{1,n}+O\left(\epsilon_{1,n}^3\right),
 \end{equation}
\begin{equation}\label{fecosh}
 \cosh(2\la_{1,n})=\cosh(2\epsilon_{1,n})=1+O\left(\epsilon_{1,n}^2\right),
 \end{equation}
\begin{equation}\label{felam}
\dfrac{1}{\la_{1,n}}=-\dfrac{i}{n\pi}+O\left(\frac{\epsilon_{1,n}}{n^2}\right)
\end{equation}
and 
\begin{equation}\label{feplam}
\dfrac{1}{\la_{1,n}^{1-\alpha}}=\dfrac{1}{(in\pi)^{1-\alpha}}+O\left(\frac{\epsilon_{1,n}}{n^{2-\alpha}}\right).
\end{equation}
Next, by inserting \eqref{fesinh}-\eqref{feplam} in the identity $f(\la_{1,n})=0$ and using the fact that
$$i^{-1+\alpha}=\sin\left(\frac{\pi \alpha}{2}\right)-i\cos\left(\frac{\pi \alpha}{2}\right),$$
we find after simplification  
  \begin{equation}\label{eq-3.31}
\epsilon_{1,n}-\frac{\gamma\, \left(1-\cos(b)\right)\left(-\sin\left(\frac{\pi \alpha}{2}\right)+i\cos\left(\frac{\pi \alpha}{2}\right)\right)}{2(n \pi)^{1-\alpha}}+\frac{b\,\left( b+2\sin(b)\right)}{8in\pi}+O\left(\frac{1}{n^{2-\alpha}}\right)+O\left(\frac{\epsilon_{1,n}^2}{n^{1-\alpha}}\right)+O(\epsilon_{1,n}^3)=0
\end{equation}
and thus 
\begin{equation}\label{ep1n1}
\epsilon_{1,n}=\frac{\gamma\, \left(1-\cos(b)\right)\left(-\sin\left(\frac{\pi \alpha}{2}\right)+i\cos\left(\frac{\pi \alpha}{2}\right)\right)}{2(n \pi)^{1-\alpha}}+i\frac{b\,\left( b+2\sin(b)\right)}{8n\pi}+O\left(\frac{1}{n^{2-\alpha}}\right)+O\left(\frac{\epsilon_{1,n}^2}{n^{1-\alpha}}\right).
\end{equation}
 We distinguish two cases:\\[0.1in]
 \textbf{Case 1.} There exists  no integer $k\in \mathbb{Z}$ such that $b=2k\pi$. Then, we have
 $$1-\cos(b)\neq0,$$
 therefore, from \eqref{eq-3.31}, we get
 \begin{equation}\label{eq-3.32}
 \epsilon_{1,n}=\frac{\gamma\, \left(1-\cos(b)\right)\left(-\sin\left(\frac{\pi \alpha}{2}\right)+i\cos\left(\frac{\pi \alpha}{2}\right)\right)}{2(n \pi)^{1-\alpha}}+o\left(\frac{1}{n^{1-\alpha}}\right).
 \end{equation}
Inserting \eqref{eq-3.32} in \eqref{eq-3.24}, we get estimations \eqref{eq-3.15} and \eqref{eq-3.19}.\\[0.1in]
\textbf{Case 2.} If there exists $k\in\mathbb{Z}$ such that $b=2k\pi$, then
$$1-\cos(b)=0\ \ \ \text{and}\ \ \ \sin(b)=0,$$
 therefore, from \eqref{ep1n1}, we get
 \begin{equation}\label{eq-3.33}
\epsilon_{1,n}=\frac{ib^2}{8n\pi}+o\left(\frac{1}{n}\right).
\end{equation}
 Inserting \eqref{eq-3.33} in \eqref{eq-3.24}, we get
 \begin{equation}\label{eq-3.34}
\lambda_{1,n}=i n\pi +\frac{ib^2}{8n\pi}+\frac{\widetilde{\varepsilon}_{1,n}}{n},\quad \text{where }\lim_{|n|\to+\infty}\widetilde{\varepsilon}_{1,n}=0.
 \end{equation}
That implies that 
 \begin{equation*}
 r_1+r_2=2in\pi+\frac{2\widetilde{\varepsilon}_{1,n}}{n}-\frac{b^2\, (3i b^2-16\pi \widetilde{\varepsilon}_{1,n}) }{64 \pi n^3}+O \left( \frac{1}{n^5}\right)\ \ \ \text{and}\ \ \ r_1-r_2=i b+\frac{i b^3}{8\pi^2 n^2}+O \left( \frac{1}{n^4}\right)
 \end{equation*}
and thus 
\begin{equation}\label{eq-3.35}
\left\{
\begin{array}{ll}
\displaystyle{\sinh(r_1+r_2)=\frac{2\widetilde{\varepsilon}_{1,n}}{n}+\frac{256\pi^3\widetilde{\varepsilon}_{1,n}^3+48\pi b^2\widetilde{\varepsilon}_{1,n}-9ib^4}{192\pi^3n^3}+O\left(\frac{1}{n^5}\right),}
\\ \noalign{\medskip}
\displaystyle{\sinh(r_1-r_2)=\frac{i b^3}{8\pi^2 n^2}+O \left( \frac{1}{n^4}\right),}
\\ \noalign{\medskip}
\displaystyle{\cosh(r_1+r_2)-\cosh(r_1-r_2)=\frac{2\widetilde{\varepsilon}_{1,n}}{n^2}+O\left(\frac{1}{n^4}\right). }
\end{array}
\right.
\end{equation}
 Inserting \eqref{eq-3.34} and \eqref{eq-3.35} in \eqref{eq-3.21}, then using the asymptotic expansion, we get
 \begin{equation*}
 f(\lambda_{1,n})=\frac{2}{n}\left(\widetilde{\varepsilon}_{1,n}-\frac{7 i b^4}{128\pi^3 n^2} +O\left(\frac{1}{n^{3-\alpha}}\right)+O\left(\frac{\widetilde{\varepsilon}_{1,n}^2}{n^{2-\alpha}}\right) \right)=0,
 \end{equation*}
hence, we obtain
\begin{equation}\label{eq-3.36}
\widetilde{\varepsilon}_{1,n}=\frac{7 i b^4}{128\pi^3 n^2}+\frac{\zeta_{1,n}}{n^2},\ \text{such that }  \lim_{|n|\to+\infty}\zeta_{1,n}=0.
\end{equation}
Substituting  \eqref{eq-3.36} in \eqref{eq-3.34}, we get
 \begin{equation}\label{eq-3.37}
 \lambda_{1,n}=i n\pi +\frac{ib^2}{8n\pi}+\frac{7 i b^4}{128\pi^3 n^2}+\frac{\zeta_{1,n}}{n^3}, 
 \end{equation}
again  the  first real part of $\lambda_{1,n}$ still does not exists, so  we need to increase the order of the finite expansion.  For this aim, inserting \eqref{eq-3.37} in \eqref{eq-3.13} and using the asymptotic expansion, we get
 \begin{equation*}
 r_1+r_2=2in\pi+\frac{i b^4+32\pi^3 \zeta_{1,n} }{16 \pi^3 n^3}+O \left( \frac{1}{n^5}\right)\ \ \ \text{and}\ \ \ r_1-r_2=i b+\frac{i b^3}{8\pi^2 n^2}+O \left( \frac{1}{n^4}\right).
 \end{equation*}
Therefore, we have
\begin{equation}\label{eq-3.38}
\left\{
\begin{array}{ll}
\displaystyle{\sinh(r_1+r_2)=\frac{2\zeta_{1,n}+\frac{i b^4}{16\pi^3}}{n^3}+O\left(\frac{1}{n^5}\right),}
\\ \noalign{\medskip}
\displaystyle{\sinh(r_1-r_2)=\frac{i b^3}{8\pi^2 n^2}+O \left( \frac{1}{n^4}\right),}
\\ \noalign{\medskip}
\displaystyle{\cosh(r_1+r_2)-\cosh(r_1-r_2)=\frac{b^4}{128 \pi^4 n^4}+O\left(\frac{1}{n^6}\right). }
\end{array}
\right.
\end{equation}
 Inserting \eqref{eq-3.37} and \eqref{eq-3.38} in \eqref{eq-3.21}, then using the asymptotic expansion, we get
 \begin{equation*}
 \frac{2}{n^3}\left(\zeta_{1,n}-{\frac {\gamma\,{b}^{6} \left( i\cos \left(\frac{\pi\alpha}{2}  \right) -\sin \left( \frac{\pi\alpha}{2} \right)  \right) }{128\,{
\pi}^{5-\alpha}{n}^{2-\alpha}}}+O\left(\frac{1}{n^2}\right)\right)=0,
 \end{equation*}
hence, we obtain
\begin{equation}\label{eq-3.39}
\zeta_{1,n}={\frac {\gamma\,{b}^{6} \left( i\cos \left(\frac{\pi\alpha}{2}  \right) -\sin \left( \frac{\pi\alpha}{2} \right)  \right) }{128\,{
\pi}^{5-\alpha}{n}^{2-\alpha}}}+O\left(\frac{1}{n^2}\right).
\end{equation}
Inserting \eqref{eq-3.39} in \eqref{eq-3.37}, we get estimation \eqref{eq-3.17}. \\[0.1in]
\textbf{Step 4.}  {\bf Asymptotic behavior of $\epsilon_{2,n}$}. First, using \eqref{eq-3.25} we obtain 
 \begin{equation}\label{fesinh2}
 \sinh(2\la_{2,n})=-\sinh(2\epsilon_{2,n})=-2\epsilon_{2,n}+O\left(\epsilon_{2,n}^3\right),
 \end{equation}
\begin{equation}\label{fecosh2}
 \cosh(2\la_{2,n})=-\cosh(2\epsilon_{2,n})=-1+O\left(\epsilon_{2,n}^2\right),
 \end{equation}
\begin{equation}\label{felam2}
\dfrac{1}{\la_{2,n}}=-\dfrac{i}{n\pi}+O\left(\frac{1}{n^2}\right)
\end{equation}
and 
\begin{equation}\label{feplam2}
\dfrac{1}{\la_{2,n}^{1-\alpha}}=\dfrac{1}{(in\pi)^{1-\alpha}}+O\left(\frac{1}{n^{2-\alpha}}\right).
\end{equation}
Next, by inserting \eqref{fesinh2}-\eqref{feplam2} in the identity $f(\la_{1,n})=0$, we find after simplification  
\begin{equation}\label{eq-3.42}
\epsilon_{2,n}-\frac{\gamma\, \left(1+\cos(b)\right)\left(-\sin\left(\frac{\pi \alpha}{2}\right)+i\cos\left(\frac{\pi \alpha}{2}\right)\right)}{2(n \pi)^{1-\alpha}}+\frac{b\,\left( b-2\sin(b)\right)}{8in\pi}+O\left(\frac{1}{n^{2-\alpha}}\right)+O\left(\frac{\epsilon_{2,n}^2}{n^{1-\alpha}}\right)+O(\epsilon_{2,n}^3)=0
\end{equation}
and thus 
\begin{equation}\label{ep2n1}
\epsilon_{2,n}=\frac{\gamma\, \left(1+\cos(b)\right)\left(-\sin\left(\frac{\pi \alpha}{2}\right)+i\cos\left(\frac{\pi \alpha}{2}\right)\right)}{2(n \pi)^{1-\alpha}}+i\frac{b\,\left( b-2\sin(b)\right)}{8n\pi}+O\left(\frac{1}{n^{2-\alpha}}\right)+O\left(\frac{\epsilon_{1,n}^2}{n^{1-\alpha}}\right).
\end{equation}
We distinguish two cases:\\[0.1in]
 \textbf{Case 1.} There exists  no integer $k\in \mathbb{Z}$ such that $b=\pi+2k\pi$. Then, we have
 $$1+\cos(b)\neq0,$$
 therefore, from \eqref{eq-3.42}, we get
 \begin{equation}\label{eq-3.43}
 \epsilon_{2,n}=\frac{\gamma\, \left(1+\cos(b)\right)\left(-\sin\left(\frac{\pi \alpha}{2}\right)+i\cos\left(\frac{\pi \alpha}{2}\right)\right)}{2(n \pi)^{1-\alpha}}+o\left(\frac{1}{n^{1-\alpha}}\right).
 \end{equation}
Inserting \eqref{eq-3.43} in \eqref{eq-3.25}, we get estimations \eqref{eq-3.16} and \eqref{eq-3.18}.\\[0.1in]
\textbf{Case 2.} If there exists $k\in\mathbb{Z}$ such that $b=\pi+2k\pi$, then
$$1+\cos(b)=0\ \ \ \text{and}\ \ \ \sin(b)=0,$$
 therefore, from \eqref{eq-3.42}, we get
 \begin{equation}\label{eq-3.44}
\epsilon_{2,n}=\frac{ib^2}{8n\pi}+o\left(\frac{1}{n}\right).
\end{equation}
 Inserting \eqref{eq-3.44} in \eqref{eq-3.25}, we get
 \begin{equation}\label{eq-3.45}
\lambda_{2,n}=i n\pi +\frac{i\pi}{2}+\frac{ib^2}{8n\pi}+\frac{\widetilde{\varepsilon}_{2,n}}{n}\ \text{where} \lim_{|n|\to+\infty}\widetilde{\varepsilon}_{2,n}=0.
,
 \end{equation} 
Since in this case the  first real part of $\lambda_{2,n}$ still does not exists, we need to increase the order of the finite expansion.  Inserting \eqref{eq-3.45} in \eqref{eq-3.13} and using the asymptotic expansion, we get
 \begin{equation*}
 \left\{
 \begin{array}{ll}
 \displaystyle{r_1+r_2=2in\pi+i\pi+\frac{2\widetilde{\varepsilon}_{2,n}}{n}+\frac{i b^2}{8\pi n^2}+O \left( \frac{1}{n^3}\right)},\\ \noalign{\medskip}
\displaystyle{
 
  r_1-r_2=i b+\frac{i b^3}{8\pi^2 n^2}+O \left( \frac{1}{n^3}\right).}
 \end{array} 
 \right.
 \end{equation*}
 Therefore, we have
\begin{equation}\label{eq-3.46}
\left\{
\begin{array}{ll}
\displaystyle{\sinh(r_1+r_2)=-\frac{2\widetilde{\varepsilon}_{2,n}}{n}-\frac{i b^2}{8\pi n^2}+O \left( \frac{1}{n^3}\right),}
\\ \noalign{\medskip}
\displaystyle{\sinh(r_1-r_2)=-\frac{i b^3}{8\pi^2 n^2}+O \left( \frac{1}{n^3}\right),}
\\ \noalign{\medskip}
\displaystyle{\cosh(r_1+r_2)-\cosh(r_1-r_2)=-\frac{2\widetilde{\varepsilon}^2_{2,n}}{n^2}+O\left(\frac{1}{n^3}\right). }
\end{array}
\right.
\end{equation}
Inserting \eqref{eq-3.45} and \eqref{eq-3.46} in \eqref{eq-3.21}, then using the asymptotic expansion, we get
 \begin{equation*}
 -\frac{2}{n}\left(\widetilde{\varepsilon}_{2,n}+\frac{i b^2}{16\pi n}
+O\left(\frac{1}{n^2}\right)+O\left(\frac{\widetilde{\varepsilon}_{2,n}^2}{n^{2-\alpha}}\right) \right)=0,
 \end{equation*}
 hence, we get
 \begin{equation}\label{eq-3.47}
 \widetilde{\varepsilon}_{2,n}=-\frac{i b^2}{16\pi n}+\frac{\zeta_{2,n}}{n},\ \ \text{such that }\lim_{|n|\to+\infty}\zeta_{2,n}=0.
 \end{equation}
Inserting \eqref{eq-3.47} in \eqref{eq-3.45}, we get
  \begin{equation}\label{eq-3.48}
\lambda_{2,n}=i n\pi +\frac{i\pi}{2}+\frac{ib^2}{8n\pi}-\frac{i b^2}{16\pi n^2}+\frac{\zeta_{2,n}}{n^2},
 \end{equation} 
 again  the  first real part of $\lambda_{2,n}$ still does not exists, so  we need to increase the order of the finite expansion.  For this aim, inserting \eqref{eq-3.48} in \eqref{eq-3.13} and using the asymptotic expansion, we get
 \begin{equation*}
 \left\{
 \begin{array}{ll}
 \displaystyle{r_1+r_2=2in\pi+i\pi+\frac{2\zeta_{2,n}}{n}-\frac{i b^2\, (4\pi^2+3b^2) }{64 \pi n^3}+\frac{b^2\left(4i\pi^2+9i b^2+32\pi \zeta_{2,n} \right)}{128\pi^3 n^4}+O \left( \frac{1}{n^5}\right)},\\ \noalign{\medskip}
\displaystyle{
 
  r_1-r_2=i b+\frac{i b^3}{8\pi^2 n^2}-\frac{i b^3}{8\pi^2 n^3}+\frac{3i b^3\left(4\pi^2+b^2\right)}{128\pi^4 n^4}+O \left( \frac{1}{n^5}\right).}
 \end{array} 
 \right.
 \end{equation*}
Therefore, we have
\begin{equation}\label{eq-3.49}
\left\{
\begin{array}{ll}
\displaystyle{\sinh(r_1+r_2)=-\frac{2\zeta_{2,n}}{n}+\frac{i b^2\, (4\pi^2+3b^2) }{64 \pi n^3}-\frac{b^2\left(4i\pi^2+9i b^2+32\pi \zeta_{2,n} \right)}{128\pi^3 n^4}+O \left( \frac{1}{n^5}\right),}
\\ \noalign{\medskip}
\displaystyle{\sinh(r_1-r_2)=-\frac{i b^3}{8\pi^2 n^2}+\frac{i b^3}{8\pi^2 n^3}-\frac{3i b^3\left(4\pi^2+b^2\right)}{128\pi^4 n^4}+O \left( \frac{1}{n^5}\right),}
\\ \noalign{\medskip}
\displaystyle{\cosh(r_1+r_2)-\cosh(r_1-r_2)=-\frac{2\zeta^2_{2,n}+\frac{b^6}{128\pi^4}}{n^4}+O\left(\frac{1}{n^5}\right). }
\end{array}
\right.
\end{equation}
 Inserting \eqref{eq-3.48} and \eqref{eq-3.49} in \eqref{eq-3.21}, then using the asymptotic expansion, we get
 \begin{equation*}
 -\frac{2}{n^2}\left(\zeta_{2,n}-{\frac {i{b}^{2} \left( 4{\pi}^{2}+7{b}^{2}
 \right) }{128{\pi}^{3}n}}+{\frac {i{b}^{2} \left( 4{\pi
}^{2}+21{b}^{2} \right) }{256{\pi}^{3}{n}^{2}}}-{\frac {\gamma{b}^{6}
 \left( i\cos \left( \frac{\pi\alpha}{2} \right) -\sin \left( \frac{\pi\alpha}{2} \right)  \right) }{256{\pi}^{4}{\pi}^{1-\alpha}{n}^{3-\alpha}
}}
+O\left(\frac{1}{n^3}\right)+O\left(\frac{\zeta_{2,n}^2}{n^{3-\alpha}}\right) \right)=0,
 \end{equation*}
hence, we get
\begin{equation}\label{eq-3.50}
\zeta_{2,n}={\frac {i{b}^{2} \left( 4{\pi}^{2}+7{b}^{2}
 \right) }{128{\pi}^{3}n}}-{\frac {i{b}^{2} \left( 4{\pi
}^{2}+21{b}^{2} \right) }{256{\pi}^{3}{n}^{2}}}+{\frac {\gamma{b}^{6}
 \left( i\cos \left( \frac{\pi\alpha}{2} \right) -\sin \left( \frac{\pi\alpha}{2} \right)  \right) }{256{\pi}^{4}{\pi}^{1-\alpha}{n}^{3-\alpha}
}}
+O\left(\frac{1}{n^3}\right). 
\end{equation}
Finally, inserting \eqref{eq-3.50} in \eqref{eq-3.48}, we get \eqref{eq-3.20}. The proof is thus complete.
\end{proof}
 \noindent\textbf{Proof of  Theorem \ref{Theorem-3.1}.} From Proposition \ref{Theorem-3.2}, the operator $\mathcal{A}$ has two branches of eigenvalues, the energy corresponding to the first and second branch of eigenvalues  has no exponential decaying. Therefore the total energy of   system \eqref{eq-2.1}-\eqref{eq-2.8} has no exponential decaying when $a=1$. Thus, the proof is complete.	
	\subsection{Non Uniform Stability for $\mathbf{a\neq 1}$}\label{Section-3.2}
\noindent	In this part, we assume that $a\neq 1$, $\eta>0$ and $b$ verified condition ${\rm (SC)}$. Our goal is to show that \eqref{eq-2.1}-\eqref{eq-2.8} is not exponentially stable. This result is due to the fact that a subsequence of eigenvalues of $\mathcal{A}$ is close to the imaginary axis.
	\begin{theoreme}\label{Theorem-3.3}
	Assume that  $a\neq 1$ and that condition ${\rm (SC)}$ holds. Then,  the semigroup of contractions $e^{t\AA}$ is not uniformly (exponentially) stable in the energy space $\HH$.
		\end{theoreme}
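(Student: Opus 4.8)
The plan is to reproduce, for $a\neq 1$, the spectral mechanism behind Theorem~\ref{Theorem-3.1}: I will exhibit a sequence of eigenvalues $(\la_n)$ of $\AA$ with $\Re(\la_n)\to 0^-$ and $|\Im(\la_n)|\to+\infty$, and then conclude by the frequency-domain (Huang--Pr\"uss) characterization of exponential stability. First I would let $\la\in\C^{\ast}$ be an eigenvalue with normalized eigenvector $U=(u,\la u,y,\la y,\omega)$ and repeat the reduction \eqref{eq-3.1}--\eqref{eq-3.11}, the sole change being that the second equation now reads $\la^2 y-ay_{xx}-b\la u=0$. Eliminating $\omega$ through the identity $\kappa(\alpha)\int_{\R}\frac{|\xi|^{\alpha-1}}{\la+\xi^2+\eta}d\xi=(\la+\eta)^{\alpha-1}$ again turns the dynamic boundary condition into $u_x(1)+\gamma\la(\la+\eta)^{\alpha-1}u(1)=0$, while substituting $y=\frac{1}{b\la}(u_{xx}-\la^2u)$ into \eqref{eq-3.1} yields
\be
au_{xxxx}-(a+1)\la^2u_{xx}+\la^2(\la^2+b^2)u=0,
\ee
together with $u(0)=u_{xx}(0)=0$ (from $y(0)=0$), $u_{xx}(1)-\la^2u(1)=0$ (from $y(1)=0$), and the fractional condition above.

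The characteristic roots of the quartic are $\pm r_1,\pm r_3$ with $r_{1,3}^2=\frac{(a+1)\la^2\pm\la\sqrt{(a-1)^2\la^2-4ab^2}}{2a}$, so for $|\la|$ large $r_1=\la+O(\la^{-1})$ and $r_3=\la/\sqrt{a}+O(\la^{-1})$, reflecting the two propagation speeds $1$ and $\sqrt a$. Using $u(0)=u_{xx}(0)=0$ and $r_1^2\neq r_3^2$ I would reduce the solution to $u(x)=c_1\sinh(r_1x)+c_3\sinh(r_3x)$; imposing the last two boundary conditions then gives a $2\times2$ homogeneous system whose vanishing determinant defines the characteristic function $f(\la)$. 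Expanding $r_1,r_3$ and dividing by the dominant power of $\la$, this organizes into
\be
\frac{a}{(a-1)\la^{3}}\,f(\la)=\sinh(r_3)\cosh(r_1)+\frac{\gamma}{\la^{1-\alpha}}\sinh(r_1)\sinh(r_3)+O\!\left(\frac{1}{\la^{2}}\right),
\ee
the fractional damping surviving only in the middle term.

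Next I would locate eigenvalues along the branch $\cosh(r_1)\approx 0$, i.e.\ near $\la^0_n=i(n+\tfrac12)\pi$, where $\sinh(r_3)$ is bounded away from $0$. Factoring out $\sinh(r_3)$ and using $\sinh(r_1)\to i(-1)^n$, $\cosh(r_1)\approx i(-1)^n(r_1-\la^0_n)$ there, a Rouch\'e argument exactly as in Step~2 of Proposition~\ref{Theorem-3.2} produces a simple root $\la_n=i(n+\tfrac12)\pi+\epsilon_n$ with $\epsilon_n=-\gamma\la_n^{\alpha-1}+o(\la_n^{\alpha-1})$; since $i^{\alpha-1}=\sin(\tfrac{\pi\alpha}{2})-i\cos(\tfrac{\pi\alpha}{2})$, this gives $\Re(\la_n)=-\gamma\sin(\tfrac{\pi\alpha}{2})(n\pi)^{\alpha-1}+o(n^{\alpha-1})\to 0^-$. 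With these eigenvalues and their normalized eigenvectors $U_n$, the relation $(i\Im(\la_n)I-\AA)U_n=-\Re(\la_n)U_n$, together with $i\R\subset\rho(\AA)$ for $\eta>0$ (Theorem~\ref{Theorem-2.4}), forces $\|(i\Im(\la_n)I-\AA)^{-1}\|\geq|\Re(\la_n)|^{-1}\to+\infty$, so $e^{t\AA}$ is not exponentially stable.

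The hard part will be the Rouch\'e step. Dividing by $\sinh(r_3)=i\sin\!\big((n+\tfrac12)\pi/\sqrt a\big)+o(1)$ is legitimate only where this quantity stays bounded away from $0$, and because $r_1\sim\la$ and $r_3\sim\la/\sqrt a$ are generically incommensurate I do not control it for every $n$. I therefore expect the real work to be selecting a subsequence of indices $n$ along which $\sin\!\big((n+\tfrac12)\pi/\sqrt a\big)$ is bounded below (equidistribution, when $\sqrt a\notin\Q$, guarantees such a subsequence; the rational case is even simpler since the values are periodic), and checking that condition ${\rm (SC)}$ excludes the exact resonances $r_1\in i\pi(\Z+\tfrac12)$, $r_3\in i\pi\Z$ that would merge the two branches and spoil the simple-root count. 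The remaining asymptotic bookkeeping is entirely parallel to Proposition~\ref{Theorem-3.2}.
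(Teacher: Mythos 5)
Your proposal is correct and follows the same overall strategy as the paper: reduce the eigenvalue problem to a fourth-order ODE with a dynamic boundary condition, form the $2\times 2$ characteristic determinant, expand it asymptotically (your expansion $\cosh(r_1)\sinh(r_3)+\gamma\lambda^{\alpha-1}\sinh(r_1)\sinh(r_3)+O(\lambda^{\alpha-2})$ agrees, after normalization by $(a-1)\lambda^3/a$, with the paper's $F_0+F_1/\lambda^{1-\alpha}+O(1/\lambda)$ where $F_0=\cosh(\lambda)\sinh(\lambda/\sqrt a)$), and locate roots near the imaginary axis by Rouch\'e. The differences are worth recording. First, you eliminate $y$ and work with $u$ (boundary data $u(0)=u_{xx}(0)=0$, $u_{xx}(1)=\lambda^2u(1)$, $u_x(1)=-\gamma\lambda(\lambda+\eta)^{\alpha-1}u(1)$), while the paper eliminates $u$ and works with $y$; this is cosmetic, and note the small slip that your expression for $y$ must be substituted into the \emph{second} equation, not back into \eqref{eq-3.1}. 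Second, and more substantively, the paper's Proposition \ref{Theorem-3.4} applies Rouch\'e uniformly to \emph{both} families of roots of $F_0$ ($in\pi\sqrt a$ and $i(n+\frac12)\pi$) and only claims $o(1)$ closeness, silently ignoring that the roots of $F_0$ are not uniformly simple: when $(n+\tfrac12)\approx m\sqrt a$ the two families nearly collide, $|F_0'|$ degenerates, and the uniform Rouch\'e estimate breaks down. You identify exactly this small-divisor obstruction and repair it by restricting to a subsequence of $n$ along which $|\sin((n+\tfrac12)\pi/\sqrt a)|$ stays bounded below (equidistribution for $\sqrt a\notin\Q$, periodicity for $\sqrt a\in\Q$); since disproving exponential stability only requires \emph{one} sequence of eigenvalues approaching $i\R$, this is fully sufficient and is in fact more careful than the paper's argument. (The worry about simplicity of the roots is immaterial: Rouch\'e gives at least one root in each disk, and any eigenvalue--eigenvector pair serves.) Third, your endgame differs: you refine the expansion to $\Re(\lambda_n)=-\gamma\sin(\tfrac{\pi\alpha}{2})(n\pi)^{\alpha-1}+o(n^{\alpha-1})$ and conclude via the Huang--Pr\"uss resolvent lower bound $\|(i\Im(\lambda_n)I-\AA)^{-1}\|\geq|\Re(\lambda_n)|^{-1}\to\infty$ (legitimate, since $i\R\subset\rho(\AA)$ holds for $\eta>0$ under $({\rm SC})$ by Theorem \ref{Theorem-2.4}), whereas the paper concludes directly from the fact that eigenvalues with real parts $o(1)$ preclude a spectral gap; both conclusions are valid, and your refined real-part asymptotics, while not needed, are consistent with the $a=1$ expansions of Proposition \ref{Theorem-3.2}.
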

 \noindent For the proof of Theorem \ref{Theorem-3.3}, we aim to show that an infinite number of eigenvalues of $\mathcal{A}$ approach the imaginary axis.  We determine the characteristic equation satisfied by the eigenvalues of $\mathcal{A}$. For the aim, let $\lambda\in \C^*$ be an eigenvalue of $\mathcal{A}$ and let $U=(u,\lambda u,y,\lambda y,\omega) \in D(\mathcal{A})$ be an associated eigenvector such that $\|U\|_{\mathcal{H}}=1$. Then, we have 
 \begin{eqnarray*}
    \la^2u-u_{xx}+b\la y=0,\\
    \la^2y-ay_{xx}-b\la u=0,\\
    \omega(\xi)=\frac{\mu(\xi)}{\la +\xi^2+\eta}\la u(1)
 \end{eqnarray*}
  with the following boundary conditions		
\begin{eqnarray*}
y(0)=y(1)=u(0)=0,
\\
u_x(1)=-\gamma\kappa(\alpha)\int_{\mathbb{R}}|\xi|^{\alpha-\frac{1}{2}}\omega(\xi)d\xi.
\end{eqnarray*} 
 Equivalently,   we get the following system
 \begin{equation}\label{eq-3.51}
\left\{
\begin{array}{ll}
\displaystyle{ay_{xxxx}-(a+1)\la^2y_{xx}+\la^2(\la^2+b^2)y=0,}
\\[0.1in]
\displaystyle{y(0)=y(1)=y_{xx}(0)=0,}
\\[0.1in]
\displaystyle{\ a y_{xxx}(1)+a\gamma\la(\la+\eta)^{\alpha-1}y_{xx}(1)-\la^2y_x(1)=0.}
\end{array}
\right.
\end{equation}
The general solution of equation \eqref{eq-3.51} is given by 
	\begin{equation*}
	y(x)=c_1 \sinh(r_1 x)+c_2\sinh(r_2 x)+c_3\cosh(r_1 x)+c_4\cosh(r_2 x),
	\end{equation*}
	where $c_1,\ c_2,\ c_3,\ c_4\in\mathbb{C}$, and
	\begin{equation}\label{eq-3.52}
	r_1=\la\sqrt{\dfrac{(a+1)+(a-1)\sqrt{1-\dfrac{4ab^2}{(a-1)^2\la^2}}}{2a}}\ \ \ \text{and}\ \ \
	r_2=\la\sqrt{\dfrac{(a+1)-(a-1)\sqrt{1-\dfrac{4ab^2}{(a-1)^2\la^2}}}{2a}}.	\end{equation}
Using the boundary conditions in \eqref{eq-3.51}  at $x = 0$ and the fact that $r_1^2\neq r_2^2$, we get $c_3=c_4=0$. Therefore
\begin{equation*}
			y(x)=c_1 \sinh(r_1 x)+c_2\sinh(r_2 x).
			\end{equation*}
Moreover, the boundary conditions in \eqref{eq-3.51} at $x=1$ can be expressed by  $$\mathcal{M} \begin{pmatrix}
	c_1\\ c_2
\end{pmatrix}=0,$$ where
\begin{equation*}
	\mathcal{M}=\begin{pmatrix}
	\sinh(r_1)& \sinh(r_2)\\ \noalign{\medskip}
	\left(a r_1^2-\lambda^2\right)r_1 \cosh(r_1)+a\gamma\la(\la+\eta)^{\alpha-1} r_1^2\sinh(r_1)& \left(a r_2^2-\lambda^2\right)r_2\cosh(r_2)+a\gamma\la(\la+\eta)^{\alpha-1} r_2^2\sinh(r_2)
	\end{pmatrix}.
\end{equation*}
Then the determinant of $\mathcal{M}$ is given by
\begin{equation*}
det\left(\mathcal{M}\right)=-a \gamma\lambda  (\la+\eta)^{\alpha-1}(r_1^2-r_2^2) \sinh(r_1)\sinh(r_2)-r_1(ar_1^2-\lambda^2)\cosh(r_1)\sinh(r_2) +r_2(ar_2^2-\lambda^2)\sinh(r_1) \cosh(r_2).
	\end{equation*}
Hence a non trivial solution $y$ of \eqref{eq-3.51} exists if and only if $\displaystyle{det\left(\mathcal{M}\right)}=0$. Set $F(\la)=\displaystyle{\dfrac{det\left(\mathcal{M}\right)}{{(a-1)}{\lambda^3}}}$, thus the characteristic equation is equivalent to $F(\la)=0$.\\[0.1in] 
In the sequel, since $\mathcal{A}$ is dissipative, we study the asymptotic behavior of the large eigenvalues $\la$ of $\mathcal{A}$ in the strip $-\alpha_0\leq \Re(\la)\leq 0$, for some $\alpha_0>0$ large enough.
   \begin{pro}\label{Theorem-3.4}
	Assume that $a\neq1$ and that condition ${\rm (SC)}$ holds. Then there exists $n_0\in \mathbb{N}$ sufficiently large  and two sequences $\left(\lambda_{1,n}\right)_{ |n|\geq n_0} $ and $\left(\lambda_{2,n}\right)_{ |n|\geq n_0} $ of simple roots of $f$ (that are also simple eigenvalues of $\mathcal{A}$) satisfying the following asymptotic behavior
	\begin{equation}\label{eq-3.53}
	\la_{1,n}=in\pi\sqrt{a}+o(1) \quad \text{and}\quad \la_{2,n}=i\left(n+\frac{1}{2}\right)\pi+o(1).
	\end{equation}
\end{pro}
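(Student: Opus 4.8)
The plan is to follow the scheme of the proof of Proposition \ref{Theorem-3.2}, now taking advantage of the two distinct wave speeds, which split the characteristic function into two factors carried by different scales. First I would expand the characteristic roots $r_1,r_2$ of \eqref{eq-3.52} for $\la$ large in the strip $S$. Since $\tfrac{4ab^2}{(a-1)^2\la^2}\to0$, a binomial expansion of the inner square root yields
\[
r_1=\la-\frac{b^2}{2(a-1)\la}+O\!\left(\frac{1}{\la^{3}}\right),\qquad
r_2=\frac{\la}{\sqrt a}+\frac{\sqrt a\,b^2}{2(a-1)\la}+O\!\left(\frac{1}{\la^{3}}\right),
\]
together with $ar_1^2-\la^2=(a-1)\la^2+O(1)$, $ar_2^2-\la^2=\tfrac{ab^2}{a-1}+O(\la^{-2})$ and $r_1^2-r_2^2=\tfrac{a-1}{a}\la^2+O(1)$.

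Next I would insert these expansions into $\det(\mathcal M)$ and divide by $(a-1)\la^3$. The three summands of $\det(\mathcal M)$ then scale as $\la^{2+\alpha}$, $\la^{3}$ and $\la$ respectively, so after normalisation
\[
F(\la)=-\cosh(r_1)\sinh(r_2)-\gamma\,\la^{\alpha-1}\sinh(r_1)\sinh(r_2)+O\!\left(\frac{1}{\la^{2}}\right).
\]
Because $\Re(\la)$ stays bounded in $S$ while $\Re(r_1)\approx\Re(\la)$ and $\Re(r_2)\approx\Re(\la)/\sqrt a$, all four hyperbolic quantities are $O(1)$; and since $\alpha<1$ the damping contribution $\gamma\la^{\alpha-1}$ is $o(1)$. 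Hence the dominant balance of $F(\la)=0$ is $\cosh(r_1)\sinh(r_2)=0$.

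The two factors produce the two announced families. The zeros of $\sinh(r_2)$ sit at $r_2=in\pi$; inverting $r_2=\la/\sqrt a+O(\la^{-1})$ gives $\la=in\pi\sqrt a+o(1)$, which is $\la_{1,n}$. The zeros of $\cosh(r_1)$ sit at $r_1=i\left(n+\tfrac12\right)\pi$; inverting $r_1=\la+O(\la^{-1})$ gives $\la=i\left(n+\tfrac12\right)\pi+o(1)$, which is $\la_{2,n}$. To upgrade these heuristics to genuine simple eigenvalues I would fix $n_0$ large and apply Rouch\'e's Theorem on circles of small fixed radius centred at $in\pi\sqrt a$ and at $i\left(n+\tfrac12\right)\pi$: on each such circle the leading factor is bounded away from zero while the remainder is $o(1)$, so $F$ has exactly one simple zero inside, and Lemma \ref{Theorem-2.5} together with $(\mathrm{SC})$ guarantees it does not lie on $i\R$.

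I expect the main obstacle to be the uniform bookkeeping on the Rouch\'e circles: one must keep the dominant factor $\cosh(r_1)\sinh(r_2)$ bounded below uniformly in $n$, which fails precisely when an approximate zero $in\pi\sqrt a$ of $\sinh(r_2)$ nearly coincides with a zero $i\left(n'+\tfrac12\right)\pi$ of $\cosh(r_1)$. Ruling out (or appropriately handling) such near-collisions for $|n|\ge n_0$ is exactly where the separation of scales $r_1\sim\la$ versus $r_2\sim\la/\sqrt a$ and the arithmetic nature of $\sqrt a$ enter, and for an irrational $\sqrt a$ this is where equidistribution of $(n\sqrt a\bmod 1)$ supplies a positive-density subsequence along which the factor is bounded below. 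Once the $o(1)$ localisation \eqref{eq-3.53} is secured, non-exponential stability follows as in the proof of Theorem \ref{Theorem-3.1}, since $\Re(\la_{1,n}),\Re(\la_{2,n})\to0$ exhibits eigenvalues of $\mathcal A$ tending to the imaginary axis.
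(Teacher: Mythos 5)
Your proposal is correct and is essentially the paper's own proof: expand $r_1,r_2$ for $\lambda$ large in the strip, reduce $F(\lambda)$ to the dominant product $\cosh(r_1)\sinh(r_2)$ (the paper writes it as $F_0(\lambda)=\cosh(\lambda)\sinh\left(\lambda/\sqrt{a}\right)$, which is legitimate since $\Re\lambda$ stays bounded and $r_1=\lambda+O(\lambda^{-1})$, $r_2=\lambda/\sqrt{a}+O(\lambda^{-1})$) plus a damping term of order $\lambda^{\alpha-1}$ and a lower-order remainder, then localize the roots near $in\pi\sqrt{a}$ and $i\left(n+\frac{1}{2}\right)\pi$ by Rouch\'e. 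The one place you go beyond the paper is the near-collision obstruction, and you are right that it is genuine: the paper's ``it is easy to see'' silently assumes the two zero families of $F_0$ stay uniformly apart, which can fail along subsequences for irrational $\sqrt{a}$. Note, however, that your equidistribution remedy would only produce a positive-density subsequence, which suffices for Theorem \ref{Theorem-3.3} but is weaker than the Proposition as stated; the full claim can be rescued without it. When $\pi\delta_n:=\left|n\pi\sqrt{a}-\left(n'+\frac{1}{2}\right)\pi\right|\to 0$, take Rouch\'e circles of radius $\rho_n$ with $|\lambda|^{(\alpha-1)/2}\ll\rho_n\ll 1$: then $|F_0|\geq c\rho_n^2$ on the circle dominates the perturbation, the circle traps exactly two zeros of $F$, both within $o(1)$ of both reference points (so \eqref{eq-3.53} still holds), and these zeros remain simple and distinct because near the collision $F\approx\pm i\sinh\left(z/\sqrt{a}\right)\left[\sinh(z+i\pi\delta_n)+\gamma\lambda^{\alpha-1}\cosh(z+i\pi\delta_n)\right]$, whose two roots are separated by roughly $\left|i\pi\delta_n+\gamma\lambda^{\alpha-1}\right|\geq\gamma\sin(\pi\alpha/2)|\lambda|^{\alpha-1}>0$, the damping coefficient having nonzero real part while $i\pi\delta_n$ is purely imaginary.
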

\begin{proof} The proof is divided into two steps.\\ [0.1in]
		\textbf{Step 1.}
			In this step, we proof the following asymptotic behavior estimate 
\begin{equation}\label{eq-3.54}
F(\lambda)=F_0(\lambda)+\frac{F_1(\lambda)}{\lambda^{1-\alpha}}+O\left(\frac{1}{\lambda}\right),
\end{equation}
where $$F_0(\lambda)=\cosh(\lambda)\sinh\left(\frac{\lambda}{\sqrt{a}}\right)\ \ \ \text{and}\ \ \		F_1(\lambda)=\sinh(\lambda)\sinh\left(\frac{\lambda}{\sqrt{a}}\right).$$
From equation \eqref{eq-3.52} it follows that for $\lambda$ large enough, we have
\begin{equation}\label{eq-3.55}
r_1=\lambda+O\left(\frac{1}{\lambda}\right)\ \ \ \text{and}\ \ \ r_2=\frac{\lambda}{\sqrt{a}}+O\left(\frac{1}{\lambda}\right).
\end{equation}
From \eqref{eq-3.55} and using the fact that real part of $\lambda$ is bounded, we get
\begin{equation}\label{eq-3.56}
\left\{
\begin{array}{ll}
\displaystyle{\sinh(r_1) \cosh(r_2) =\sinh(\lambda)\cosh\left(\frac{\lambda}{\sqrt{a}}\right)+O\left(\frac{1}{\lambda}\right),}
\\ \noalign{\medskip}
\displaystyle{  \cosh(r_1)\sinh(r_2) =\cosh(\lambda)\sinh\left(\frac{\lambda}{\sqrt{a}}\right)+O\left(\frac{1}{\lambda}\right),}
\\ \noalign{\medskip}
\displaystyle{\sinh(r_1)\sinh(r_2)=\sinh(\lambda)\sinh\left(\frac{\lambda}{\sqrt{a}}\right)+O\left(\frac{1}{\lambda}\right). }
\end{array}
\right.
\end{equation}	
Inserting 	\eqref{eq-3.55} and \eqref{eq-3.56} in $\det(\mathcal{M})$, then using the asymptotic expansion, we get
\begin{equation*}
det\left(\mathcal{M}\right)=-{(a-1)}{\lambda^3}\left(\cosh(\lambda)\sinh\left(\frac{\lambda}{\sqrt{a}}\right)+\frac{\sinh(\lambda)\sinh\left(\frac{\lambda}{\sqrt{a}}\right)}{\lambda^{1-\alpha}}+O\left(\frac{1}{\lambda}\right)\right).
\end{equation*} 
\\[0.1in]
\textbf{Step 2.} We look at the roots of $F(\la)$. First, the roots of $F_0$ are given by 
	$$
in\pi\sqrt{a}\ \ \ \text{and/ or}\ \ \ 	i\left(n+\frac{1}{2}\right)\pi\qquad n\in \Z.
	$$
Now, with help of Rouch\'e's Theorem, and using the asymptotic equation \eqref{eq-3.54}, it easy to see that the large roots of $F(\la)$ (denoted by $\lambda_{1,n}$ and $\lambda_{2,n}$) are simple and close to those of $F_0(\lambda)$; i.e., there exists $n_0\in \mathbb{N}$, such that for all $|n|>n_0$, we have
 $$   \la_{1,n}=in\pi\sqrt{a}+o(1)$$ \text{and/or} $$\la_{2,n}=i\left(n+\frac{1}{2}\right)\pi+o(1),
    $$
hence we get \eqref{eq-3.53}.
The proof is thus complete.
\end{proof}
 \noindent\textbf{Proof of  Theorem \ref{Theorem-3.3}.} From Proposition \ref{Theorem-3.4}, the operator $\mathcal{A}$ has two branches of eigenvalues, the energy corresponding to the first and second branch of eigenvalues  has no exponential decaying. Therefore the total energy of   system \eqref{eq-2.1}-\eqref{eq-2.8} has no exponential decaying when $a\neq1$. Thus, the proof is complete.
 \newpage
\section{Polynomial Stability}\label{Section-4}
\subsection{Polynomial Stability in the case $\mathbf{\eta>0}$ and  $\mathbf{a=1}$.}\label{Section-4.1}
\noindent From the subsection \ref{Section-3.1}, if $\eta>0$, $a=1$ and  $b$ satisfies condition ${\rm (SC1)}$, then system \eqref{eq-2.1}-\eqref{eq-2.8} is not uniformly (exponentially) stable, so it is natural to hope for a polynomial stability. For that purpose, we will use a frequency domain approach, namely we will use Theorem  2.4 of \cite{borichev:10} (see also \cite{batkai:06,batty:08,liurao:05}) that we partially recall.
\begin{lemma}\label{bt}
Let $(T(t))_{t\geq 0}$ be a bounded $C_0$-semigroup on a Hilbert space
$H$ with generator $A$ such that $i\R \subset\rho(A)$. Then for a fixed $\ell> 0$ the following
conditions are equivalent
\begin{eqnarray}
\label{(i)}
\|(is-A)^{-1}\| = O(|s|^\ell),\, s \to \infty,\\
\label{(ii)}
\|T(t)A^{-1}\| = O(t^{-1/\ell}),\, t\to \infty.
\end{eqnarray}
\end{lemma}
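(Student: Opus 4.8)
The plan is to establish the two implications separately; throughout, the decisive structural feature is that $H$ is a Hilbert space, which is what licenses the use of Plancherel's theorem and is precisely the reason the statement can fail on general Banach spaces. Write $M=\sup_{t\ge0}\norm{T(t)}$, and note that since $i\R\subset\rho(A)$ the map $s\mapsto R(is,A):=(isI-A)^{-1}$ is continuous, so condition \eqref{(i)} is genuinely a statement about $|s|\to\infty$, the bound for small $|s|$ being automatic; thus one may assume $\norm{R(is,A)}\le C(1+|s|)^\ell$ for all $s\in\R$.

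For the implication \eqref{(ii)} $\Rightarrow$ \eqref{(i)} I would use the Laplace representation. Since $\norm{T(t)A^{-1}}=O(t^{-1/\ell})$ and $T$ is bounded, the vector-valued function $t\mapsto e^{-(\eps+is)t}T(t)A^{-1}$ is integrable, so $R(\eps+is,A)A^{-1}=\int_0^\infty e^{-(\eps+is)t}T(t)A^{-1}\,dt$; letting $\eps\downarrow0$ and controlling the tail by a dyadic decomposition of $\int_0^\infty(1+t)^{-1/\ell}\,dt$-type integrals yields a bound of polynomial order in $|s|$ on $\norm{R(is,A)A^{-1}}$. One then recovers the full resolvent from the algebraic identity $R(is,A)=is\,R(is,A)A^{-1}-A^{-1}$, which costs exactly one power of $|s|$ and reproduces the exponent $\ell$.

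The substantive direction is \eqref{(i)} $\Rightarrow$ \eqref{(ii)}. The quantitative engine is the Hilbert-space Plancherel bound: for $\operatorname{Re}\la=\eps>0$ the vector $R(\la,A)x=\int_0^\infty e^{-\la t}T(t)x\,dt$ is the Fourier transform of $t\mapsto e^{-\eps t}T(t)x\,\mathbf{1}_{t>0}$, whence
\begin{equation*}
\int_\R\norm{R(\eps+is,A)x}^2\,ds=2\pi\int_0^\infty e^{-2\eps t}\norm{T(t)x}^2\,dt\le\frac{\pi M^2}{\eps}\norm{x}^2,
\end{equation*}
together with the analogous estimate for the adjoint semigroup. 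To bring in \eqref{(i)}, I would transfer the imaginary-axis bound to the shifted line through the resolvent identity $R(\eps+is,A)=(I+\eps R(is,A))^{-1}R(is,A)$, which gives $\norm{R(\eps+is,A)}\le 2C(1+|s|)^\ell$ as soon as $\eps\,\norm{R(is,A)}\le\tfrac12$. Representing the classical orbit by the inverse Fourier transform, $e^{-\eps t}T(t)A^{-1}x=\frac{1}{2\pi}\int_\R e^{ist}R(\eps+is,A)A^{-1}x\,ds$, and estimating by Cauchy--Schwarz against the two Plancherel bounds (one factor absorbing $A^{-1}x$, one the adjoint acting on the test vector), each factor contributes $\eps^{-1/2}$; balancing the prefactor $e^{\eps t}\eps^{-1}$ against the admissible range of $\eps$ and the growth $(1+|s|)^\ell$ produces the rate $t^{-1/\ell}$.

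The main obstacle will be making this inversion rigorous: for large $\ell$ the integrand $R(\eps+is,A)A^{-1}x$ is not absolutely integrable in $s$, so the inversion formula must be interpreted in the $L^2$/distributional sense and the Cauchy--Schwarz splitting arranged so that each factor genuinely lands in $L^2(\R)$ — this is exactly where the extra power of $A^{-1}$ (the appearance of $A^{-1}$, rather than the identity, in \eqref{(ii)}) is spent. A secondary difficulty is extracting the sharp exponent $1/\ell$ rather than a lossy power, which requires optimizing the cut-off $\eps$ as a function of $t$ and, in the original argument of Borichev and Tomilov, an interpolation between the integer-power estimates $\norm{T(t)A^{-k}}$. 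Since the statement is quoted verbatim from \cite{borichev:10}, in practice I would simply invoke it; the foregoing is the self-contained route.
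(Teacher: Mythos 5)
The paper does not prove Lemma \ref{bt} at all: it is ``partially recall[ed]'' from Theorem 2.4 of \cite{borichev:10} (with \cite{batkai:06,batty:08,liurao:05} cited alongside), so your closing remark that in practice one would simply invoke that reference is exactly what the paper does. The issue is with your ``self-contained route'', which is the substance of the proposal and which, taken as a proof, has a genuine gap at the heart of the implication \eqref{(i)} $\Rightarrow$ \eqref{(ii)}. In your final Cauchy--Schwarz step \emph{both} factors are controlled by the Plancherel bounds $\bigl(\int_{\R}\|R(\varepsilon+is,A)x\|^2ds\bigr)^{1/2}\le \sqrt{\pi/\varepsilon}\,M\|x\|$ (and its adjoint analogue), which hold for \emph{every} bounded semigroup; the hypothesis \eqref{(i)} never enters the product, so no decay can come out --- carried through, the estimate only reproduces $\|T(t)A^{-1}\|=O(1)$. (There is also a structural point: the integrand $e^{ist}R(\varepsilon+is,A)A^{-1}x$ contains a single resolvent, so there is nothing to split between $x$ and the test vector; pairing one resolvent with $x$ and one adjoint resolvent with $y$ requires passing to $t\,e^{-\varepsilon t}T(t)A^{-1}x$, whose Fourier transform involves $R(\varepsilon+is,A)^2A^{-1}x$.) If instead the factor carrying $A^{-1}x$ is estimated pointwise through \eqref{(i)}, the best available bound is $\|R(\varepsilon+is,A)A^{-1}x\|\le C(1+|s|)^{\ell-1}\|x\|$, which lies in $L^2(\R)$ only when $\ell<\tfrac12$, and even then optimizing in $\varepsilon$ yields a rate of order $t^{-1/2}$, not $t^{-1/\ell}$. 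The correct accounting is quite different: every resolvent factor beyond the two Plancherel ones grows like $(1+|s|)^{\ell}$ and must be neutralized by roughly $\ell$ powers of the inverse (so that $\sup_{s}\|R(\varepsilon+is,A)A^{-m}\|<\infty$, which needs $m\ge \ell$, with fractional powers of $I-A$ if $\ell\notin\N$); running the argument with $t^{N}$ and $N+1$ resolvent factors therefore spends on the order of $N\ell$ powers of the inverse, proves the sharp rate only for $\|T(t)A^{-k}\|$ with $k$ large, and one returns to $k=1$ by the moment inequality $\|T(t)A^{-1}\|\le CM^{1-1/k}\|T(t)A^{-k}\|^{1/k}$. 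So the single power of $A^{-1}$ appearing in \eqref{(ii)} is \emph{not} ``where the $L^2$ splitting is paid for''. This is not pedantry: the balancing argument you describe is precisely the pre-Borichev--Tomilov scheme that yields $t^{-1/\ell}$ only up to a power or logarithmic loss (that is the content of \cite{liurao:05,batkai:06,batty:08}), and removing that loss is the actual theorem the paper is quoting.

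The easy implication \eqref{(ii)} $\Rightarrow$ \eqref{(i)} also does not work as written, although it is repairable. For $\ell\ge 1$ the Laplace-transform estimate gives only $\|R(\varepsilon+is,A)A^{-1}\|\le C\varepsilon^{1/\ell-1}$ (logarithmic in $1/\varepsilon$ if $\ell=1$), a bound that blows up as $\varepsilon\downarrow 0$ and carries no gain in $|s|$, so ``letting $\varepsilon\downarrow0$'' plus a dyadic decomposition cannot produce a polynomial bound in $|s|$. One must either couple $\varepsilon$ to $|s|$ (take $\varepsilon\sim|s|^{-\ell}$, then pass to the imaginary axis by a Neumann series), or, more cleanly, use the finite-horizon identity $R(is,A)x=\int_0^{\tau}e^{-ist}T(t)x\,dt+e^{-is\tau}T(\tau)R(is,A)x$ together with $\|T(\tau)R(is,A)\|\le\|T(\tau)A^{-1}\|\bigl(1+|s|\,\|R(is,A)\|\bigr)$ and the choice $\tau\sim|s|^{\ell}$, which absorbs the remainder and yields $\|R(is,A)\|=O(|s|^{\ell})$ directly.
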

The aim of this section is to prove the following result:

\begin{theoreme}\label{Theorem-4.1}
Assume that $\eta>0$, $a=1$ and that condition ${\rm (SC1)}$ holds. Then, for all initial data $U_0=(u_0,u_1,y_0,y_1,\omega_0)\in D(\AA)$, there exists a constant $c>0$ independent of $U_0$, such that the energy of system \eqref{eq-2.1}-\eqref{eq-2.8} satisfies the following estimation:
\begin{equation}\label{eq-4.1}
E(t)\leq c\frac{1}{t^{p(\alpha)}}\|U_0\|^2_{D(\AA)},\quad \forall t>0,
\end{equation}
where 
$$
p(\alpha)=\left\{\begin{array}{lll}
\displaystyle
\frac{2}{1-\alpha}&\text{if}&b\in \pi \Z,
\\ \noalign{\medskip}
\displaystyle
\frac{2}{5-\alpha}&\text{if}&b\notin \pi \Z.
\end{array}\right.
$$ 
In addition, the energy decay rate \eqref{eq-4.1} is optimal in the sense that for any $\varepsilon >0$, we can not expect the decay rate $\frac{1}{t^{p(\alpha)+\varepsilon}}$ for all initial data $U_0 \in D(\AA)$.
\end{theoreme}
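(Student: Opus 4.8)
The strategy is to apply Lemma \ref{bt} with the operator $A=\AA$, which requires two ingredients: first, that $i\R\subset\rho(\AA)$, and second, a sharp resolvent bound $\|(i s I-\AA)^{-1}\|=O(|s|^{\ell})$ as $|s|\to\infty$, with $\ell=1-\alpha$ when $b\in\pi\Z$ and $\ell=5-\alpha$ when $b\notin\pi\Z$. The inclusion $i\R\subset\rho(\AA)$ for $\eta>0$ follows from Lemma \ref{Theorem-2.5} (no imaginary eigenvalues under (SC), which (SC1) implies) together with Lemma \ref{Theorem-2.9} (surjectivity of $i\la I-\AA$ for all $\la\in\R$ when $\eta>0$) and the closed graph theorem. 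Once the resolvent estimate with the claimed $\ell$ is established, condition \eqref{(i)} gives \eqref{(ii)}, i.e.\ $\|T(t)\AA^{-1}\|=O(t^{-1/\ell})$; since $E(t)=\tfrac12\|T(t)U_0\|_\HH^2$ and $\|T(t)U_0\|_\HH=\|T(t)\AA^{-1}(\AA U_0)\|_\HH\leq \|T(t)\AA^{-1}\|\,\|U_0\|_{D(\AA)}$, squaring yields \eqref{eq-4.1} with $p(\alpha)=2/\ell$, matching the two stated cases.

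The heart of the argument is the resolvent estimate, which I would prove by contradiction via a frequency-domain (multiplier) argument. Suppose \eqref{(i)} fails for the claimed $\ell$; then there exist sequences $\la_n\in\R$ with $|\la_n|\to\infty$ and $U_n=(u_n,v_n,y_n,z_n,\omega_n)\in D(\AA)$ with $\|U_n\|_\HH=1$ such that $\la_n^{\ell}(i\la_n I-\AA)U_n=F_n\to 0$ in $\HH$. Writing out the five resolvent equations as in \eqref{eq-2.47}, the dissipation identity gives immediately that $\int_\R(\xi^2+\eta)|\omega_n|^2 d\xi = o(\la_n^{-\ell})$, from which, using the explicit formula for $\omega_n$ in terms of $v_n(1)=i\la_n u_n(1)+o(1)$ and the integral $\kappa(\alpha)\int_\R \tfrac{|\xi|^{\alpha-1}}{\la_n+\xi^2+\eta}\,d\xi\sim(\la_n+\eta)^{\alpha-1}$, one extracts the boundary damping estimate $|\la_n u_n(1)|^2=o(\la_n^{\ell-1+\alpha})$, equivalently $|u_n(1)|=o(\la_n^{(\ell-3+\alpha)/2})$. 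The goal is then to propagate this boundary information into the interior and contradict $\|U_n\|_\HH=1$. I would use the multiplier $q(x)u_{n,x}$ (and its analogue for $y_n$) with a suitable weight $q$, together with the coupling relations \eqref{eq-2.27}--\eqref{eq-2.28}, to bound the $H^1\times L^2$ norms of $(u_n,v_n)$ and $(y_n,z_n)$ by the boundary terms at $x=1$ plus lower-order interior terms; the equal-speed condition $a=1$ is what makes the two wave operators share the same characteristic scale and lets the multiplier method close.

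The main obstacle, and the place where the two cases $b\in\pi\Z$ and $b\notin\pi\Z$ genuinely diverge, is controlling the coupling term $b\la_n y_n$ in the equation for $u_n$ (and $b\la_n u_n$ in the equation for $y_n$): the indirectly damped component $y_n$ must be estimated through the coupling, and the efficiency of this transfer is governed by the spectral gap structure revealed in Proposition \ref{Theorem-3.2}. When $b\notin\pi\Z$, the first branch of eigenvalues approaches the imaginary axis only at rate $\Re\la_{1,n}\sim \la_n^{-(1-\alpha)}$ (see \eqref{eq-3.15}), so the resolvent blows up only like $|s|^{1-\alpha}$ and $\ell=1-\alpha$; when $b\in\pi\Z$, the relevant branch \eqref{eq-3.17} approaches the axis far more slowly, at rate $\Re\la_{1,n}\sim\la_n^{-(5-\alpha)}$, forcing $\ell=5-\alpha$. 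Concretely, the contradiction argument must show that the interior energy of $(u_n,y_n)$ is controlled by $|u_n(1)|^2$ times a factor that is $O(\la_n^{2})$ when $b\notin\pi\Z$ but only $O(\la_n^{6})$ when $b\in\pi\Z$, the extra $\la_n^4$ reflecting the degenerate coupling at $b\in\pi\Z$ where $1-\cos b=0$. Finally, optimality is obtained directly from Proposition \ref{Theorem-3.2}: the asymptotic expansions \eqref{eq-3.15} and \eqref{eq-3.17} exhibit eigenvalues with $|\Re\la_n|\sim \la_n^{-\ell}$, so along $s=\Im\la_n$ the resolvent satisfies $\|(isI-\AA)^{-1}\|\gtrsim |s|^{\ell}$, which by the converse direction of the Borichev--Tomilov characterization rules out any faster decay rate $t^{-(p(\alpha)+\varepsilon)}$.
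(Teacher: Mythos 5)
Your overall framework is the same as the paper's: Borichev--Tomilov (Lemma \ref{bt}) together with $i\R\subset\rho(\AA)$ from Lemmas \ref{Theorem-2.5} and \ref{Theorem-2.9}, a contradiction argument for the resolvent estimate, the dissipation identity yielding $\int_\R(\xi^2+\eta)|\omega_n|^2d\xi=o(\la_n^{-\ell})$ and the boundary estimates of Lemma \ref{Theorem-4.2}, the multiplier identity of Lemma \ref{Theorem-4.3} to convert boundary information at $x=1$ into interior control, and optimality via the eigenvalue expansions of Proposition \ref{Theorem-3.2} (your lower resolvent bound along $s=\Im\la_n$ is exactly the paper's Step 2). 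But the central quantitative step is missing: the proof must establish $|y_x(1)|^2=o\left(\la^{-(\ell-1+\alpha)}\right)$ when $b\notin\pi\Z$ and $|y_x(1)|^2=o\left(\la^{-(\ell-5+\alpha)}\right)$ when $b\in\pi\Z$ (Lemma \ref{Theorem-4.4}), and this is \emph{not} obtained by multipliers. The paper rewrites \eqref{eq-4.23}--\eqref{eq-4.24} as the first-order system \eqref{eq-4.28}, computes the fundamental matrices $e^{\pm B}$ explicitly in terms of $A_1=\cos(\la)\cos\left(\frac{b}{2}\right)$, $A_2=\sin(\la)\cos\left(\frac{b}{2}\right)$, $A_3=\sin(\la)\sin\left(\frac{b}{2}\right)$, $A_4=\cos(\la)\sin\left(\frac{b}{2}\right)$, and reads the boundary relations off $e^{-B}Y(1)=Y(0)+o(\la^{-\ell})$, \eqref{eq-4.31}. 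When $b\in\pi\Z$, half of these coefficients vanish identically, the leading $O(\la)$ and $O(1)$ entries of $e^{-B}$ disappear, and only entries with prefactors $\la^{-1},\la^{-2}$ survive; combining \eqref{eq-4.35}--\eqref{eq-4.38} with $A_3^2+A_4^2=1$ costs exactly a factor $\la^{4}$, which is precisely where $\ell=5-\alpha$ comes from. Your proposal replaces this computation by an appeal to ``the spectral gap structure revealed in Proposition \ref{Theorem-3.2}'', but eigenvalue locations alone cannot yield an \emph{upper} resolvent bound along the whole imaginary axis for a non-normal generator (no Riesz-basis property is established anywhere in the paper); Proposition \ref{Theorem-3.2} furnishes only the lower bound used for optimality. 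Thus your statement that the interior energy is controlled by $|u_n(1)|^2$ times $O(\la_n^2)$, resp.\ $O(\la_n^6)$, is exactly the content of Lemma \ref{Theorem-4.4}: it is asserted, not proved, and without the $e^{\pm B}$ computation (or an equivalent shooting/determinant analysis of the $4\times4$ system) the contradiction argument does not close.

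Two further slips should be fixed. First, your boundary estimate has the exponent sign reversed: it should read $|\la_n u_n(1)|^2=o\left(\la_n^{-(\ell+\alpha-1)}\right)$ and $|u_n(1)|=o\left(\la_n^{-(\ell+\alpha+1)/2}\right)$, as in \eqref{eq-4.21} and \eqref{eq-4.13}; your version $|u_n(1)|=o\left(\la_n^{(\ell-3+\alpha)/2}\right)$ happens to coincide only in the case $\ell=1-\alpha$ and is badly wrong for $\ell=5-\alpha$. Second, your opening paragraph assigns $\ell=1-\alpha$ to $b\in\pi\Z$ and $\ell=5-\alpha$ to $b\notin\pi\Z$, while your third paragraph assigns them the other way; the latter is the correct assignment, consistent with \eqref{eq-3.15}, \eqref{eq-3.17} and the paper's own choice of $\ell$ after the theorem (note that the printed statement of Theorem \ref{Theorem-4.1} itself swaps the two cases of $p(\alpha)$ relative to the abstract, the introduction, and the proof -- you have inherited that misprint in one place and silently corrected it in another, and the internal inconsistency must be resolved).
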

\noindent Since condition $i\R \subset\rho(A)$ was already checked in Theorem \ref{Theorem-2.4} in the case $\eta>0$, then the proof of  Theorem \ref{Theorem-4.1} is reduced to show that condition \eqref{(i)} holds 
with
$$
\ell=\left\{\begin{array}{lll}
1-\alpha&\text{if}&b\notin \pi\Z,
\\ \noalign{\medskip}
5-\alpha&\text{if}&b\in \pi\Z.
\end{array}
\right.
$$
This is checked by using a contradiction  argument. Indeed assume that it does not hold, then  there exist a sequence $\la_{n}\in\mathbb{R}, n\in \mathbb{N}$ 
such that $\la_{n}\to +\infty $ as $n\to +\infty$, and a sequence $U_{n}=(u^n,v^n,y^n,z^n,\omega^n) \in D(\AA)$  such that 
\begin{equation}\label{eq-4.2}
\|U^n\|_{\HH}=\|\left(u^n,v^n,y^n,z^n,\omega^n\right)\|_{\HH}=1,
\end{equation}
\begin{equation}\label{eq-4.3}
\la_n^{\ell}\left(i\la_nI-\AA\right)U^n=\left(f_1^n,g_1^n,f_2^n,g_2^n,f_3^n\right)\rightarrow 0\ \ \text{in}\ \ \HH.
\end{equation}
For simplificity, we replace $\la_{n}$ by $\la$;
  $U_{n}=(u^{n},v^{n},y^n,z^{n},\omega^n)$ by $U=(u,v,y,z,\omega)$ and   $F_{n}=\la_{n}^{l}(i\la_{n}I-\mathcal{A})U_{n}=(f_{1}^n,g_{1}^n,f_{2}^n,g_{2}^n,f_3^n)$ by $F=(f_{1},g_{1},f_{2},g_{2},f_3)$. Next, by detailing \eqref{eq-4.3}, we obtain 
\begin{eqnarray}
i\la u-v&=&\frac{f_1}{\la^{\ell}}\longrightarrow 0\ \ \text{in}\ \ H_L^1(0,1),\label{eq-4.4}\\
i\la v-u_{xx}+bz&=&\frac{g_1}{\la^{\ell}}\longrightarrow 0\ \ \text{in}\ \ L^2(0,1),\label{eq-4.5}\\
i\la y-z&=&\frac{f_2}{\la^{\ell}}\longrightarrow 0\ \ \text{in}\ \ H_0^1(0,1),\label{eq-4.6}\\
i\la z-y_{xx}-bv&=&\frac{g_2}{\la_n^{\ell}}\longrightarrow 0\ \ \text{in}\ \ L^2(0,1),\label{eq-4.7}\\
(i\la+\xi^2+\eta)\omega(\xi)-v(1)\mu(\xi)&=&\frac{f_3(\xi)}{\la^{\ell}}\longrightarrow 0\ \ \text{in}\ \ L^2(\R)\label{eq-4.8}.
\end{eqnarray}
First, multiplying equation \eqref{eq-4.3} by $U$ in $\mathcal{H}$, we get
\begin{equation}\label{eq-4.9}
-\gamma\kappa(\alpha)\int_{\R}(\xi^2+\eta)|\omega(\xi)|^2d\xi=\Re\left(\left<\left(i\la I-\AA\right)U,U\right>_{\HH}\right)=\frac{o(1)}{\la^{\ell}}.
\end{equation}
 For clarity, we divide the proof into several Lemmas. 
\begin{lemma}\label{Theorem-4.2} 
\noindent Assume that $\eta>0$, $a=1$ and that condition ${\rm (SC1)}$ holds. Then, the solution $(u,v,y,z,\omega)\in D(\AA)$ of \eqref{eq-4.4}-\eqref{eq-4.8} satisfies the following asymptotic behavior estimation
\begin{eqnarray}
\|u\|&=&\frac{O(1)}{\la},\label{eq-4.10}\\
\|y\|&=&\frac{O(1)}{\la},\label{eq-4.11}\\
|u_x(1)|&=&\frac{o(1)}{\la^{\frac{\ell}{2}}},\label{eq-4.12}\\
|u(1)|&=&\frac{o(1)}{\la^{\frac{\ell+\alpha+1}{2}}}.\label{eq-4.13}
\end{eqnarray}  
\end{lemma}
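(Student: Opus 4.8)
The plan is to read off the four estimates in order of increasing difficulty from the system \eqref{eq-4.4}--\eqref{eq-4.8}, the normalization \eqref{eq-4.2}, and above all the dissipation identity \eqref{eq-4.9}. First I would obtain \eqref{eq-4.10} and \eqref{eq-4.11} directly. From \eqref{eq-4.4} we have $i\la u=v+\la^{-\ell}f_1$, hence $\la\|u\|\leq\|v\|+\la^{-\ell}\|f_1\|$; since $\|v\|\leq\|U\|_{\HH}=1$ is bounded and $f_1\to0$ in $H_L^1(0,1)$, this gives $\|u\|=O(1)/\la$. The same manipulation applied to \eqref{eq-4.6}, using $\|z\|\leq1$ and $f_2\to0$, yields $\|y\|=O(1)/\la$.

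Next, for \eqref{eq-4.12} I would use the boundary condition $u_x(1)=-\gamma\kappa(\alpha)\int_{\R}\mu(\xi)\omega(\xi)d\xi$ satisfied by every element of $D(\AA)$. Writing $\mu=\big(\mu/\sqrt{\xi^2+\eta}\big)\sqrt{\xi^2+\eta}$ and applying Cauchy--Schwarz,
\[
|u_x(1)|\leq\gamma\kappa(\alpha)\Big(\int_{\R}\frac{\mu^2(\xi)}{\xi^2+\eta}d\xi\Big)^{1/2}\Big(\int_{\R}(\xi^2+\eta)|\omega(\xi)|^2d\xi\Big)^{1/2}.
\]
The first integral is finite for $\alpha\in\,]0,1[$ and $\eta>0$ (since $\mu^2=|\xi|^{2\alpha-1}$ is integrable both near $0$ and at infinity against $(\xi^2+\eta)^{-1}$), so it is a harmless constant, while the second is exactly the dissipation, which by \eqref{eq-4.9} is $o(1)/\la^{\ell}$. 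Hence $|u_x(1)|=o(1)/\la^{\ell/2}$.

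The heart of the lemma is \eqref{eq-4.13}. Here I would solve \eqref{eq-4.8} for $\omega$, namely $\omega(\xi)=\big(v(1)\mu(\xi)+\la^{-\ell}f_3(\xi)\big)/(i\la+\xi^2+\eta)$, split it as $\omega=\omega_1+\omega_2$ according to the two numerators, and measure everything in the weighted norm $N(\cdot)=\big(\int_{\R}(\xi^2+\eta)|\cdot|^2d\xi\big)^{1/2}$. By the triangle inequality $N(\omega_1)\leq N(\omega)+N(\omega_2)$; here $N(\omega)^2=o(1)/\la^{\ell}$ by \eqref{eq-4.9}, and $N(\omega_2)^2\leq\la^{-2\ell}\tfrac{1}{2\la}\|f_3\|_{L^2(\R)}^2=o(1)/\la^{2\ell+1}$ using $\tfrac{\xi^2+\eta}{\la^2+(\xi^2+\eta)^2}\leq\tfrac1{2\la}$, so $N(\omega_1)=o(1)/\la^{\ell/2}$. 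On the other hand $N(\omega_1)^2=|v(1)|^2\,c_2(\la,\eta,\alpha)/(\gamma\kappa(\alpha))$ with $c_2$ as in \eqref{eq-2.44}. The decisive input is the sharp asymptotics of $c_2$: from the explicit evaluation $\gamma\kappa(\alpha)\int_{\R}\mu^2(\xi)/(i\la+\xi^2+\eta)\,d\xi=\gamma(i\la+\eta)^{\alpha-1}$ (the computation that produced the symbol $(\la+\eta)^{\alpha-1}$ in Section~\ref{Section-3}), whose real part is $c_2$, one gets $c_2(\la,\eta,\alpha)\sim\gamma\sin\!\big(\tfrac{\pi\alpha}{2}\big)\la^{\alpha-1}$, hence $c_2\geq c\,\la^{\alpha-1}$ for large $\la$. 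Combining, $|v(1)|^2=o(1)/\la^{\ell+\alpha-1}$, i.e. $|v(1)|=o(1)/\la^{(\ell+\alpha-1)/2}$. Finally $v(1)=i\la u(1)-\la^{-\ell}f_1(1)$, and since $f_1\to0$ in $H_L^1(0,1)\hookrightarrow C[0,1]$ we have $f_1(1)\to0$; as $(\ell+\alpha-1)/2<\ell$ (because $\alpha-1<\ell$), the $v(1)$-contribution dominates and $|u(1)|=o(1)/\la^{(\ell+\alpha+1)/2}$, which is \eqref{eq-4.13}.

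I expect the main obstacle to be the third step: establishing the two-sided asymptotics $c_2(\la,\eta,\alpha)\asymp\la^{\alpha-1}$ (for which the explicit integral evaluation above is the clean route) and verifying that the $\la^{-\ell}f_3$ remainder $\omega_2$ and the cross term are genuinely of higher order than the $v(1)$-term in the dissipation integral, rather than comparable to it. Everything else is bookkeeping with the normalization \eqref{eq-4.2} and the convergences in \eqref{eq-4.4}--\eqref{eq-4.8}.
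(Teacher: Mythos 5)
Your proof is correct. For \eqref{eq-4.10}, \eqref{eq-4.11} and \eqref{eq-4.12} it coincides with the paper's argument (uniform boundedness of $v,z$ plus \eqref{eq-4.4}, \eqref{eq-4.6}, and Cauchy--Schwarz against the dissipation \eqref{eq-4.9} for the boundary term). For the key estimate \eqref{eq-4.13}, however, you take a genuinely different route. The paper starts from the pointwise bound $|v(1)|\mu(\xi)\leq(|\la|+\xi^2+\eta)|\omega(\xi)|+|\la|^{-\ell}|f_3(\xi)|$ obtained from \eqref{eq-4.8}, multiplies by the weight $(|\la|+\xi^2+\eta)^{-2}|\xi|$, integrates, and applies Cauchy--Schwarz; it must then evaluate three explicit integrals $A_1,A_2,A_3$ by substitution (in particular $A_1$ equals a positive constant times $(|\la|+\eta)^{\frac{\alpha}{2}-\frac{5}{4}}$, which plays the role of your lower bound), feeding in \eqref{eq-4.9} through $\int_{\R}|\xi\omega|^2d\xi\leq\int_{\R}(\xi^2+\eta)|\omega|^2d\xi$. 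You instead solve \eqref{eq-4.8} for $\omega$ exactly, peel off the $\la^{-\ell}f_3$ remainder, and identify the weighted norm of the $v(1)$-part exactly as $|v(1)|^2c_2(\la,\eta,\alpha)/(\gamma\kappa(\alpha))$ with $c_2$ from \eqref{eq-2.44}; the decisive lower bound $c_2\geq c\,\la^{\alpha-1}$ then comes from the closed-form evaluation $\gamma\kappa(\alpha)\int_{\R}\mu^2(\xi)(i\la+\xi^2+\eta)^{-1}d\xi=\gamma(i\la+\eta)^{\alpha-1}$, i.e.\ from $\Re\big[(i\la+\eta)^{\alpha-1}\big]\sim\sin\big(\tfrac{\pi\alpha}{2}\big)\la^{\alpha-1}>0$, which is precisely the formula the paper itself invokes in Section \ref{Section-3}. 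Both arguments rest on the same two pillars (the dissipation identity \eqref{eq-4.9} and a quantitative lower bound for a fractional integral), and both land on the paper's intermediate estimate \eqref{eq-4.21}, $|v(1)|=o(1)/\la^{\frac{\ell+\alpha-1}{2}}$, after which the passage to $u(1)$ via \eqref{eq-4.4}, including the check that the $v(1)$-term dominates $\la^{-\ell}f_1(1)$, is identical. What your version buys is economy and transparency: the identity for the weighted norm of the $v(1)$-part is exact (no Cauchy--Schwarz loss on the main term), the ad hoc weight and the computations of $A_1,A_2,A_3$ disappear, and the origin of the exponent $\frac{\ell+\alpha-1}{2}$ is laid bare. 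What the paper's version buys is that it never needs the complex closed form $(i\la+\eta)^{\alpha-1}$ at real frequencies: it manipulates only real integrals, at the cost of the substitution computations.
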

\begin{proof}
First, since $v$ and $z$ are uniformly bounded in $L^2(0,1)$, then from equations \eqref{eq-4.4} and \eqref{eq-4.6}, we deduce directly the estimations \eqref{eq-4.10} and \eqref{eq-4.11}. Next, from the boundary condition 
$$
u_x(1)+\gamma\kappa(\alpha)\int_{\R}\mu(\xi)\omega(\xi)d\xi=0,
$$
we get 
\begin{equation}\label{eq-4.14}
\left|u_x(1)\right|\leq \gamma\kappa(\alpha)\left(\int_{\R}\frac{\mu^2(\xi)}{\xi^2+\eta}d\xi\right)^{\frac{1}{2}}\left(\int_{\R}\left(\xi^2+\eta\right)|\omega(\xi)|^2d\xi\right)^{\frac{1}{2}}.
\end{equation}
Then, combining equation \eqref{eq-4.9} and \eqref{eq-4.14}, we obtain the desired estimation \eqref{eq-4.12}. Next, from \eqref{eq-4.8}, we get
\begin{equation}\label{eq-4.15}
\left|v(1)\right| \left|\xi\right|^{\alpha-\frac{1}{2}} \leq\left(\left|\lambda\right|+\xi^2+\eta\right)\left|\omega(\xi)\right|+\frac{1}{\left|\lambda\right|^{\ell}}\left|f_{3}(\xi)\right|.
\end{equation}
Multiplying equation \eqref{eq-4.15} by $(\left|\lambda\right|+\xi^2+\eta)^{-2}\left|\xi\right|$, integrating over $\mathbb{R}$ with respect to the variable $\xi$ and applying Cauchy-Shariwz inequality, we obtain
\begin{equation}\label{eq-4.16}
\left|v(1)\right| A_1  \leq A_2\left(\int_{\mathbb{R}}\left|\xi\omega(\xi)\right|^2d\xi\right)^{\frac{1}{2}}+\frac{A_3}{\left|\lambda\right|^{\ell}}\left(\int_{\mathbb{R}}\left|f_{3}(\xi)\right|^2d\xi\right)^{\frac{1}{2}},
\end{equation}
where
$$A_1=\int_{\mathbb{R}}\frac{ \left|\xi\right|^{\alpha+\frac{1}{2}}}{(\left|\lambda\right|+\xi^2+\eta)^{2}}d\xi,\ A_2=\left(\int_{\mathbb{R}}\frac{1}{\left(\left|\lambda\right|+\xi^2+\eta\right)^2}d\xi\right)^{\frac{1}{2}},\ A_3=\left(\int_{\mathbb{R}}\frac{ \xi^2}{(\left|\lambda\right|+\xi^2+\eta)^{4}} d\xi\right)^{\frac{1}{2}}.$$
It easy to check that 
\begin{equation}\label{eq-4.17}
A_2=\sqrt{\frac{\pi}{2}}\frac{1}{\left(\left|\lambda\right|+\eta\right)^{\frac{3}{4}}}\ \ \ \text{and}\ \ \ A_3=\frac{\sqrt{\pi}}{4}\frac{1}{\left(\left|\lambda\right|+\eta\right)^{\frac{5}{4}}}.
\end{equation}
Moreover, we have
\begin{equation}\label{eq-4.18}
I_1=\frac{2}{\left(\left|\lambda\right|+\eta\right)^{2}}\int_{0}^{\infty}\frac{ \xi^{\alpha+\frac{1}{2}}}{\left(1+\frac{\xi^2}{\left|\lambda\right|+\eta}\right)^{2}}d\xi.
\end{equation}
Thus equation \eqref{eq-4.18} may be simplified by defining a new variable $y=1+\frac{\xi^2}{\left|\lambda\right|+\eta}$.  Substituting $\xi$  by $\left(y-1\right)^{\frac{1}{2}}\left(\left|\lambda\right|+\eta\right)^{\frac{1}{2}}$ in equation \eqref{eq-4.18}, we get
\begin{equation*}
I_1=\left(\left|\lambda\right|+\eta\right)^{\frac{\alpha}{2}-\frac{5}{4}}\int_{1}^{\infty}\frac{\left(y-1\right)^{\frac{\alpha}{2}-\frac{1}{4}}}{y^2}dy.
\end{equation*}
Using the fact that $\alpha\in ]0,1[$, it easy to see that $y^{-2}\left(y-1\right)^{\frac{\alpha}{2}-\frac{1}{4}}\in L^1(1,+\infty)$, therefore we have
\begin{equation}\label{eq-4.19}
A_1=c_1\left(|\lambda|+\eta\right)^{\frac{\alpha}{2}-\frac{5}{4}}.
\end{equation}
where $c_1$ is a positive constant number. Inserting \eqref{eq-4.17} and  \eqref{eq-4.19} in  \eqref{eq-4.16}, then using \eqref{eq-4.3} and \eqref{eq-4.9},   we deduce that
\begin{equation}\label{eq-4.20}
 \left|v(1)\right|   \leq \sqrt{\frac{\pi}{2}}\frac{1}{c_1\left(|\lambda|+\eta\right)^{\frac{\alpha}{2}-\frac{1}{2}} } \frac{o\left(1\right)}{|\lambda|^{\frac{\ell}{2}}}+\frac{\sqrt{\pi}}{4}\frac{1}{c_1\left(|\lambda|+\eta\right)^{\frac{\alpha}{2}}}\frac{o(1)}{\left|\lambda\right|^{\ell}}.
\end{equation}
Since $\alpha\in ]0,1[$ and $\ell>0$, we have $\min\left(\frac{\ell+\alpha-1}{2},\ell+\frac{\alpha}{2},\ell\right)=\frac{\ell+\alpha-1}{2}$, hence from \eqref{eq-4.20}, we get
\begin{equation}\label{eq-4.21}
|v(1)|=\frac{o(1)}{\lambda^{\frac{\ell+\alpha-1}{2}}}.
\end{equation} 
Finally, combining equation equations \eqref{eq-4.4} and \eqref{eq-4.21}, we get the desired estimation \eqref{eq-4.13}. The proof is thus complete.
\end{proof}
\begin{lemma}\label{Theorem-4.3}
Let $h\in W^{1,\infty}(0,1)$. Assume that $\eta>0$, $a=1$, and condition ${\rm (SC1)}$ holds. Then, the solution $U=(u,v,y,z,\omega)\in D(\AA)$ of system \eqref{eq-4.4}-\eqref{eq-4.8} satisfies the following estimation 
\begin{equation}\label{eq-4.22}
-\int_0^1h'\left(|\la u|^2+|\la y|^2+|u_x|^2+|y_x|^2\right)dx+h(1)|y_x(1)|^2-h(0)|y_x(0)|^2-h(0)|u_x(0)|^2=\frac{o(1)}{\la^{\ell-1+\alpha}}+\frac{O(1)}{\la}.
\end{equation}
In particular, we have 
\begin{equation}\label{eq-4.27}
\left|y_x(1)\right|^2-\left|y_x(0)\right|^2-\left|u_x(0)\right|^2=\frac{o(1)}{\la^{\ell-1+\alpha}}.
\end{equation}
\end{lemma}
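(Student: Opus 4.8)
The plan is to reduce \eqref{eq-4.4}--\eqref{eq-4.8} to a second-order coupled system and then run the multiplier method with weight $h$, the one genuinely delicate point being the treatment of the velocity-coupling terms. First I would eliminate $v$ and $z$: from \eqref{eq-4.4} and \eqref{eq-4.6} write $v=i\la u-\la^{-\ell}f_1$ and $z=i\la y-\la^{-\ell}f_2$ and substitute into \eqref{eq-4.5} and \eqref{eq-4.7}. Since $a=1$ this yields the reduced system
\begin{equation*}
\la^2u+u_{xx}-i\la b y=\Phi_1,\qquad \la^2y+y_{xx}+i\la b u=\Phi_2,\qquad u(0)=y(0)=y(1)=0,
\end{equation*}
with $\Phi_1=-\la^{-\ell}(g_1+i\la f_1+bf_2)$ and $\Phi_2=-\la^{-\ell}(g_2+i\la f_2-bf_1)$.

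Next I would multiply the first equation by $h\overline{u_x}$ and the second by $h\overline{y_x}$, integrate over $(0,1)$ and take real parts. Because $h$ is real, $\Re(u\overline{u_x})=\tfrac12(|u|^2)_x$ and $\Re(u_{xx}\overline{u_x})=\tfrac12(|u_x|^2)_x$, so after one integration by parts each left-hand side produces the bulk term $-\tfrac12\int_0^1 h'(\la^2|u|^2+|u_x|^2)$ (and the analogue for $y$) together with boundary contributions $\tfrac{\la^2}{2}[h|u|^2]_0^1+\tfrac12[h|u_x|^2]_0^1$. Using $u(0)=y(0)=y(1)=0$, the terms $[h|y|^2]_0^1$ and the $x=0$ position terms vanish, leaving only $\tfrac{\la^2}{2}h(1)|u(1)|^2+\tfrac12 h(1)|u_x(1)|^2-\tfrac12 h(0)|u_x(0)|^2$ from the $u$-identity and $\tfrac12 h(1)|y_x(1)|^2-\tfrac12 h(0)|y_x(0)|^2$ from the $y$-identity.

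The main obstacle is the coupling. Summing the two identities, the coupling contributions combine into $\Re\big(i\la b\int_0^1 h(y\overline{u_x}-u\overline{y_x})\big)$, which a naive estimate only controls by $O(1)$. Integrating by parts and using $u(0)=y(0)=y(1)=0$ to kill the boundary term $[hy\overline u]_0^1$, I would rewrite this as $-\Re\big(i\la b\int_0^1 h'\,y\overline u\big)-\Re\big(i\la b\int_0^1 h\,2\Re(y_x\overline u)\big)$. Since $\la$ and $b$ are real and $h$ is real-valued, the second integrand is real, so $i\la b$ times it is purely imaginary and contributes nothing to the real part; only $-\Re\big(i\la b\int_0^1 h'\,y\overline u\big)$ survives, which by \eqref{eq-4.10}--\eqref{eq-4.11} is bounded by $|b|\,\|h'\|_\infty\,\la\,\|u\|\,\|y\|=\frac{O(1)}{\la}$. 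This skew-symmetric cancellation is exactly what upgrades the coupling from $O(1)$ to $\frac{O(1)}{\la}$. For the forcing, $\Re\int\Phi_j h\overline{u_x}$ and $\Re\int\Phi_j h\overline{y_x}$, the only dangerous pieces are $\la^{1-\ell}f_1$ and $\la^{1-\ell}f_2$; for these I would integrate by parts once to move the derivative onto $f_1,f_2$ (the boundary terms being controlled by \eqref{eq-4.13} and $\|f_j\|_{H^1}=o(1)$), after which \eqref{eq-4.3} together with \eqref{eq-4.10}--\eqref{eq-4.11} gives $\frac{o(1)}{\la^{\ell}}$.

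Finally, collecting the pieces and multiplying by $2$, the surviving $x=1$ boundary terms are estimated through Lemma \ref{Theorem-4.2}: by \eqref{eq-4.13} one has $\la^2 h(1)|u(1)|^2=\frac{o(1)}{\la^{\ell+\alpha-1}}$, and by \eqref{eq-4.12} one has $h(1)|u_x(1)|^2=\frac{o(1)}{\la^{\ell}}$. Moving these to the right-hand side and noting that $\frac{o(1)}{\la^{\ell}}$ is dominated by $\frac{o(1)}{\la^{\ell-1+\alpha}}$ (as $1-\alpha>0$) produces exactly \eqref{eq-4.22}. For \eqref{eq-4.27} I would simply take $h\equiv 1$: then $h'\equiv 0$ annihilates both the bulk integrals and, by the paragraph above, the entire coupling term, so that $|y_x(1)|^2-|y_x(0)|^2-|u_x(0)|^2$ equals only the $x=1$ boundary remainders, namely $\frac{o(1)}{\la^{\ell-1+\alpha}}$.
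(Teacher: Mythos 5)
Your proposal is correct and follows essentially the same route as the paper's proof: the same reduction to the second-order system, the same multipliers $h\overline{u_x}$, $h\overline{y_x}$, the same skew-symmetric cancellation $\Re\left(i\la b\int_0^1 h\,2\Re\left(y_x\overline{u}\right)dx\right)=0$ after one integration by parts (leaving only the $h'$-coupling term of size $\frac{O(1)}{\la}$), the same integration by parts on the $\la^{1-\ell}f_j$ forcing terms, and the same use of Lemma \ref{Theorem-4.2} for the $x=1$ boundary terms. Your explicit remark that \eqref{eq-4.27} carries no $\frac{O(1)}{\la}$ remainder precisely because that term originates from $h'$ is a point the paper leaves implicit, but the argument is the same.
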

\begin{proof}
Substitute $v$ and $z$ in equations \eqref{eq-4.5} and \eqref{eq-4.7} by \eqref{eq-4.4} and \eqref{eq-4.6} respectively, we obtain the following system 
\begin{eqnarray}
\la^2 u+u_{xx}-i\la b y&=&-\frac{g_1+i\la f_1+bf_2}{\la^{\ell}},\label{eq-4.23}\\
\la^2 y+y_{xx}+i\la bu&=&-\frac{g_2+i\la f_2-bf_1}{\la^{\ell}}. \label{eq-4.24}
\end{eqnarray}
Multiplying equation \eqref{eq-4.23} by $2h\overline{u_x}$, integrating by parts and using Lemma \ref{Theorem-4.2}, we get 
\begin{equation}\label{eq-4.25}
-\int_0^1h'|\la u|^2dx-\int_0^1h'|u_x|^2dx-h(0)|u_x(0)|^2+2\Re\left\{i\la b\int_0^1hy_x\bar{u}dx\right\}=\frac{O(1)}{\la}+\frac{o(1)}{\la^{\ell-1+\alpha}}.
\end{equation}
Similarly, multiplying equation \eqref{eq-4.24} by $2h\overline{y_x}$, integrating by parts and using Lemma \ref{Theorem-4.2}, we get 
\begin{equation}\label{eq-4.26}
-\int_0^1h'|\la y|^2-\int_0^1h'|y_x|^2+h(1)|y_x(1)|^2-h(0)|y_x(0)|^2dx+2\Re\left\{i\la b\int_0^1hu\overline{y_x}dx\right\}=\frac{o(1)}{\la^{\ell}}.
\end{equation}
Note that, since $f_2$ converges to zero in $H_0^1(0,1)$ and $\la y$ is uniformly bounded in $L^2(0,1)$, then 
$$
\frac{1}{\la^{\ell}}\int_0^1\la f_2h\overline{y_x}dx=-\frac{1}{\la^{\ell}}\int_0^1\la\overline{y}\left((f_2)_xh+f_2h_x\right)dx=\frac{o(1)}{\la^{\ell}}.
$$
Combining equations \eqref{eq-4.25} and \eqref{eq-4.26}, we get the desired estimation \eqref{eq-4.22}. Finally, by tacking $h=1$ in equation \eqref{eq-4.22}, we obtain \eqref{eq-4.27}. The proof is thus complete.
\end{proof}
 
\begin{lemma}\label{Theorem-4.4}
Assume that $\eta>0$, $a=1$ and that condition ${\rm (SC1)}$ holds. Then, the solution $U=(u,v,y,z,\omega)\in D(\AA)$ of system \eqref{eq-4.23}-\eqref{eq-4.24} satisfies the following estimation 
\begin{equation*}
\left|y_x(1)\right|^2=\frac{o(1)}{\la^{\delta(\alpha)}},
\end{equation*}
where
$$
\delta(\alpha)=\left\{\begin{array}{lll}
\ell-1+\alpha&\text{if}&b\notin \pi\Z,\\
\ell-5+\alpha&\text{if}&b\in \pi\Z.
\end{array}
\right.
$$
\end{lemma}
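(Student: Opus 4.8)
The plan is to use the equal–speed hypothesis $a=1$ to diagonalise the velocity coupling and reduce \eqref{eq-4.23}--\eqref{eq-4.24} to two scalar Helmholtz problems from which $y_x(1)$ can be read off directly. Setting $w_{\pm}=u\pm iy$ and forming \eqref{eq-4.23}$\,\pm\, i\,$\eqref{eq-4.24}, the zero–order coupling collapses into a constant shift of the frequency, giving
\[
(w_{\pm})_{xx}+\mu_{\pm}^{2}\,w_{\pm}=G_{\pm},\qquad \mu_{\pm}=\sqrt{\la^{2}\mp b\la},
\]
where $G_{\pm}$ is the matching combination of the right–hand sides of \eqref{eq-4.23}--\eqref{eq-4.24}. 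The boundary data transform into $w_{\pm}(0)=0$ (from $u(0)=y(0)=0$) and $w_{+}(1)=w_{-}(1)=u(1)$ (from $y(1)=0$), while $y_x(1)=\frac{1}{2i}\big((w_+)_x(1)-(w_-)_x(1)\big)$ and $u_x(1)=\frac12\big((w_+)_x(1)+(w_-)_x(1)\big)$. Since $\Re(\la)$ is bounded and $\la\to+\infty$, the frequencies are real with $\mu_{\pm}=\la\mp\frac b2+O(\la^{-1})$.

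First I would solve each scalar equation under $w_{\pm}(0)=0$, writing $w_{\pm}(x)=A_{\pm}\sin(\mu_{\pm}x)+w_{\pm}^{p}(x)$ with $w_{\pm}^{p}$ the variation–of–parameters solution vanishing together with its derivative at $x=0$. Imposing $w_{\pm}(1)=u(1)$ fixes $A_{\pm}$ and yields, whenever $\sin(\mu_{\pm})\neq0$,
\[
(w_{\pm})_x(1)=\mu_{\pm}\cot(\mu_{\pm})\,u(1)+R_{\pm},
\]
where $R_{\pm}$ gathers the contributions of $G_{\pm}$. The delicate point is that $G_{\pm}$ carries a term $i\la f_1/\la^{\ell}$ with an extra factor $\la$; exactly as in the proof of Lemma \ref{Theorem-4.3} this must be integrated by parts so that it is paired with the small quantity $u$ (controlled by \eqref{eq-4.10}) rather than with $u_x$, after which $R_{\pm}$ is of strictly lower order than the leading term.

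The two regimes are then governed by the position of $\mu_{\pm}$ relative to $\pi\Z$, and since $\mu_+-\mu_-=-b+O(\la^{-1})$ the frequencies approach $\pi\Z$ simultaneously precisely when $b$ lies near $\pi\Z$. If $b\notin\pi\Z$, then for $\la$ large at most one of $\sin(\mu_+),\sin(\mu_-)$ is small; feeding in $|u(1)|=o(1)\la^{-(\ell+\alpha+1)/2}$ and $|u_x(1)|=o(1)\la^{-\ell/2}$ from Lemma \ref{Theorem-4.2}, the \emph{sum} relation $2u_x(1)=\big(\mu_+\cot\mu_++\mu_-\cot\mu_-\big)u(1)+R_++R_-$ lets me trade the single large cotangent against the small $u_x(1)$, so that the \emph{difference} producing $y_x(1)$ obeys $|y_x(1)|\le C\big(|u_x(1)|+\la|u(1)|\big)+|R_{\pm}|$. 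As $\la|u(1)|=o(1)\la^{-(\ell+\alpha-1)/2}$ dominates, squaring gives $|y_x(1)|^{2}=o(1)\la^{-(\ell-1+\alpha)}$, i.e. $\delta(\alpha)=\ell-1+\alpha$. If $b\in\pi\Z$, both cotangents may blow up together; because $\cot$ is $\pi$–periodic and $\mu_+-\mu_-\equiv-b$ modulo $\pi$ up to $O(\la^{-1})$, the two singular contributions nearly cancel, and the surviving remainder must be extracted by expanding $\mu_+-\mu_-$ to second order and using $\cot(\mu_-)=\cot(\mu_+)+O\big(\la^{-1}\csc^{2}\mu_+\big)$. The $\csc^{2}\mu_+$ amplification, already visible in Cases 2 and 3 of Proposition \ref{Theorem-3.2}, costs four powers of $\la$ and produces the weaker exponent $\delta(\alpha)=\ell-5+\alpha$.

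The main obstacle is precisely this resonant case $b\in\pi\Z$: one has to carry the expansions of $\mu_{\pm}$ far enough inside the trigonometric arguments to expose the cancellation between the two nearly coincident singular terms, while keeping the $\csc\mu_{\pm}$ factors under uniform control for all large real $\la$. Away from the resonance the estimate is comparatively soft, following directly from the separation of $\mu_+$ and $\mu_-$ modulo $\pi$ together with the boundary bounds of Lemma \ref{Theorem-4.2}.
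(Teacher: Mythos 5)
Your reduction to $w_{\pm}=u\pm iy$ with frequencies $\mu_{\pm}=\sqrt{\la^{2}\mp b\la}=\la\mp\frac{b}{2}-\frac{b^{2}}{8\la}+O(\la^{-2})$ is correct and is a genuinely different route from the paper, which never decouples the system: the paper works with $Y=(u,u_x,y,y_x)$, expands the matrix exponentials $e^{\pm B}$, and extracts from the vanishing components of $Y(0)$ and $Y(1)$ two \emph{linear} relations in the quantities $\sin(\la)y_x(1)$ and $\cos(\la)y_x(1)$, which it closes with $\sin^{2}+\cos^{2}=1$; no trigonometric quantity is ever divided by. Your formulation instead divides by $\sin\mu_{\pm}$ (the cotangents), and this is where care is needed, because in this lemma $\la$ runs over an \emph{arbitrary real} sequence tending to infinity (it is not an eigenvalue and not confined to a strip), so $\sin\mu_{+}$, say, can vanish exactly (take $\la$ with $\sqrt{\la^{2}-b\la}\in\pi\Z$). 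In Case 1 ($b\notin\pi\Z$) your argument survives this, essentially as you describe: since $\mu_{+}-\mu_{-}\to-b\notin\pi\Z$, at least one of $|\sin\mu_{\pm}|$ stays bounded below; relabelling so that it is $\mu_{-}$ and eliminating $(w_{+})_x(1)$ through the sum identity $(w_{+})_x(1)+(w_{-})_x(1)=2u_x(1)$ leaves only the controlled cotangent, giving $|y_x(1)|\leq C\left(|u_x(1)|+\la|u(1)|+|R_{-}|\right)=o(\la^{-(\ell+\alpha-1)/2})$ by Lemma \ref{Theorem-4.2}; together with the integration by parts on the $i\la f_1/\la^{\ell}$ term that you correctly flag, Case 1 is sound.

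Case 2 ($b\in\pi\Z$) contains a genuine gap. First, the cancellation you invoke does not occur: writing $\cot\mu_{-}=\cot\mu_{+}+\Delta$, one has $\mu_{+}\cot\mu_{+}-\mu_{-}\cot\mu_{-}=(\mu_{+}-\mu_{-})\cot\mu_{+}-\mu_{-}\Delta$, and since $\mu_{+}-\mu_{-}\to-b\neq0$ the difference retains the fully singular term $-b\cot\mu_{+}$; periodicity makes the two \emph{cotangents} nearly equal, not their $\mu$-weighted combination, so nothing cancels at leading order. Second, and more fundamentally, your accounting ``$\csc^{2}\mu_{+}$ costs four powers of $\la$'' silently assumes a bound of the form $|\sin\mu_{+}|\geq c\la^{-2}$, which is never proved and is false pointwise (again, $\sin\mu_{+}$ may vanish exactly); the appeal to Proposition \ref{Theorem-3.2} cannot supply it, since that proposition describes eigenvalues of $\AA$, not arbitrary real $\la$. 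What is true, and is the missing lemma, is a \emph{separation} statement: for $b\in\pi\Z$ one computes $\mu_{+}-\mu_{-}+b=-\frac{b^{3}}{8\la^{2}}+O(\la^{-4})$, hence $\mathrm{dist}(\mu_{+}-\mu_{-},\pi\Z)\sim\frac{|b|^{3}}{8\la^{2}}$, and therefore $\max\left(|\sin\mu_{+}|,|\sin\mu_{-}|\right)\geq c\la^{-2}$: the two frequencies cannot be near-resonant simultaneously, though either one individually may be resonant. Granting this, the proof must be restructured exactly as in your Case 1 rather than by comparing the two cotangents: relabel so that $|\sin\mu_{-}|\geq c\la^{-2}$, eliminate $(w_{+})_x(1)$ via the sum identity, and then $|\mu_{-}\cot(\mu_{-})u(1)|\leq C\la^{3}|u(1)|=o(\la^{-(\ell+\alpha-5)/2})$ by Lemma \ref{Theorem-4.2}; this is where the four powers are actually lost, and it yields $|y_x(1)|^{2}=o(\la^{-(\ell-5+\alpha)})$ as claimed. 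So your exponents are right and the plan is completable, but as written the resonant case rests on a cancellation that fails and on a lower bound for $|\sin\mu_{+}|$ that you neither state nor establish.
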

\begin{proof}
Let $Y=(u,u_x,y,y_x) $, then system \eqref{eq-4.23}-\eqref{eq-4.24} could be written as 
\begin{equation}\label{eq-4.28}
Y_x=BY+G+\la F,
\end{equation}
where
\begin{equation}\label{eq-4.29}
B=\begin{pmatrix}
	0&1&0&0\\
	-\la^2&0&i\la b&0\\
	0&0&0&1\\
	-i\la b&0&-\la^2&0
	\end{pmatrix} ,\ \ G=\left(G_j\right)=\begin{pmatrix}
	0\\ \displaystyle
	\frac{-g_1-ib f_2}{\la^{\ell}}\\ 0\\ \displaystyle
	\frac{-g_2+ib f_1}{\la^{\ell}} 
	\end{pmatrix}\quad \text{and}\ \ F=\left(F_J\right)=\begin{pmatrix}
	0\\ \displaystyle
	-\frac{if_1}{\la^{\ell}}\\ 0\\ \displaystyle
	-\frac{if_2}{\la^{\ell}}
	\end{pmatrix}.
\end{equation}
Using Ordinary Differential Equation Theory, the solution of equation \eqref{eq-4.28} is given by 
\begin{equation*}
Y(x)=e^{Bx}Y_0+\int_0^xe^{B(x-z)}G(z)dz+\la\int_0^xe^{B(x-z)}F(z)dz.
\end{equation*}
Then, we have
$$
Y(1)=e^{B}Y_0+\int_0^1e^{B(1-z)}G(z)dz+\la\int_0^xe^{B(1-z)}F(z)dz.
$$
Equivalently, we get 
\begin{equation*}
e^{-B}Y(1)=Y_0+e^{-B}\int_0^1e^{B(1-z)}G(z)dz+\la e^{-B}\int_0^1e^{B(1-z)}F(z)dz.
\end{equation*}
Using Lemma \ref{Theorem-4.2}, we have 
$$
Y(1)=\left(\frac{o(1)}{\la^{\frac{\ell+\alpha+1}{2}}},\frac{o(1)}{\la^{\frac{\ell}{2}}},0,y_x(1)\right).
$$
Performing advanced calculation for the exponential of matrix $B$ and $-B$, we obtain the following matrix 
		$$e^B=\begin{pmatrix}
				A_1&\frac{1}{\la}A_2-\frac{b}{2\la^2}\left(A_4+\frac{b}{4}A_1\right)&iA_3&\frac{-i}{\la}A_4-\frac{ib}{2\la^2}\left(\frac{b}{4}A_3-A_2\right)\\
				\frac{b}{2}\left(\frac{b}{4}A_1-A_4\right)&A_1&\frac{ib}{2}\left(A_2+\frac{b}{4}A_3\right)&iA_3\\
				-iA_3&\frac{i}{\la}A_4+\frac{ib}{2\la^2}\left(\frac{b}{4}A_3-A_2\right)&A_1&\frac{1}{\la}A_2-\frac{b}{2\la^2}\left(A_4+\frac{b}{4}A_1\right)\\
				-\frac{ib}{2}\left(A_2+\frac{b}{4}A_3\right)&-iA_3&\frac{b}{2}\left(\frac{b}{4}A_1-A_4\right)&A_1
				\end{pmatrix}$$
			$$
			+\begin{pmatrix}
			0&0&0&0\\
			-\la A_2&0&i\la A_4&0\\
			0&0&0&0\\
			i\la A_4&0&-\la A_2&0
			\end{pmatrix}+\begin{pmatrix}
			o(1)&O\left(\frac{1}{\la^3}\right)&o(1)&O\left(\frac{1}{\la^3}\right)\\
			o(1)&o(1)&o(1)&o(1)\\
			o(1)&O\left(\frac{1}{\la^3}\right)&o(1)&O\left(\frac{1}{\la^3}\right)\\
			o(1)&o(1)&o(1)&o(1)
			\end{pmatrix},
			$$
			and the expression of $e^{-B}$ is given by 
			$$e^{-B}=\begin{pmatrix}
					A_1&-\frac{1}{\la}A_2+\frac{b}{2\la^2}\left(A_4+\frac{b}{4}A_1\right)&iA_3&\frac{i}{\la}A_4+\frac{ib}{2\la^2}\left(\frac{b}{4}A_3+A_2\right)\\
					-\frac{b}{2}\left(\frac{b}{4}A_1-A_4\right)&A_1&-\frac{ib}{2}\left(A_2+\frac{b}{4}A_3\right)&iA_3\\
					iA_3&\frac{i}{\la}A_4+\frac{ib}{2\la^2}\left(\frac{b}{4}A_3-A_2\right)&A_1&-\frac{1}{\la}A_2+\frac{b}{2\la^2}\left(A_4+\frac{b}{4}A_1\right)\\
					-\frac{ib}{2}\left(A_2+\frac{b}{4}A_3\right)&iA_3&-\frac{b}{2}\left(\frac{b}{4}A_1-A_4\right)&A_1
					\end{pmatrix}$$
				$$
				+\begin{pmatrix}
				0&0&0&0\\
				\la A_2&0&-i\la A_4&0\\
				0&0&0&0\\
				i\la A_4&0&+\la A_2&0
				\end{pmatrix}+\begin{pmatrix}
				o(1)&O\left(\frac{1}{\la^3}\right)&o(1)&O\left(\frac{1}{\la^3}\right)\\
				o(1)&o(1)&o(1)&o(1)\\
				o(1)&O\left(\frac{1}{\la^3}\right)&o(1)&O\left(\frac{1}{\la^3}\right)\\
				o(1)&o(1)&o(1)&o(1)
				\end{pmatrix},
				$$
where 
$$
A_1=\cos(\la)\cos\left(\frac{b}{2}\right),\ \ A_2=\sin(\la)\cos\left(\frac{b}{2}\right),\ \ A_3=\sin(\la)\sin\left(\frac{b}{2}\right),\ \ A_4=\cos(\la)\sin\left(\frac{b}{2}\right).
$$
Since $G_1=G_3=F_1=F_3=0$ and $A_j$, $j=1,2,3,4,$, are uniformly bounded then, from $e^{-B}$ and \eqref{eq-4.29}, we get 
\begin{equation}\label{eq-4.30}
e^{-B}\int_0^1e^{B(1-z)}\left(G(z)+\la F(z)\right)dz=\left(\frac{o(1)}{\la^{\ell}},\frac{o(1)}{\la^{\ell}},\frac{o(1)}{\la^{\ell}},\frac{o(1)}{\la^{\ell}}\right).
\end{equation}
So, from equations \eqref{eq-4.4} and \eqref{eq-4.30}, we get
\begin{equation}\label{eq-4.31}
e^{-B}Y(1)=Y(0)+\left(\frac{o(1)}{\la^{\ell}},\frac{o(1)}{\la^{\ell}},\frac{o(1)}{\la^{\ell}},\frac{o(1)}{\la^{\ell}}\right).
\end{equation}
We need distinguish two cases:\\[0.1in]
\textbf{Case 1}. If $b\neq k\pi$, $k\in \Z^\star$, then 
\begin{equation}\label{eq-4.32}
\sin\left(\frac{b}{2}\right)\neq0\ \ \ \text{and}\ \ \ \cos\left(\frac{b}{2}\right)\neq0.
\end{equation}
First, using the expression of $e^{-B}$ and equation \eqref{eq-4.31}, we get 
\begin{equation}\label{eq-4.33}
\frac{1}{\la}A_4y_x(1)=\frac{o(1)}{\la^{\frac{\ell+\alpha+1}{2}}}\ \ \ \text{and}\ \ \
\frac{1}{\la}A_2y_x(1)=\frac{o(1)}{\la^{\frac{\ell+\alpha+1}{2}}}.
\end{equation}
Next, from \eqref{eq-4.32} and \eqref{eq-4.33}, we get
\begin{equation}\label{eq-4.34}
\cos\left(\lambda\right)y_x(1)=\frac{o(1)}{\la^{\frac{\ell+\alpha-1}{2}}}\ \ \ \text{and}\ \ \
\sin\left(\lambda\right)y_x(1)=\frac{o(1)}{\la^{\frac{\ell+\alpha-1}{2}}}.
\end{equation}
It follows that 
$$
|\sin\left(\lambda\right)y_x(1)|^2+|\cos\left(\lambda\right)y_x(1)|^2=|y_x(1)|^2=\frac{o(1)}{\la^{\ell+\alpha-1}}.
$$
\textbf{Case 2}. If $b=k\pi$, $k\in \Z^\star$. Assume that $b=(2s+1)\pi$  the same argument for $b=2s\pi$. Then $A_1=A_2=0$. Using the expression of $e^{-B}$ and equation \eqref{eq-4.31}, we get 
\begin{eqnarray}
-\frac{b}{2\la^2}A_4y_x(1)&=&\frac{o(1)}{\la^{\frac{\ell+\alpha+1}{2}}},\label{eq-4.35}\\
\frac{1}{\la}A_4y_x(1)+i\frac{b^2}{8\la^2}A_3y_x(1)&=&\frac{o(1)}{\la^{\frac{\ell+\alpha+1}{2}}}.\label{eq-4.36}
\end{eqnarray}
Multiplying equation \eqref{eq-4.35} by $\la^2$, we get 
\begin{equation}\label{eq-4.37}
A_4y_x(1)=\frac{o(1)}{\la^{\frac{\ell+\alpha-3}{2}}}.
\end{equation}
Combining equations \eqref{eq-4.37} and \eqref{eq-4.36}, we get 
\begin{equation}\label{eq-4.38}
A_3y_x(1)=\frac{o(1)}{\la^{\frac{\ell+\alpha-5}{2}}}.
\end{equation}
Adding the squaring of equations \eqref{eq-4.37} and \eqref{eq-4.38}, then  using the fact $A_3^2+A_4^2=\sin^2\left(\frac{b}{2}\right)=1$, we get 
$$
|y_x(1)|^2=\frac{o(1)}{\la^{\ell+\alpha-5}}.
$$
The proof is thus complete.
\end{proof}
\noindent \textbf{Proof of Theorem \ref{Theorem-4.1}.} We divide the proof in two steps. \\

\noindent {\bf Step1. The energy decay estimation.} First, taking 
$$
\ell=\left\{\begin{array}{lll}
1-\alpha&\text{if}&b\notin \pi\Z,\\
5-\alpha&\text{if}&b\in \pi\Z
\end{array}
\right.
$$
in Lemma \ref{Theorem-4.4}, we get
\begin{equation*}
\left|y_x(1)\right|^2=o(1).
\end{equation*}
\noindent Then using equation \eqref{eq-4.27}, we deduce that $y_x(0)=o(1)$ and $u_x(0)=o(1)$. Then, by taking $h=x$ in equation \eqref{eq-4.22}, we get 
\begin{equation*}
\int_0^1|\la u|^2dx+\int_0^1|u_x|^2dx+\int_0^1|\la y|^2dx+\int_0^1|y_x|^2dx=o(1).
\end{equation*}
Hence $\|U\|_{\HH}=o(1)$, which contradicts \eqref{eq-4.2}, consequently
condition \eqref{(i)} holds. This implies, from Lemma \ref{bt}, the energy decay estimation \eqref{eq-4.1}. \\

\noindent {\bf Step 2. The optimality.}  Let $\varepsilon>0$ and set 
\begin{equation*}
S=\left\{\begin{array}{ll}
\displaystyle{1-\alpha-\epsilon,\quad \text{if }b\not\in  \pi\mathbb{Z}^\star},
\\ \\
\displaystyle{5-\alpha-\epsilon,\quad \text{if }b\in  2\pi\mathbb{Z}^\star},
\\ \\
\displaystyle{5-\alpha-\epsilon,\quad \text{if }b\in  \pi(2\mathbb{Z}+1)}.
\end{array}\right.
\end{equation*}
 For $|n|\geq n_0$, let 
\begin{equation*}
\lambda_n=\left\{\begin{array}{ll}
\displaystyle{\lambda_{1,n},\quad \text{if }b\not\in  \pi\mathbb{Z}^\star},
\\ \\
\displaystyle{\lambda_{1,n},\quad \text{if }b\in  2\pi\mathbb{Z}^\star},
\\ \\
\displaystyle{\lambda_{2,n},\quad \text{if }b\in  \pi(2\mathbb{Z}+1)},
\end{array}\right.
\end{equation*}
where $\left(\lambda_{1,n}\right)_{ |n|\geq n_0} $ and $\left(\lambda_{2,n}\right)_{ |n|\geq n_0} $ are the  simple eigenvalues of $\mathcal{A}$ defined in Proposition \ref{Theorem-3.2}. Moreover, let $U_{n}\in D(\mathcal{A})$ be the    normalized eigenfunction corresponding to $\lambda_n$. So, set the real sequence $(\beta_n)_{n\geq n_0}$ by 
\begin{equation*}
\beta_n=\left\{\begin{array}{ll}
n\pi+{\dfrac {\gamma \left( 1-\cos \left( b \right)  \right)\cos
 \left( \dfrac{\pi\alpha}{2} \right)}{ 2\left( n\pi \right) ^{1-\alpha}}}, \qquad \text{if } b\not\in  \pi\mathbb{Z}^\star, 
\\ \\
n\pi+{\dfrac {{b}^{2}}{8n\pi}}+{\frac {7b^4}{128{\pi}^{
3}{n}^{3}}}+{\dfrac {\gamma\,{b}^{6} \cos \left(\dfrac{\pi\alpha}{2}  \right)}{128\,{
\pi}^{5-\alpha}{n}^{5-\alpha}}},\quad \text{if }b\in  2\pi\mathbb{Z}^\star,
\\ \\
n\pi+\frac{\pi}{2}+\frac {{b}^{2}}{8n\pi}-\frac {{b}^{2}}{16\pi {n}^{2}}+\frac{{b}^{2}\left( 4{\pi}^{2}+7{b}^{2} \right) }{{128\pi}^{3}{n}^{3}}-\frac {{b}^{2} \left( 4{\pi}^{2}+21
{b}^{2} \right) }{256{\pi}^{3}{n}^{4}} + \frac {\gamma {b}^{6} 
\cos \left(\frac{\pi\alpha}{2} \right)  }{256 {\pi}^{5-\alpha}{n}^{5-\alpha}}
,\quad \text{if }b\in  \pi(2\mathbb{Z}+1),
\end{array}\right.
\end{equation*}
Therefore, we have
$$ (i\beta_{n}+\mathcal{A})U_{n}=(i\beta_{n}+\lambda_{n})U_{n},\quad\forall |n|\geq n_0. $$
It follows, from Proposition \ref{Theorem-3.2}, that
$$ 
\beta_{n}^{S}\| (i\beta_{n}I+\mathcal{A}U_{n} \|_{\mathcal{H}}\sim  \dfrac{C}{n^{\epsilon}},\quad \forall |n|\geq n_{0},
$$
where $C>0$. Thus, we deduce 
$$ 
\lim_{|n|\to +\infty}\beta_{n}^{S}\| (i\beta_{k}I+\mathcal{A})U_{n} \|_{\mathcal{H}}=0. 
$$
Finally, thanks to Lemma \ref{bt} (Theorem 2.4 in \cite{borichev:10}), we deduce that condition \eqref{(i)} false and consequently we cannot expect the energy decay rate $t^{-\frac{2}{S}}$. Therefore estimation \eqref{eq-4.1} is optimal.  The proof is thus complete.
\subsection{Polynomial Stability in the case $\mathbf{\eta>0}$ and  $\mathbf{a\neq 1}$}\label{Section-4.2}
\noindent In this part, we study the asymptotic behavior of solution of system \eqref{eq-2.1}-\eqref{eq-2.8} in the general case when $a\neq 1$. Our main result is the following Theorem.
\begin{theoreme}\label{Theorem-4.5}
Assume that $\eta>0$, $a\neq 1$ and that condition ${\rm (SC)}$ holds.Then, for any rational number $\sqrt{a}>0$ and almost all irrational number 
$\sqrt{a}>0$, there exists a constant $c>0$ independent of $U_0$, such that the following energy estimation
\begin{equation}\label{EnergyGeneral}
E(t)\leq \frac{C}{t^{\frac{2}{5-\alpha}}}\|U_0\|^2_{D(\AA)}
\end{equation}
holds for all initial data $U_0=(u_0,u_1,y_0,y_1,\omega_0)\in D(\AA)$.
In addition, estimation \eqref{EnergyGeneral} still be holds if  $a\in \Q$, $\sqrt{a}\not\in \Q$  and $b$ small enough.
\end{theoreme}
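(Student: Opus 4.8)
The plan is to prove the resolvent estimate $\|(i\la I-\AA)^{-1}\|=O(|\la|^{5-\alpha})$ as $|\la|\to\infty$ and then invoke Lemma \ref{bt} with $\ell=5-\alpha$, which yields exactly the rate $t^{-2/\ell}=t^{-2/(5-\alpha)}$ of \eqref{EnergyGeneral}; the hypothesis $i\R\subset\rho(\AA)$ is already guaranteed by Theorem \ref{Theorem-2.4} under $(\mathrm{SC})$ with $\eta>0$. Following the scheme of Section \ref{Section-4.1}, I would argue by contradiction: suppose the estimate fails with $\ell=5-\alpha$, so there exist $\la_n\to+\infty$ and $U_n=(u^n,v^n,y^n,z^n,\omega^n)\in D(\AA)$ with $\|U_n\|_\HH=1$ and $\la_n^{\,\ell}(i\la_n I-\AA)U_n\to 0$ in $\HH$. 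Dropping the index $n$, this produces the componentwise system analogous to \eqref{eq-4.4}--\eqref{eq-4.8} (now carrying the speed $a$ in the second wave equation), and testing against $U$ gives the dissipation bound $\gamma\kappa(\alpha)\int_\R(\xi^2+\eta)|\omega|^2\,d\xi=o(1)\la^{-\ell}$ as in \eqref{eq-4.9}.

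From here I would reproduce, for $a\neq 1$, the two preliminary steps of the $a=1$ analysis. First, the analogue of Lemma \ref{Theorem-4.2}: the bounds $\|u\|,\|y\|=O(\la^{-1})$ follow from \eqref{eq-4.4} and \eqref{eq-4.6}, while the boundary law combined with the dissipation integral gives $|u_x(1)|=o(1)\la^{-\ell/2}$ and, after the same weighted Cauchy--Schwarz estimate on \eqref{eq-4.8} that produced \eqref{eq-4.20}--\eqref{eq-4.21}, the sharp trace bounds $|v(1)|=o(1)\la^{-(\ell+\alpha-1)/2}$ and $|u(1)|=o(1)\la^{-(\ell+\alpha+1)/2}$. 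Second, the analogue of the multiplier identity of Lemma \ref{Theorem-4.3}: substituting $v,z$ to obtain the second-order system in $(u,y)$, multiplying the $u$-equation by $2h\overline{u_x}$ and the now $a$-weighted $y$-equation by $2h\overline{y_x}$, integrating by parts and inserting the trace bounds yields $-\int_0^1 h'(|\la u|^2+|\la y|^2+|u_x|^2+a|y_x|^2)\,dx+a\,h(1)|y_x(1)|^2-h(0)(a|y_x(0)|^2+|u_x(0)|^2)=o(1)\la^{-(\ell-1+\alpha)}+O(\la^{-1})$, with the special case $h\equiv1$ giving $a|y_x(1)|^2-a|y_x(0)|^2-|u_x(0)|^2=o(1)\la^{-(\ell-1+\alpha)}$.

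The decisive step is the analogue of Lemma \ref{Theorem-4.4}: to show $|y_x(1)|^2=o(1)$. I would write the system as a first-order problem $Y_x=BY+G+\la F$ for $Y=(u,u_x,y,y_x)$, where now
\begin{equation*}
B=\begin{pmatrix}0&1&0&0\\ -\la^2&0&i\la b&0\\ 0&0&0&1\\ -\tfrac{i\la b}{a}&0&-\tfrac{\la^2}{a}&0\end{pmatrix},
\end{equation*}
whose four eigenvalues $\pm r_1,\pm r_2$ carry the two propagation speeds via $r_1\sim\la$ and $r_2\sim\la/\sqrt a$ as in \eqref{eq-3.52}. Propagating the trace data of Lemma \ref{Theorem-4.2}, which to leading order reads $Y(1)\approx(0,0,0,y_x(1))$, from $x=1$ to $x=0$ by $e^{-B}$ and using $u(0)=y(0)=0$, the first and third rows of $e^{-B}Y(1)=Y(0)+o(\cdot)$ yield two scalar identities $P(\la)\,y_x(1)=o(\cdot)$ and $Q(\la)\,y_x(1)=o(\cdot)$, where $P,Q$ are built from the bounded quantities $\cos\la,\sin\la,\cos(\la/\sqrt a),\sin(\la/\sqrt a)$ coming from $\cosh r_j,\sinh r_j$ on the imaginary axis. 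To conclude $|y_x(1)|^2=(|P|^2+|Q|^2)^{-1}o(\cdot)=o(1)$ I need a uniform lower bound $|P(\la_n)|^2+|Q(\la_n)|^2\geq c>0$; this is the $a\neq1$ replacement for the Pythagorean cancellation $|\sin\la|^2+|\cos\la|^2=1$ that closed Case~1 of Lemma \ref{Theorem-4.4}.

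This lower bound is the heart of the matter and the main obstacle, because for $a\neq1$ the frequencies $\la$ and $\la/\sqrt a$ no longer share a period: $P$ and $Q$ can be simultaneously small exactly when $\la_n$ is near a common near-zero of the two subsystems, a small-divisor phenomenon. When $\sqrt a\in\Q$ the two families of zeros are commensurable and periodic, so $|P|^2+|Q|^2$ is bounded below by a fixed constant and the argument closes, giving \eqref{EnergyGeneral}. When $\sqrt a\notin\Q$ the required estimate reduces to a Diophantine inequality of the form $\mathrm{dist}(m\sqrt a,\Z)\geq c\,m^{-1-\varepsilon}$ along the relevant integers $m$; since the Liouville numbers form a null set, this holds for almost every irrational $\sqrt a$, the $\varepsilon$ being harmless as it is absorbed in the $o(1)$/$O(\cdot)$ bookkeeping, which yields \eqref{EnergyGeneral} for a.e.\ $\sqrt a$. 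Finally, for $a\in\Q$, $\sqrt a\notin\Q$ with $b$ small, I would treat the coupling perturbatively: at $b=0$ the equations decouple and $P,Q$ reduce to explicitly non-vanishing single-speed expressions, so a continuity argument in $b$ preserves the lower bound for $b$ sufficiently small. In each regime one reaches $|y_x(1)|^2=o(1)$; inserting this into the $h\equiv1$ identity gives $y_x(0),u_x(0)=o(1)$, and the choice $h(x)=x$ then forces $\int_0^1(|\la u|^2+|\la y|^2+|u_x|^2+a|y_x|^2)\,dx=o(1)$, hence $\|U_n\|_\HH=o(1)$, contradicting $\|U_n\|_\HH=1$. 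Thus the resolvent estimate holds with $\ell=5-\alpha$ and Lemma \ref{bt} delivers \eqref{EnergyGeneral}.
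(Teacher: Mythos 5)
Your setup through the ODE-propagation step coincides with the paper's proof (it is exactly Lemmas \ref{Theorem-4.6}, \ref{Theorem-4.6p} and the first half of Lemma \ref{Theorem-4.7}), but the decisive claim on which your argument rests --- a uniform lower bound $|P(\la_n)|^2+|Q(\la_n)|^2\geq c>0$ --- is false for \emph{every} $a\neq 1$, and that is precisely where the whole difficulty of the theorem lives. To leading order $P\approx \sin\bigl(\la+\tfrac{b^2}{2(a-1)\la}\bigr)$ and $Q\approx\sin\bigl(\tfrac{\la}{\sqrt{a}}-\tfrac{b^2\sqrt{a}}{2(a-1)\la}\bigr)$. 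If $\sqrt{a}=p/q\in\Q$, the zeros of the leading parts coincide exactly along $\la\approx pk\pi$: choosing $\la_k$ so that the argument of $P$ equals $pk\pi$, a direct computation gives $|Q(\la_k)|=\tfrac{b^2(1+a)}{2|a-1|\sqrt{a}\,\la_k}\bigl(1+o(1)\bigr)$, hence $|P(\la_k)|^2+|Q(\la_k)|^2=O(\la_k^{-2})\to 0$; so commensurability works \emph{against} you, not for you. If $\sqrt{a}\notin\Q$, Dirichlet's theorem provides infinitely many $m$ with $\mathrm{dist}(m\sqrt{a},\Z)<1/m$, producing the same $O(\la^{-2})$ decay along a subsequence. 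This is exactly why the semigroup is not exponentially stable (Section \ref{Section-3.2}) and why the exponent is $5-\alpha$ rather than $1-\alpha$: the true lower bound is only of order $\la^{-2}$, and establishing it is the real content of the proof. Your almost-everywhere Diophantine condition $\mathrm{dist}(m\sqrt{a},\Z)\geq c\,m^{-1-\varepsilon}$ cannot repair this: the dangerous scenario is $\mathrm{dist}(m\sqrt{a},\Z)\approx C/m$ with $C=\tfrac{b^2(a+1)}{2\pi|a-1|\sqrt{a}}$, which is perfectly consistent with that lower bound, so no contradiction can follow from it.

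What the paper actually does is quantify the near-resonance rather than exclude it by a uniform bound: assuming $|y_x(1)|=1$, the two scalar identities force (with $\ell=5-\alpha$) $\la=n\pi-\tfrac{b^2}{2(a-1)\la}+o(1/\la)$ and $\la=m\pi\sqrt{a}+\tfrac{ab^2}{2(a-1)\la}+o(1/\la)$; the crucial point is that the $O(1/\la)$ coupling corrections enter the two relations with \emph{opposite} contributions, so that subtracting and squaring yields the arithmetic relation $n^2\pi^2-am^2\pi^2=b^2\tfrac{a+1}{a-1}+o(1)$ with a nonzero right-hand side. The three hypotheses of the theorem are then exactly what is needed to contradict this relation: for $\sqrt{a}=p_0/q_0\in\Q$ either $n^2=am^2$, contradicting the nonzero right-hand side, or $nq_0-p_0m$ is a nonzero integer while the relation, divided by $nq_0+p_0m\sim\la$, forces $(nq_0-p_0m)/q_0\to 0$; for $a=p_0/q_0\in\Q$ with $\sqrt{a}\notin\Q$, the integer $q_0n^2-p_0m^2$ is nonzero, giving the gap $1/q_0$, and the smallness of $b$ enters through the explicit inequality $b^2\leq\tfrac{\pi^2(a-1)}{2q_0(a+1)}$ --- not through a perturbation from $b=0$, which is backwards in principle: at $b=0$ the $y$-equation is conservative with spectrum on $i\R$, and $P,Q$ then have genuine common near-zeros, so no continuity argument can start there; finally, for almost all irrational $\sqrt{a}$ the paper invokes the metric theorem of Bugeaud (infinitely many rational approximations better than $1/(m^2\ln m)$) to exclude the forced approximation rate $C/m^2$. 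Your proposal never derives the relation $n^2\pi^2-am^2\pi^2=b^2\tfrac{a+1}{a-1}+o(1)$, and without it none of the three cases can be closed.
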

\noindent	Similar to Theorem \ref{Theorem-4.1}, we have to check condition \eqref{(i)} in Lemma \ref{Theorem-4.1} with $\ell=5-\alpha$.  By argument of contradiction, suppose that \eqref{(i)} false, then there exist a real sequence $(\la_n)$ and a sequence $U^n=(u^n,v^n,y^n,z^n,\omega^n)\in D(\mathcal{A})$, verifying the following conditions
	\begin{eqnarray}
	|\la_n|\longrightarrow+\infty,\quad \|U^n\|=\|(u^n,v^n,y^n,z^n,\omega^n)\|=1,\label{eq-4.39}\\
	\la_n^\ell(i\la_nI-\mathcal{A})U^n=(f_1^n,g_1^n,f_2^n,g_2^n,f_3^n)\longrightarrow0\quad\text{in}\quad \mathcal{H}\label{eq-4.40}.
	\end{eqnarray}
	Detailing equation \eqref{eq-4.40}, we get 
	\begin{eqnarray}
	i\la_nu^n-v^n&=&\frac{f_1^n}{\la_n^\ell}\longrightarrow 0\quad\text{in}\quad H_L^1(0,1),\label{eq-4.41}\\
	i\la_nv^n-u_{xx}^n+bz_n&=&\frac{g_1^n}{\la_n^\ell}\longrightarrow 0\quad\text{in}\quad L^2(0,1),\label{eq-4.42}\\
	i\la_ny^n-z^n&=&\frac{f_2^n}{\la_n^\ell}\longrightarrow0\quad\text{in}\quad H_0^1(0,1),\label{eq-4.43}\\
	i\la_nz^n-ay_{xx}^n-bv^n&=&\frac{g_2^n}{\la_n^\ell}\longrightarrow0\quad\text{in}\quad L^2(0,1),\label{eq-4.44}\\
(i\la_n+\xi^2+\eta)\omega^n(\xi)-v^n(1)\mu(\xi)&=&\frac{f^n_3(\xi)}{\la_n^\ell}\longrightarrow0\quad\text{in}\quad L^2(-\infty,+\infty).\label{eq-4.45}
	\end{eqnarray}
	For the simplicity, we dropped the index $n$.  Since the sequence $U_n$ is uniformly bounded in $\HH$, then using equation \eqref{eq-4.3}, we get 
\begin{equation}\label{eq-4.46}
-\gamma\kappa(\alpha)\int_{\R}(\xi^2+\eta)|\omega^n(\xi)|^2d\xi=\Re\left(\left<\left(i\la_nI-\AA\right)U^n,U^n\right>_{\HH}\right)=\frac{o(1)}{\la_n^{\ell}}.
\end{equation}
Similar to Lemma \ref{Theorem-4.2}, we can prove the following Lemma.
\begin{lemma}\label{Theorem-4.6} Assume that $\eta>0$, $a\neq 1$ and that condition ${\rm (SC)}$ holds. Then, the solution $(u,v,y,z,\omega)\in D(\AA)$ of \eqref{eq-4.4}-\eqref{eq-4.8} satisfies the following asymptotic behavior estimation
\begin{equation*}
\|u\|=\frac{O(1)}{\la},\ \|y\|=\frac{O(1)}{\la},\ |u_x(1)|=\frac{o(1)}{\la^{\frac{\ell}{2}}},\ |u(1)|=\frac{o(1)}{\la^{\frac{\ell+\alpha+1}{2}}}.
\end{equation*}
\end{lemma}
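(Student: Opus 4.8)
The plan is to observe that the four claimed estimates are completely insensitive to the value of $a$, so the proof transcribes that of Lemma \ref{Theorem-4.2} almost verbatim: each bound is extracted from equations \eqref{eq-4.41}, \eqref{eq-4.43}, \eqref{eq-4.45}, the boundary condition, and the dissipation identity \eqref{eq-4.46}, whereas the coefficient $a$ enters only through \eqref{eq-4.44}, which is never invoked. First I would record the two $L^2$ bounds: since $\|U\|_{\HH}=1$, the components $v$ and $z$ are uniformly bounded in $L^2(0,1)$, and reading off \eqref{eq-4.41} and \eqref{eq-4.43} then gives $\|u\|=O(1)/\la$ and $\|y\|=O(1)/\la$ directly.

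For $|u_x(1)|$ I would start from the boundary condition $u_x(1)+\gamma\kappa(\alpha)\int_{\R}\mu(\xi)\omega(\xi)\,d\xi=0$ and split the integral by Cauchy--Schwarz into $\bigl(\int_{\R}\mu^2/(\xi^2+\eta)\,d\xi\bigr)^{1/2}\bigl(\int_{\R}(\xi^2+\eta)|\omega|^2\,d\xi\bigr)^{1/2}$; the first factor is finite because $\mu(\xi)=|\xi|^{(2\alpha-1)/2}$ with $\alpha\in]0,1[$ and $\eta>0$, while the second is $o(1)/\la^{\ell/2}$ by \eqref{eq-4.46}, whence $|u_x(1)|=o(1)/\la^{\ell/2}$. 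For $|u(1)|$ I would solve \eqref{eq-4.45} for $\omega$, bound $|v(1)||\xi|^{\alpha-\frac12}$ pointwise, multiply by the weight $(|\la|+\xi^2+\eta)^{-2}|\xi|$, integrate over $\R$ and apply Cauchy--Schwarz exactly as in \eqref{eq-4.16}; evaluating the three constants there---$A_2\sim\la^{-3/4}$ and $A_3\sim\la^{-5/4}$ explicitly, and $A_1\sim\la^{\alpha/2-5/4}$ after the substitution $y=1+\xi^2/(|\la|+\eta)$---and combining with \eqref{eq-4.46} gives $|v(1)|=o(1)/\la^{(\ell+\alpha-1)/2}$, and one further use of \eqref{eq-4.41} produces $|u(1)|=o(1)/\la^{(\ell+\alpha+1)/2}$.

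The only step deserving genuine care is the bookkeeping of the singular integral $A_1$, whose non-integrable weight $|\xi|^{\alpha-1/2}$ must be tamed by the scaling substitution above so that the exponent $\alpha/2-5/4$ comes out correctly. I do not expect a real obstacle here, precisely because switching on $a\neq1$ alters none of the relations \eqref{eq-4.41}, \eqref{eq-4.43}, \eqref{eq-4.45} nor the boundary condition on which the whole argument rests; the modification $y_{xx}\mapsto a y_{xx}$ is confined to \eqref{eq-4.44}, which does not participate in deriving any of the four estimates.
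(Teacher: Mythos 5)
Your proposal is correct and coincides with the paper's intended argument: the paper proves this lemma simply by the remark that it is ``similar to Lemma \ref{Theorem-4.2}'', and your transcription---deriving the four estimates from \eqref{eq-4.41}, \eqref{eq-4.43}, \eqref{eq-4.45}, the boundary condition in $D(\AA)$, and the dissipation identity \eqref{eq-4.46}---is exactly that proof carried over to the case $a\neq 1$. Your explicit observation that the coefficient $a$ enters only through \eqref{eq-4.44}, which is never invoked in any of the four estimates, is precisely the justification the paper leaves implicit.
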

\noindent Similar to  Lemma \ref{Theorem-4.3}, we can prove the following Lemma.
\begin{lemma}\label{Theorem-4.6p} Let $\in W^{1,\infty}(0,1)$. Assume that $\eta>0$, $a\neq 1$ and that condition ${\rm (SC)}$ holds. Then, the solution $(u,v,y,z,\omega)\in D(\AA)$ of \eqref{eq-4.4}-\eqref{eq-4.8} satisfies the following asymptotic behavior estimation
\begin{equation}\label{eq-4.47}
-\int_0^1h'\left(|\la u|^2+|\la y|^2+|u_x|^2+a|y_x|^2\right)dx+h(1)|y_x(1)|^2-h(0)|y_x(0)|^2-h(0)|u_x(0)|^2=\frac{o(1)}{\la^{\ell-1+\alpha}}+\frac{O(1)}{\la}.
\end{equation}
\end{lemma}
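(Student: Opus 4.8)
The plan is to reproduce, almost verbatim, the multiplier computation of Lemma \ref{Theorem-4.3}, tracking the single change caused by the speed $a$. First I would eliminate $v$ and $z$: substituting \eqref{eq-4.41} and \eqref{eq-4.43} into \eqref{eq-4.42} and \eqref{eq-4.44} gives the reduced second–order system
\begin{equation*}
\la^2 u + u_{xx} - i\la b y = -\frac{g_1 + i\la f_1 + b f_2}{\la^{\ell}}, \qquad \la^2 y + a\,y_{xx} + i\la b u = -\frac{g_2 + i\la f_2 - b f_1}{\la^{\ell}},
\end{equation*}
which is exactly \eqref{eq-4.23}--\eqref{eq-4.24} except that $a$ now multiplies $y_{xx}$. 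The coefficient $a$ therefore enters only through the second equation and, after integration by parts, attaches to the $|y_x|^2$ terms, which is why $a|y_x|^2$ appears in the interior integral of \eqref{eq-4.47}.

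Next I would multiply the first equation by $2h\,\overline{u_x}$ and the second by $2h\,\overline{y_x}$, integrate over $(0,1)$ and take real parts. Using $2\Re(w\,\overline{w_x})=(|w|^2)_x$ and integrating by parts, the $u$-equation produces $-\int_0^1 h'(|\la u|^2+|u_x|^2)\,dx-h(0)|u_x(0)|^2$ together with the boundary contributions at $x=1$, which are $\la^2 h(1)|u(1)|^2$ and $h(1)|u_x(1)|^2$; by Lemma \ref{Theorem-4.6} these are $o(1)/\la^{\ell-1+\alpha}$ and $o(1)/\la^{\ell}$ respectively, hence absorbed into the right–hand side. The $y$-equation is treated identically, now carrying the factor $a$ on the $y_x$-contributions and keeping the boundary terms at $x=1$ (since $y_x(1)$ is not small) and at $x=0$. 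The forcing terms are controlled as in Lemma \ref{Theorem-4.3}: the $g_j$ terms give $o(1)/\la^{\ell}$ directly, the $i\la f_1,\,bf_2$ terms are estimated after moving the derivative off $u_x$ (resp.\ $y_x$), using $f_1\to0$ in $H_L^1(0,1)$, $f_2\to0$ in $H_0^1(0,1)$ and $\|u\|,\|y\|=O(1)/\la$; in particular I would reuse $\frac{1}{\la^{\ell}}\int_0^1\la f_2 h\,\overline{y_x}\,dx=-\frac{1}{\la^{\ell}}\int_0^1\la\bar y\big((f_2)_x h+f_2 h_x\big)\,dx=\frac{o(1)}{\la^{\ell}}$.

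The essential algebraic point, and the thing that must be rechecked despite the asymmetry introduced by $a$, is the cancellation of the two coupling terms. Integrating by parts in the $u$-identity and using $y(0)=y(1)=0$, the coupling reads $2\Re\{i\la b\int_0^1 h'y\,\bar u\,dx\}+2\Re\{i\la b\int_0^1 h\,y_x\,\bar u\,dx\}$, whose first piece is $O(1)/\la$ by the bounds of Lemma \ref{Theorem-4.6}; the $y$-identity contributes $2\Re\{i\la b\int_0^1 h\,u\,\overline{y_x}\,dx\}$. Taking the complex conjugate inside the last integral (here $\la,b$ are real) shows it equals $-2\Re\{i\la b\int_0^1 h\,y_x\,\bar u\,dx\}$, so upon adding the two identities the coupling cancels up to $O(1)/\la$. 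Crucially this cancellation does not involve $y_{xx}$ and is thus unaffected by $a$. Summing the two identities then yields \eqref{eq-4.47}.

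I expect the only genuine difficulty to be bookkeeping rather than any new idea: one must verify that every boundary term at $x=1$ and every forcing remainder is dominated by $\frac{o(1)}{\la^{\ell-1+\alpha}}+\frac{O(1)}{\la}$ (the worst contribution being $\la^2|u(1)|^2=o(1)/\la^{\ell-1+\alpha}$), and confirm that the coupling cancellation remains exact. Since $a$ multiplies only $y_{xx}$, the argument of Lemma \ref{Theorem-4.3} transfers with the sole modification that $a$ accompanies the $|y_x|^2$ terms, giving precisely \eqref{eq-4.47}.
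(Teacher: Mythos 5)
Your proof is correct and is exactly the paper's intended argument: the paper proves this lemma merely by declaring it ``similar to Lemma \ref{Theorem-4.3}'', and your computation --- elimination of $v,z$ to get the reduced system, the multipliers $2h\overline{u_x}$ and $2h\overline{y_x}$, the treatment of the $f_j$-terms by moving the derivative off $u_x$ (resp.\ $y_x$), the boundary estimates from Lemma \ref{Theorem-4.6}, and the conjugation argument showing the two coupling terms cancel up to $O(1)/\la$ --- is precisely that adaptation. One caveat: the integration by parts of $2\Re\bigl\{a\int_0^1 h\,y_{xx}\overline{y_x}\,dx\bigr\}$ attaches the factor $a$ to the boundary terms as well, so your derivation actually yields $a\,h(1)|y_x(1)|^2-a\,h(0)|y_x(0)|^2$ in place of $h(1)|y_x(1)|^2-h(0)|y_x(0)|^2$; the missing $a$ in \eqref{eq-4.47} is a typo in the paper's statement (harmless in the sequel, since $a>0$ is a fixed constant), not a defect of your argument, but strictly speaking you should not claim to recover \eqref{eq-4.47} ``precisely'' as printed.
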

\begin{lemma}\label{Theorem-4.7}
Let $\ell=5-\alpha$. Assume that $\eta>0$, $a\neq 1$ and that condition ${\rm (SC)}$ holds. Then, for any rational number $\sqrt{a}>0$ and almost all irrational number $\sqrt{a}>0$, the solution $U=(u,v,y,z,\omega)\in D(\AA)$ of system \eqref{eq-4.23}-\eqref{eq-4.24} satisfies the following estimation 
 \begin{equation}\label{eq-4.49}
   \left|y_x(1)\right|^2=o(1).
   \end{equation}
  In addition, estimation \eqref{eq-4.49} still be holds if $a\in \Q$, 
   $\sqrt{a} \not \in \Q$ and $b$ small enough.
\end{lemma}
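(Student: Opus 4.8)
The plan is to mirror the transfer-matrix argument of Lemma \ref{Theorem-4.4}, the essential new feature being that the two wave speeds $1$ and $\sqrt a$ are now distinct, so that the two frequency families of Proposition \ref{Theorem-3.4} decouple at leading order and the arithmetic of $\sqrt a$ becomes decisive. First I would rewrite the $a\neq1$ analogue of \eqref{eq-4.23}--\eqref{eq-4.24} as a first-order system $Y_x=BY+G+\la F$ with $Y=(u,u_x,y,y_x)$, where $B$ is the companion matrix whose $(4,1)$ and $(4,3)$ entries now carry the factor $a^{-1}$; since $\la$ is real, its eigenvalues are $\pm i\la$ and $\pm i\la/\sqrt a$ to leading order. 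Solving and invoking Lemma \ref{Theorem-4.6} gives $Y(0)=e^{-B}Y(1)+R$ with remainder $R=o(1)\la^{-\ell}$ componentwise, while the endpoint data read $Y(0)=(0,u_x(0),0,y_x(0))$ and $Y(1)=(o(\cdot),o(\cdot),0,y_x(1))$. At leading order $e^{-B}$ is block diagonal, the $(u,u_x)$ block carrying $\cos\la,\sin\la$ and the $(y,y_x)$ block carrying $\cos(\la/\sqrt a),\sin(\la/\sqrt a)$, the coupling $b$ entering only through off-diagonal blocks of lower order; I would expand $e^{-B}$ to the order $\la^{-(5-\alpha)}$ dictated by the damping coefficient $(\la+\eta)^{\alpha-1}$, exactly as in Lemma \ref{Theorem-4.4}.

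The first two scalar rows, read off using $u(0)=0$ and the smallness of $u(1),u_x(1)$, give $\sin\la\,u_x(0)=\text{small}$ and $\cos\la\,u_x(0)=\text{small}$; squaring and adding yields $u_x(0)=o(1)$. The multiplier identity \eqref{eq-4.47} with $h\equiv1$ then reduces to $|y_x(1)|^2-|y_x(0)|^2-|u_x(0)|^2=o(1)$, so once $u_x(0)=o(1)$ everything reduces to estimating $y_x(0)$. The third and fourth rows give $\sin(\la/\sqrt a)\,y_x(0)=\text{small}$ and $y_x(1)=\cos(\la/\sqrt a)\,y_x(0)+\cdots$, which close immediately away from the zeros of $\sin(\la/\sqrt a)$. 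The one genuinely degenerate direction is the first frequency family $\la\approx n\pi\sqrt a$ (Proposition \ref{Theorem-3.4}), where $\sin(\la/\sqrt a)\approx0$, the $(y,y_x)$ block loses rank, and one must descend to the damping contribution; there the surviving non-degenerate factor is $\sin\la=\sin(n\pi\sqrt a)$, whose modulus is comparable to $\mathrm{dist}(n\sqrt a,\Z)$. The second family $\la\approx(n+\tfrac12)\pi$ is handled symmetrically, the relevant factor being $\mathrm{dist}\big((n+\tfrac12)/\sqrt a,\Z\big)$.

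The main obstacle is precisely this quantitative non-resonance step: closing the estimate with $\ell=5-\alpha$ requires a lower bound $\mathrm{dist}(n\sqrt a,\Z)\geq C\,n^{-(1+\varepsilon)}$ along the degenerate family. When $\sqrt a=p/q\in\Q$ this distance is bounded below by a fixed constant on the subsequence of indices $n$ with $q\nmid n$, condition ${\rm (SC)}$ excluding the exact coincidences, which yields \eqref{eq-4.49}. For irrational $\sqrt a$ the bound is the Diophantine property enjoyed by Lebesgue-almost every real number, the exceptional Liouville-type set being null; this is exactly the source of the restriction to \emph{almost all} irrational $\sqrt a$. Because the damping carries the large power $\la^{-(5-\alpha)}$, a merely polynomial-in-$n$ lower bound leaves enough slack to force $y_x(0)=o(1)$, hence $|y_x(1)|^2=o(1)$.

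Finally, for the remaining regime $a\in\Q$, $\sqrt a\notin\Q$ (a quadratic irrational) I would run a perturbation argument in the small parameter $b$: for $b$ small the characteristic roots \eqref{eq-3.52}, and with them the degenerate family, stay in a controlled neighbourhood of the uncoupled spectrum, which restores the required separation and again gives \eqref{eq-4.49}. The simultaneous control of $\sin\la$ and $\sin(\la/\sqrt a)$ along the spectrum is the crux of the whole proof, and it is there that the distinction between rational, almost-all-irrational, and small-$b$ hypotheses on $\sqrt a$ enters.
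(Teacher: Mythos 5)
Your setup coincides with the paper's: the first-order system $Y_x=BY+F$, the expansion of $e^{\pm B}$ into a $(u,u_x)$-block carrying $\cos\la,\sin\la$ and a $(y,y_x)$-block carrying $\cos(\la/\sqrt a),\sin(\la/\sqrt a)$, the endpoint data from Lemma \ref{Theorem-4.6}, and the observation that everything closes when $\sin(\la/\sqrt a)$ stays away from zero (your $u$-block step giving $u_x(0)=o(1)$ is also essentially correct). The genuine gap is in the degenerate regime $\sin(\la/\sqrt a)\approx 0$, which is the whole content of the lemma. There, the first row of the transfer relation only yields $\left|\sin\la\right|\,\left|y_x(1)\right|=O(1/\la)$: the entry of $e^{\pm B}$ multiplying $y_x(1)$ in that row is $\frac{ab}{(a-1)\la^{2}}\bigl(\sin\la+\sqrt a\,\sin(\la/\sqrt a)\bigr)+O(\la^{-3})$ (the paper's $o_{14}$), while the best smallness of the data is $|u(1)|=o(1)\la^{-(\ell+\alpha+1)/2}=o(\la^{-3})$; dividing by $b/\la^{2}$ leaves resolution exactly $o(1/\la)$. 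The exponent $\ell=5-\alpha$ is \emph{entirely consumed} in reaching this resolution, so there is no ``slack'', contrary to your final claim. Consequently your mechanism would need $\mathrm{dist}(n\sqrt a,\Z)\gg 1/n$ along the degenerate family, not merely $\geq C n^{-(1+\varepsilon)}$ (your bound only gives $|y_x(1)|=O(n^{\varepsilon})$); and a bound $\gg 1/n$ is impossible for \emph{every} irrational $\sqrt a$, since Dirichlet's theorem produces infinitely many $n$ with $\mathrm{dist}(n\sqrt a,\Z)<1/n$. Your rational case fails for the same reason: when $\sqrt a=p/q$ and $q\mid n$ one has $\sin(n\pi\sqrt a)=0$ exactly, and condition $({\rm SC})$ does not exclude these frequencies — it rules out exact imaginary eigenvalues, not approximate resonances.

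The idea you are missing is the paper's second-order expansion of $e^{\pm B}$: keeping the $b^{2}/\la$ corrections (the cosine terms inside $o_{14}$ and $o_{34}$) converts the two relevant rows into the \emph{shifted} resonance conditions \eqref{eq-4.60}--\eqref{eq-4.61}, i.e. $\la=n\pi-\frac{b^2}{2(a-1)\la}+o(1/\la)$ and $\la=m\pi\sqrt a+\frac{ab^2}{2(a-1)\la}+o(1/\la^{3})$ must hold simultaneously. The contradiction never comes from the impossibility of $\sin\la$ and $\sin(\la/\sqrt a)$ being small together — that happens for infinitely many $\la$ whatever $\sqrt a$ is — but from the fact that the two shifts differ by the nonzero amount $\frac{(a+1)b^2}{2(a-1)\la}$, which yields the key identity $n^2\pi^2-am^2\pi^2=b^2\frac{a+1}{a-1}+o(1)$, equation \eqref{eq-4.66}. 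All three cases of the lemma are contradictions to this single relation: for $\sqrt a\in\Q$ the left-hand side is either $0$ or of size at least $cm\to\infty$, never a fixed nonzero constant; for $a=p_0/q_0\in\Q$ with $\sqrt a\notin\Q$ it lies in $\frac{\pi^2}{q_0}\Z\setminus\{0\}$, which is incompatible with \eqref{eq-4.66} once $b^2\leq\pi^2(a-1)/(2q_0(a+1))$ — so ``$b$ small'' enters as an arithmetic threshold, not as the spectral-perturbation argument you sketch; and the almost-every-$\sqrt a$ case is obtained by feeding the resulting approximation rate \eqref{eq-4.71} into a metric Diophantine theorem from Bugeaud's book. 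Since your leading-order expansion never produces the constant $b^2\frac{a+1}{a-1}$, none of these three conclusions is reachable along your route.
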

\begin{proof}
By taking $h=x$ in \eqref{eq-4.47} we deduce that $y_x(1)$ is uniformly bounded. We will show that $\left|y_x(1)\right|^2=o(1)$ by contradiction argument. So, assume that $\left|y_x(1)\right|^2=1$. Eliminate $v$ and $z$ in equations \eqref{eq-4.41}-\eqref{eq-4.43} by \eqref{eq-4.42} and \eqref{eq-4.44}, we  obtain the reduced system
	\begin{eqnarray}
	\la^2u+u_{xx}-i\la by&=&-\dfrac{g_1+i\la f_1+bf_2}{\la^{\ell}},\label{eq-4.50}\\
	\la^2y+ay_{xx}+i\la bu&=&-\dfrac{g_2+i\la f_2-bf_1}{\la^{\ell}}\label{eq-4.51}.
	\end{eqnarray}
	Let $Y=(u,u_x,y,y_x)$, then system \eqref{eq-4.50} and \eqref{eq-4.51}, could be written as 
	\begin{equation}\label{eq-4.52}
	Y_x=BY+F,
	\end{equation}
	where 
	$$
	B=\begin{pmatrix}
	0&1&0&0\\
	-\la^2&0&i\la b&0\\
	0&0&0&1\\
	\frac{-i\la b}{a}&0&\frac{-\la^2}{a}&0
	\end{pmatrix},\quad F=\begin{pmatrix}
	0\\-\frac{g_1+i\la f_1+f_2}{\la^{\ell}}\\0 \\ -\frac{g_2+i\la g_1-bf_1}{\la^{\ell}}
	\end{pmatrix}\quad\text{and}\quad Y_0=\begin{pmatrix}
	0\\ u_x(0)\\ 0\\ y_x(0)
	\end{pmatrix}.
	$$
	The solution of equation \eqref{eq-4.52} at 1 is given by 
	\begin{equation}\label{eq-4.53}
	Y(1)=e^{B}Y_0+\int_0^1e^{B(x-z)}F(z)dz.
	\end{equation}
	Using Lemma \ref{Theorem-4.6}, we have 
\begin{equation}\label{eq-4.54}
Y(1)=\left(\frac{o(1)}{\la^{\frac{\ell+\alpha+1}{2}}},\frac{o(1)}{\la^{\frac{\ell}{2}}},0,1\right).
\end{equation}
	From equation \eqref{eq-4.53}, we obtain
	\begin{equation}\label{eq-4.55}
	e^{-B}Y(1)=Y_0+e^{-B}\int_0^1e^{B(1-z)}F(z)dz,
	\end{equation}
	where
	$$
	e^B=\begin{pmatrix}
	b_{11}&b_{12}&b_{13}&b_{14}\\
	b_{21}&b_{11}&b_{23}&b_{13}\\
	-\frac{b_{13}}{a}&-\frac{b_{14}}{a}&b_{33}&b_{34}\\
	-\frac{b_{23}}{a}&-\frac{b_{13}}{a}&b_{43}&b_{33}
	\end{pmatrix}\ \ \text{and}\ \ 
	e^{-B}=\begin{pmatrix}
	b_{11}&-b_{12}&b_{13}&-b_{14}\\
	-b_{21}&b_{11}&-b_{23}&b_{13}\\
	-\frac{b_{13}}{a}&\frac{b_{14}}{a}&b_{33}&-b_{34}\\
	\frac{b_{23}}{a}&-\frac{b_{13}}{a}&-b_{43}&b_{33},
	\end{pmatrix}
	$$
	and
	$$
	\left\{\begin{array}{lll}
	\vspace{0.15cm}b_{11}&=&\displaystyle
	\frac{((a-1)\la+\Delta)(e^{t_1}+e^{-t_1})+((1-a)\la+\Delta)(e^{t_2}+e^{-t_2})}{4\Delta},
\\ \noalign{\medskip}
	\vspace{0.15cm}b_{12}&=&\displaystyle
	\frac{a\sqrt{2}\left(((a-1)\la-\Delta)t_1\left(e^{-t_2}-e^{t_2}\right)+((a-1)\la+\Delta)t_2\left(e^{t_1}-e^{-t_1}\right)\right)}{4t_1t_2\Delta},
\\ \noalign{\medskip}
	\vspace{0.15cm}b_{13}&=&\displaystyle
	\frac{iab}{2\Delta}\left(e^{t_2}+e^{-t_2}-e^{t1}-e^{-t_1}\right),
\\ \noalign{\medskip}
	\vspace{0.15cm}b_{14}&=&\displaystyle
	\frac{-ia^2b\sqrt{2}}{2t_1t_2\Delta}\left(t_2\left(e^{t_1}-e^{-t_1}\right)-t_1\left(e^{t_2}-e^{-t_2}\right)\right),
\\ \noalign{\medskip}
	\vspace{0.15cm}b_{21}&=&\displaystyle
	\frac{-a\la\sqrt{2}\left(\left((a-1)\la^2+\la\Delta+2b^2\right)t_2\left(e^{t_1}-e^{-t_1}\right)+\left((a-1)\la^2+\la\Delta-2b^2\right)t_1\left(e^{t_2}-e^{-t_2}\right)\right)}{4t_1t_2\Delta},
\\ \noalign{\medskip}
	\vspace{0.15cm}b_{23}&=&\displaystyle
	\frac{iab\la\sqrt{2}\left(\left((a+1)\la+\Delta\right)t_2\left(e^{t_1}-e^{-t_1}\right)\right)+\left(\left(\Delta-(a+1)\la\right)t_1\left(e^{t_2}-e^{-t_2}\right)\right)}{4t_1t_2\Delta},
\\ \noalign{\medskip}
	\vspace{0.15cm}b_{33}&=&\displaystyle
	\frac{\left((1-a)\la+\Delta\right)\left(e^{t_1}+e^{-t_1}\right)+\left((a-1)\la+\Delta\right)\left(e^{t_2}+e^{-t_2}\right)}{4\Delta},
\\ \noalign{\medskip}
	\vspace{0.15cm}b_{34}&=&\displaystyle
	\frac{a\sqrt{2}\left(\left((a-1)\la+\Delta\right)t_1\left(e^{t_2}-e^{-t_2}\right)+\left((1-a)\la+\Delta\right)t_2\left(e^{t_1}-e^{-t_1}\right)\right)}{4t_1t_2\Delta},
\\ \noalign{\medskip}
	\vspace{0.15cm}b_{43}&=&\displaystyle
	\frac{\la\sqrt{2}\left(\left((1-a)\la^2-\la\Delta+2ab^2\right)t_1\left(e^{t_2}-e^{-t_2}\right)+\left((a-1)\la^2-\la\Delta-2ab^2\right)t_2\left(e^{t_1}-e^{-t_1}\right)\right)}{4t_1t_2\Delta},
	\end{array}
	\right.
	$$
	such that
	$$
	t_1=\frac{\sqrt{-2a\la\left((a+1)\la+\Delta\right)}}{2},\ \ t_2=\frac{\sqrt{2a\la\left(\Delta-(a+1)\la\right)}}{2}\ \ \text{and}\ \ \Delta=\sqrt{(a-1)^2\la^2+4ab^2}.
	$$
	Performing advanced calculation for the exponential of matrix $B$ and $-B$, we obtain the following matrix
	$$
	e^{B}=\begin{pmatrix}
	\cos(\la)&0&0&0\\
	-\la\sin(\la)-\frac{b^2}{2(a-1)}\cos(\la)&\cos(\la)&\frac{ib}{(a-1)}\left(a\sin(\la)-\sqrt{a}\sin\left(\frac{\la}{\sqrt{a}}\right)\right)&0\\
	0&0&\cos\left(\frac{\la}{\sqrt{a}}\right)&0\\
	\frac{-ib}{a(a-1)}\left(a\sin(\la)-\sqrt{a}\sin\left(\frac{\la}{\sqrt{a}}\right)\right)&0&-\frac{\la}{\sqrt{a}}\sin\left(\frac{\la}{\sqrt{a}}\right)+\frac{b^2}{2(a-1)}\cos\left(\frac{\la}{\sqrt{a}}\right)&\cos\left(\frac{\la}{\sqrt{a}}\right)
	\end{pmatrix}
	+(o_{ij}),
	$$
	where $o_{ij}=\frac{O(1)}{\la}$. In particular, we have
	$$
	o_{14}=\frac{iab}{(a-1)\la^2}\left(\sin(\la)+\sqrt{a}\sin\left(\frac{\la}{\sqrt{a}}\right)\right)+\frac{iab^3}{2(a-1)^2\la^3}\left(\cos(\la)-a\cos\left(\frac{\la}{\sqrt{a}}\right)\right)+\frac{O(1)}{\la^4},
	$$
	$$
	o_{31}=-\frac{ib}{(a-1)\la}\left(\cos\left(\frac{\la}{\sqrt{a}}\right)-\cos(\la)\right)+\frac{O(1)}{\la^2},
	$$
	$$
	o_{32}=-\frac{ib}{(a-1)\la^2}\left(\sin(\la)+\sqrt{a}\sin\left(\frac{\la}{\sqrt{a}}\right)\right)+\frac{O(1)}{\la^3}
	$$
	and
	$$
	o_{34}=\frac{\sqrt{a}}{\la}\sin\left(\frac{\la}{\sqrt{a}}\right)-\frac{ab^2}{2(a-1)\la^2}\cos\left(\frac{\la}{\sqrt{a}}\right)+\frac{O(1)}{\la^3}.
	$$	
	  Using the expression of $e^{B}$, $e^{-B}$, $F$ and Lemma \eqref{eq-4.54},  we get
    \begin{equation}\label{eq-4.56}
    e^{-B}\int_0^1e^{B(1-z)}F(z)dz=\left(\dfrac{o(1)}{\la^{\ell}},\dfrac{o(1)}{\la^{\ell}},\dfrac{o(1)}{\la^{\ell}},\dfrac{o(1)}{\la^{\ell}}\right) 
    \end{equation}
    and 
    \begin{equation}\label{eq-4.57}
    e^{-B}Y(1)=\left(o_{14}y_x(1)+\dfrac{o(1)}{\la^{\frac{\ell+\alpha+1}{2}}},\frac{O(1)}{\la}y_x(1)+\dfrac{o(1)}{\la^{\frac{\ell+\alpha-1}{2}}},o_{34}y_x(1)+\dfrac{o(1)}{\la^{\frac{\ell+\alpha+3}{2}}},O(1)y_x(1)+\dfrac{o(1)}{\la^{\frac{\ell+\alpha+1}{2}}}\right) .
    \end{equation}
Our aim is to show that $| y_x(1)| = o(1),$ suppose that $| y_x(1)| =1.$   Inserting equations \eqref{eq-4.56} and \eqref{eq-4.57} in \eqref{eq-4.55} and using the expression of $o_{14}$ and $o_{34}$,  we get 
    \begin{equation}\label{eq-4.58}
    \begin{split}
    \frac{iab}{(a-1)\la^2}\left(\sin(\la)+\sqrt{a}\sin\left(\frac{\la}{\sqrt{a}}\right)\right)+\frac{iab^3}{2(a-1)^2\la}\left(\cos(\la)-a\cos\left(\frac{\la}{\sqrt{a}}\right)\right)\\
    +\frac{O(1)}{\la^4}+\frac{o(1)}{\la^{\frac{\ell+\alpha+1}{2}}}+\frac{o(1)}{\la^{\ell}}=0
    \end{split}
    \end{equation}
    and 
    \begin{equation}\label{eq-4.59}
    -\frac{\sqrt{a}}{\la}\sin\left(\frac{\la}{\sqrt{a}}\right)+\frac{ab^2}{2(a-1)\la^2}\cos\left(\frac{\la}{\sqrt{a}}\right)+\frac{O(1)}{\la^3}+\frac{o(1)}{\la^{\frac{\ell+\alpha+3}{2}}}+\frac{o(1)}{\la^{\ell}}=0.
    \end{equation}
    Multiplying equations \eqref{eq-4.58} and \eqref{eq-4.59} by $\la^2$ and $-\frac{\la}{\sqrt{a}}$ respectively, we get 
    	\begin{equation}
    	\sin\left(\la+\frac{b^2}{2(a-1)\la}\right)=\frac{O(1)}{\la^2}+\frac{o(1)}{\la^{\frac{\ell+\alpha-3}{2}}}+\frac{o(1)}{\la^{\ell-2}}\label{eq-4.60}
    	\end{equation}
    	and
    	\begin{equation} 	
    	\sin\left(\frac{\la}{\sqrt{a}}-\frac{b^2\sqrt{a}}{2(a-1)\la}\right)=\frac{O(1)}{\la^2}+\frac{o(1)}{\la^{\frac{\ell+\alpha+1}{2}}}+\frac{o(1)}{\la^{\ell-1}}.\label{eq-4.61}
   	\end{equation}
Since $\ell=5-\alpha$, it follows from equations \eqref{eq-4.60} and \eqref{eq-4.61}, there exists $n,m\in \mathbb{Z}$ such that 
    	\begin{eqnarray}
  \lambda=n\pi  -\frac{b^2}{2\left(a-1\right)\lambda}  +o\left(\frac{1}{\lambda }\right),\label{eq-4.62}
     \\ \noalign{\medskip}
 \lambda    =m\pi\sqrt{a}+\frac{a b^2}{2\left(a-1\right)\lambda}+o\left(\frac{1}{\lambda^3 }\right).\label{eq-4.63}
    	\end{eqnarray}
    	Using the fact that $\la$ is big enough; i.e., $\la \sim \pi n\sim \pi\sqrt{a}m$, then by tacking the squares of equations \eqref{eq-4.62} and \eqref{eq-4.63}, we get respectively
    	\begin{eqnarray}
    	\la^2&=&n^2\pi^2-\frac{b^2}{a-1}+o(1),\label{eq-4.64}\\
    	\la^2&=&am^2\pi^2+\frac{ab^2}{a-1}+o\left(\frac{1}{\la^2}\right).\label{eq-4.65}
    	\end{eqnarray}
    	Combining equations \eqref{eq-4.64}-\eqref{eq-4.65}, we get 
    	\begin{equation}\label{eq-4.66}
    	n^2\pi^2-am^2\pi^2=b^2\left(\frac{a+1}{a-1}\right)+o(1).
    	\end{equation}
We distinguish three cases: \\[0.1in]    	
\textbf{Case 1.} Assume that $\sqrt{a}\in\mathbb{Q}.$ We have 
\begin{enumerate}
 \item If  $a=\frac{p_0^2}{q_0^2}=\frac{n^2}{m^2}$ where $p_0,q_0\in \Z$, then from equation \eqref{eq-4.66}, we get the following contradiction
    	$$
    	0=b^2\left(\frac{a+1}{a-1}\right)+o(1).
    	$$ 
\item If $a=\frac{p_0^2}{q_0^2}\neq \frac{n^2}{m^2}$ where $p_0,q_0\in \Z^\star$, then from equation \eqref{eq-4.66}, we get 
    	$$
    	n^2-\frac{p_0^2}{q_0^2}m^2=\frac{b^2}{\pi^2}\left(\frac{a+1}{a-1}\right)+o(1).
    	$$
    	Equivalently, we have
    	\begin{equation*}
    	\frac{nq_0-p_0m}{q_0}=\frac{b^2}{\pi^2}\left(\frac{a+1}{a-1}\right)\frac{q_0}{nq_0+p_0m}+\frac{o(1)}{\la}.
    	\end{equation*}
    	Then, we get the following contradiction 
    	$$
    	\frac{1}{q_0}\leq \frac{O(1)}{\la}+o(1).$$
 \end{enumerate}
 Consequently,   we get $|y_x(1)|=o(1)$ in the case $\sqrt{a}\in \mathbb{Q}$.\\[0.1in]
{\textbf{Case 2}}. Assume that $b$ is small enough, there exists $p_0,q_0\in \Z^\star$ such that $a=\frac{p_0}{q_0}$ and $a\neq \frac{p^2}{q^2}$ for all $p,q\in \Z^\star$. Then from equation \eqref{eq-4.66}, we have 
    	\begin{equation}\label{eq-4.67}
    	\left|\frac{q_0n^2-p_0m^2}{q_0}\right|\leq \frac{b^2}{\pi^2}\left(\frac{a+1}{a-1}\right)+o(1).
    	\end{equation}
    	Since $b$ is small enough, we can assume that 
    	\begin{equation}\label{eq-4.68}
    	b^2\leq \frac{\pi^2(a-1)}{2q_0(a+1)}.
    	\end{equation}
    	Consequently, using equations  \eqref{eq-4.67} and \eqref{eq-4.68}, we get the following contradiction
    	\begin{equation*}
    	\frac{1}{2q_0}\leq \frac{1}{q_0}-\frac{b^2(1+a)}{\pi(a-1)}\leq o(1).
    	\end{equation*}
 Therefore, we get $|y_x(1)|=o(1)$ in the case $a\in \mathbb{Q}$, $\sqrt{a}\not\in \mathbb{Q}$ and $b$ small enough.\\[0.1in]
\textbf{Case 3.} Assume that Cases 1 and 2 are not true, for almost real number $\sqrt{a}$,  subtracting    \eqref{eq-4.62} from \eqref{eq-4.63}, we get  
    \begin{equation}\label{eq-4.69}
 \frac{n}{m} -\sqrt{a}= \frac{(a+1) b^2}{2\left(a-1\right)\pi m \lambda}   +o\left(\frac{1}{\lambda }\right) . 
    \end{equation}	
  From     \eqref{eq-4.63}, we get
 \begin{equation}\label{eq-4.70}
 \frac{1}{\lambda}=\frac{1}{m\pi\sqrt{a}}+\frac{o\left(1\right)}{m^2 }.
 \end{equation}   	
 Inserting    \eqref{eq-4.70} in \eqref{eq-4.69}, we get	    \begin{equation}\label{eq-4.71}
 \frac{n}{m} -\sqrt{a}= \frac{(a+1) b^2}{2\sqrt{a}\left(a-1\right)\pi^2 m^2 }   +\frac{o\left(1\right)}{m^2 } .
    \end{equation}
From Theorem 1.10 in  \cite{Bugeaud01}, we have for almost all real numbers $\xi$  there exists infinitely many integers $n,\ m$ such that
    \begin{equation}\label{eq-4.72}
\left| \xi-\frac{n}{m}    \right|<\frac{1}{m^2 \ln|m|}. 
    \end{equation}
Let 	$\xi=\sqrt{a}$, then from \eqref{eq-4.71} and \eqref{eq-4.72} there exist infinitely many integers $n,\ m$ such that
 \begin{equation}\label{eq-4.73}
\left|\frac{(a+1) b^2}{2\sqrt{a}\left(a-1\right)\pi^2 m^2 }   +\frac{o\left(1\right)}{m^2 }  \right|=\left|\frac{n}{m} -\sqrt{a}\right|<\frac{1}{m^2 \ln|m|}.
\end{equation}   	
 Since $m\sim \lambda$, then estimation   \eqref{eq-4.73} can be written as   	 \begin{equation*}
\left|\frac{(a+1) b^2}{2\sqrt{a}\left(a-1\right)\pi^2  }   +o(1) \right|=o(1).
\end{equation*}    	
  Equivalently, we have 
  \begin{equation*}
\left|\frac{(a+1) b^2}{2\sqrt{a}\left(a-1\right)\pi^2  } \right|=o(1)
\end{equation*} 
 and this a contradiction.  Therefore, we get $|y_x(1)|=o(1)$ for almost real number $\sqrt{a}$.\\[0.1in]
\noindent Hence, in the three cases, we get \eqref{eq-4.49}. The proof is thus complete.
 \end{proof}
 \noindent \textbf{Proof of Theorem \ref{Theorem-4.5}.} 
 \noindent Let $\ell=5-\alpha$. By tacking $h=1$ in equation \eqref{eq-4.47}, we get 
\begin{equation}\label{eq-4.48}
\left|y_x(1)\right|^2-\left|y_x(0)\right|^2-\left|u_x(0)\right|^2=\frac{o(1)}{\la^{\ell-1+\alpha}}.
\end{equation}
It follows from, Lemma \ref{Theorem-4.7}, that  
 \begin{equation}\label{eq-4.74}
\left|y_x(1)\right|^2=\left|y_x(0)\right|^2=\left|u_x(0)\right|^2=o(1).
\end{equation}
 Finally, by taking $h=x$ in equation \eqref{eq-4.47} and using \eqref{eq-4.74}, we get $\|U\|_{\mathcal{H}}=o(1)$ which contradicts\eqref{eq-4.39}, consequently condition \eqref{(i)} holds. This implies,from Lemma \ref{bt}, the energy decay estimation \eqref{EnergyGeneral}. The proof is thus complete.
\section*{Conclusion}
\noindent We have studied the influence of the coefficients on the indirect stabilization of a system of two wave  equations coupled by velocities, with only one  fractional derivative control. In this work, we consider the Caputo's fractional derivative of order $\alpha\in ]0,1[$ and $\eta\geq 0$. If the wave speeds are equal (i.e., $a=1$), $\eta>0$ and if the coupling parameter $b=k\pi $ (resp. $ b \neq k\pi$), $k\in \Z$ and it is outside a discrete set of exceptional values, a non-uniform stability is expected. Then, using a frequency domain approach combining with a multiplier method, we have proved an optimal polynomial energy decay rate of type ${t^{-\frac{2}{1-\alpha}}}$ (resp. ${t^{-\frac{2}{5-\alpha}}}$). In the general case, when $a\neq 1$ a non uniform stability is expected. Finally, if $\sqrt{a}$ is a rational number or ($a$ is a rational number and $b$ is small enough or for almost irrational number $\sqrt{a}$) and if $b$ is outside another discrete set of exceptional values, using a frequency domain approach, we proved a polynomial energy decay rate of type ${t^{-\frac{2}{5-\alpha}}}$. We conjecture that the remaining cases could be analyzed in the same way with a slower polynomial decay rate.

\end{document}